\def\pdt2{\partial_t^2}
\def\pdx2{\partial_x^2}
\def\ii{\mathrm{i}}
\def\eps{\varepsilon}
\newcommand{\UU}{{\bf U}}
\newcommand{\VV}{{\bf V}}
\newcommand{\HH}{{\bf H}}
\newcommand{\EE}{{\bf E}}
\newcommand{\CU}{{\bf curl}}
\newcommand{\DD}{{\bf D}}
\newtheorem{theo}{Theorem}[section]
\newtheorem{lem}[theo]{Lemma}
\newtheorem{rem}[theo]{Remark}
\newtheorem{defi}[theo]{Definition}
\newtheorem{prop}[theo]{Proposition}
\numberwithin{equation}{section}
\newcommand{\norm}[1]{\left\Vert#1\right\Vert}
\newcommand{\abs}[1]{\left\vert#1\right\vert}
\title[Time  exponential integrator Fourier pseudospectral methods]{Time  exponential integrator Fourier pseudospectral methods with high accuracy and multiple conservation laws for three-dimensional Maxwell's equations}
\author[B. Wang]{Bin Wang}\address{\hspace*{-12pt}B.~Wang: School of Mathematics and Statistics, Xi'an Jiaotong University, 710049 Xi'an, China}
\email{wangbinmaths@xjtu.edu.cn}\urladdr{http://gr.xjtu.edu.cn/web/wangbinmaths/home}
\author[Y. L. Jiang]{Yaolin Jiang}
\address{\hspace*{-12pt}Corresponding author.  Y. L.~Jiang: School of Mathematics and Statistics, Xi'an Jiaotong University, 710049 Xi'an, China}
\email{yljiang@mail.xjtu.edu.cn}
\urladdr{http://gr.xjtu.edu.cn/web/yljiang}
\begin{document}
\maketitle
\dedicatory{}

\begin{abstract}
{Maxwell equations describe the propagation of
electromagnetic waves and are therefore fundamental to understanding
many problems encountered in the study of antennas and
electromagnetics.}
 The aim of this paper is to propose and {analyse} an efficient fully discrete scheme for solving three-dimensional Maxwell's equations.
  This is accomplished by combining time  exponential integrator and Fourier pseudospectral methods.
  Fast computation is implemented in the scheme by using the Fast Fourier Transform algorithm  which is well known in scientific computations.
  An optimal error estimate which is not encumbered by the CFL condition is established and the resulting scheme is proved to be of spectral accuracy
  in space and infinite-order accuracy in time. {Furthermore,} the  scheme is shown to have multiple conservation laws including discrete energy, helicity, momentum,
  symplecticity, and divergence-free field conservations. All the theoretical results of the accuracy and conservations are numerically illustrated by two numerical tests.
 \\
{\bf Keywords:}Maxwell's equations, exponential integrator, Fourier pseudospectral methods, structure-preserving algorithms, convergence \\
{\bf AMS Subject Classification:} 65M12, 65M15
\end{abstract}

 \section{Introduction}
% 方程的重要性和系统介绍
The Maxwell's equations  are the fundamental laws in electromagnetism and they describe the propagation and scattering of electromagnetic
waves.
They play a crucial role in a wide variety of applications in science and engineering such as wireless engineering, antennas, microwave circuits,   photonic cristals, radio-frequency,   aircraft radar, integrated optical circuits, waveguides, and interferometers.
This paper is devoted to the three-dimensional
 Maxwell's equations in an isotropic, homogeneous, and lossless medium which {are expressed in the following coupled form
 (see \cite{Leis86})}
 \begin{subequations}\label{maxsys}
\begin{numcases}%
\,\frac{\partial \HH}{\partial t} =-\frac{1}{\mu}\CU\  \EE,\ \ \ \frac{\partial \EE}{\partial t} =\frac{1}{\eps}\CU\  \HH,\ \ \ \
   \Omega\times[0,t_{end}],\qquad \textbf{(curl-equations)}\label{maxsys a}\\
\nabla\cdot (\eps \EE)= 0,\ \ \ \ \qquad \nabla\cdot (\eps \HH)= 0, \ \ \qquad
  \Omega\times [0,t_{end}],\qquad \textbf{(div-equations)}\label{maxsys b}
\end{numcases}
\end{subequations}
where  $\textbf{H}= (H_x ,H_y ,H_z )^{\intercal}: \Omega\times
[0,t_{end}] \rightarrow \mathbb{R}^3$ {stands for}
magnetic field intensity,
  $\textbf{E}= (E_x ,E_y ,E_z )^{\intercal}: \Omega\times [0,t_{end}]\rightarrow \mathbb{R}^3$ represents { electric field intensity},
 and  $\mu$ and $\eps$ denote  the magnetic permeability and electric permittivity, respectively.
 In this paper, the equations \eqref{maxsys} and their
initial values
\begin{equation}\label{IV}\HH_0(x,y,z)=\HH(x,y,z,0),\qquad { \EE_0(x,y,z)=\EE(x,y,z,0),}\end{equation}
are considered on the cuboid space domain $\Omega=[x_L,x_R]\times [y_L,y_R]\times [z_L,z_R]$ and periodic boundary conditions are required on the domain
$\partial \Omega\times [0,t_{end}].$ It is well known that Maxwell's equations \eqref{maxsys} have unique smooth solutions for all time
if the initial data  \eqref{IV} are suitably smooth \cite{Leis86}. The \textbf{div-equations} \eqref{maxsys b} can be derived from the \textbf{curl-equations} \eqref{maxsys a}
by taking divergence, and  they hold automatically  if $\HH_0$ and $\EE_0$ are divergence-free.

% 已有经典算法介绍
%%%% ENERGY-PRESERVING NEW PAPER
Due to the  great importance and diversity of applications,
Maxwell's equations have been researched for more than 150 years and
there has been a great interest in their numerical analysis in the
last couple of decades. Various numerical methods have been
investigated for the Maxwell's equations. The first kind of scheme
was the finite-difference time domain (FDTD) method which was
firstly proposed by { Yee  in \cite{3}},  and further developed and
{analysed} in \cite{Monk94,n38,n41,n43,n44,n49}. However,
these Yee-based FDTD methods require very small temporal step-size
to ensure their  stability. In order to improve the efficiency, the
alternating direction implicit (ADI) technique was proposed and some
unconditionally stable ADI-FDTD schemes were formulated  {
\cite{6,n25,n22,4,3,5}}. Besides finite-difference space
discretization,  other space discretization techniques have also
become  common such as { discontinuous Galerkin (dG) methods
\cite{Descombes13,Descombes16,n33,Moya12} or {the}
Fourier pseudo-spectral method \cite{n29}}. Concerning the schemes
of the time integration, some  time methods are applied to the
spatially {discretised} Maxwell's equations such as  the
explicit method \cite{Grote10},  Verlet method \cite{Fahs09},
Runge-Kutta (RK) methods { \cite{Hochbruck151,Hochbruck16}},
splitting methods \cite{Hochbruck15,Verwer11},  low-storage RK
schemes \cite{Diehl10}, multiscale methods
\cite{Hochbruck19,Henning16}, and exponential RK { methods
\cite{t13,yang}}.

%Grote10,LubichW

It is noted that the above mentioned publications are devoted to the
accuracy and they do not pay attention to the structure
preservation. In recent years, due to the superior properties in
long time numerical computation over traditional numerical methods,
structure-preserving methods have been proved to be very powerful in
numerical simulations \cite{Lubich}.  For the Maxwell's equations
\eqref{maxsys}, they admit many  physical invariants: energy
conservation laws, symplectic  conservation laws, helicity
conservation laws, momentum conservation laws and divergence-free
fields. These invariants are very important in the long time
propagation of the electromagnetic waves \cite{n44}.

To present these physical invariants, we rewrite the  two \textbf{curl-equations} \eqref{maxsys a} as   a bi-Hamiltonian system  { (\cite{add5})} which  reads $
\frac{\partial}{\partial t}  \begin{pmatrix}
                                \HH   \\
                              \EE
                                 \end{pmatrix}
   = \left(
       \begin{array}{cc}
         \mathbf{0} & - \frac{\CU}{\eps \mu}\\
         \frac{\CU}{\eps \mu} & \mathbf{0} \\
       \end{array}
     \right) \begin{pmatrix}
                                \frac{\delta \mathcal{H}}{\delta  \HH}  \\
                                \frac{\delta \mathcal{H}}{\delta  \EE}
                                 \end{pmatrix}$,
where the quadratic Hamiltonian functional is
$ \mathcal{H}=\frac{1}{2}\int_{\Omega}\big(\eps\abs{\EE}^2+\mu\abs{\HH}^2 \big)dx dy dz$ with  the Euclidean norm $\abs{\cdot}$.
The solutions $\EE,\ \HH$  satisfy  the following energy conservation
laws (ECLs)  for $w = x,y$ or $z$ (\cite{n5,n14})
\begin{equation}\label{energyc}
  \begin{aligned}
& \frac{d }{d t} \mathcal{E}_1^{exact}(t)= 0,\quad \mathcal{E}_1^{exact}(t):=  \mathcal{H}= \frac{1}{2}\int_{\Omega}\big(\eps\abs{\EE}^2+\mu\abs{\HH}^2 \big)dx dy dz,\\
& \frac{d }{d t}\mathcal{E}_2^{exact}(t)= 0,\quad \mathcal{E}_2^{exact}(t):= \frac{1}{2} \int_{\Omega}\big(\eps\abs{\partial_t\EE}^2+\mu\abs{\partial_t \HH}^2 \big)dx dy dz,\\
& \frac{d }{d t} \mathcal{E}_3^{exact}(t)= 0,\quad \mathcal{E}_3^{exact}(t):= \frac{1}{2}\int_{\Omega}\big(\eps\abs{\partial_w\EE}^2+\mu\abs{\partial_w \HH}^2 \big)dx dy dz,\\
& \frac{d }{d t}\mathcal{E}_4^{exact}(t)= 0,\quad \mathcal{E}_4^{exact}(t):= \frac{1}{2} \int_{\Omega}\big(\eps\abs{\partial^2_{tw}\EE}^2+\mu\abs{\partial^2_{tw} \HH}^2 \big)dx dy dz.
  \end{aligned}
\end{equation}

With this Hamiltonian formulation, the Maxwell's equations \eqref{maxsys}
also satisfy the symplectic  conservation law  (\cite{n43})
\begin{equation}\label{symplecticityc}
  \begin{aligned}
& \frac{d }{d t} \int_{\Omega}\big(dE_x \wedge  dH_x + dE_y \wedge dH_y + dE_z \wedge dH_z  \big)dx dy dz= 0,
  \end{aligned}
\end{equation}
 the helicity conservation {laws} (\cite{n6})
\begin{equation}\label{helic}
  \begin{aligned}
 &\frac{d }{d t}\mathcal{H}_1^{exact}(t) = 0,\quad \mathcal{H}_1^{exact}(t):=  \int_{\Omega}\Big(\frac{ \EE ^{\intercal}(\CU\ \EE)}{2\mu} +
 \frac{ \HH ^{\intercal}(\CU\ \HH)}{2\eps}  \Big)dx dy dz,\\
 & \frac{d }{d t}\mathcal{H}_2^{exact}(t) = 0,\quad \mathcal{H}_2^{exact}(t):=   \int_{\Omega}\Big(\frac{(\partial_t \EE) ^{\intercal}(\CU\ \partial_t\EE)}{2\mu} +
 \frac{ (\partial_t\HH) ^{\intercal}(\CU\ \partial_t\HH)}{2\eps}  \Big)dx dy dz,
  \end{aligned}
\end{equation}
and  the momentum conservation laws  (\cite{n6})
\begin{equation}\label{momenc}
  \begin{aligned}
& \frac{d }{d t} \mathcal{M}_1^{exact}(t) = 0,\quad \mathcal{M}_1^{exact}(t):=  \int_{\Omega} (\HH ^{\intercal}  \partial_w \EE) dx dy dz, \quad \ \textmd{where}\ \ w=x,y,z,\\
 & \frac{d }{d t} \mathcal{M}_2^{exact}(t) = 0,\quad \mathcal{M}_2^{exact}(t):=  \int_{\Omega} (\EE ^{\intercal}  \partial_w \HH) dx dy dz, \quad \ \textmd{where}\ \ w=x,y,z.
  \end{aligned}
\end{equation}
%Other conservation laws
%\begin{equation}\label{other c}
%  \begin{aligned}
%& \frac{d }{d t}\mathcal{E}_5^{exact}(t)= 0,\quad \mathcal{E}_5^{exact}(t):= \frac{1}{2} \int_{\Omega}\big(\eps \EE  ^{\intercal} \partial_{w}\EE +\mu   \HH^{\intercal} \partial_{w} \HH  \big)dx dy dz,\\
%& \frac{d }{d t}\mathcal{E}_6^{exact}(t)= 0,\quad \mathcal{E}_6^{exact}(t):= \frac{1}{2} \int_{\Omega}\big(\eps (\partial_t\EE)  ^{\intercal} \partial_{w}(\partial_t\EE)  +\mu   (\partial_t\HH)^{\intercal} \partial_{w} (\partial_t\HH)  \big)dx dy dz.
%  \end{aligned}
%\end{equation}

These physical invariants are important for Maxwell's equations.   Naturally it is
desirable to propose a numerical scheme preserving them in the discrete sense (\cite{Lubich,WW21}). Thus, structure-preserving algorithms which can inherit these original
physical features as much as possible have gained remarkable success in the numerical analysis of
Maxwell's equations.  Concerning the structure-preserving algorithms of
Maxwell's equations, three categories have been received much attention in recent years: sympletic methods, divergence-free methods and energy-preserving methods.

There has been a great interest in solving Maxwell's equations by using symplectic  methods (see, e.g. \cite{n7,n21,n25,n40,n42,n43,n51}), which can  preserve the sympletic conservation law \eqref{symplecticityc} of the equations.
In order to make the numerical solution satisfy the  div-equations \eqref{maxsys b}, divergence-free methods were analysed (see, e.g. \cite{n15,n35}). Another important component of the structure-preserving
methods is the energy-preserving method. A kind of energy-conserved splitting method was proposed in \cite{n13} for two dimensional (2D) Maxwell's equations and in  \cite{n14} for the three dimensional (3D) case. Some energy preserving and unconditionally stable splitting schemes \cite{n5,Johannes19,n25,n22,n28} were further proposed.  However, most of these   energy-conserved
splitting schemes have only second order accuracy in both time and space at most.  In order to improve the accuracy, another energy-conserved scheme with fourth order accuracy in time was given in \cite{n6}.

There is no doubt that the idea to make use of structure-preserving
algorithms for Maxwell's equations {is} by no means new,
but there are still many key issues in such kind of algorithms which
remain to be well researched.
 In this paper, a kind of scheme with  high accuracy, low cost and multiple conservation laws is derived {and analised} for Maxwell's equations.
The main contributions of this paper are as follows:

a) Among all the existing structure-preserving methods for Maxwell's
equations, explicit methods usually suffer from step size
restrictions due to stability requirements (CFL condition) and
implicit methods can use larger time steps at the cost of solving
linear or even nonlinear systems. Meanwhile, all the methods have
limited accuracy (up to order four) in time. In this paper, we will
formulate a kind of explicit fully discrete scheme for
\eqref{maxsys} which is not  encumbered by the CFL condition (any
large time {step-size} is acceptable) and has spectral
accuracy in space and infinite-order accuracy in time.

b) Usually the structure-preserving algorithms of
Maxwell's equations can preserve some physical invariants. The new scheme proposed in this paper can simultaneously preserve the energy, symplecticity, helicity, momentum and divergence-free fields  conservation
laws.

c) For all the energy-preserving methods of Maxwell's equations,
they are implicit and the iterative procedure is needed. However,
the fully discrete scheme {proposed in} this paper is
explicit and can use any large time {step-size,} and thus
its computational cost is very low. Meanwhile, the scheme can be
implemented by using the matrix {diagonalisation} method
and Fast Fourier Transform algorithm which are very efficient in
scientific computing.

d) The proposed scheme needs the   regularity $
C^1(0,t_{end};[H^r_p(\Omega)]^3)$ with $r>3/2$ of $\HH$ and $\EE$ to
get the spectral accuracy in space and infinite-order accuracy in
time, which  is lower than those methods (spectral accuracy in space
and second or fourth-order accuracy in time) required {in
the publications (see, e.g. \cite{n5,n6,n7}).}

The rest of the paper is {organised} as follows. In
Section \ref{sec:2}, we present the formulation of the fully
discrete scheme and discuss its cost. The convergent analysis is
made in Section \ref{sec:3}.  Section \ref{sec:4}
{presents} the conservative properties of the proposed
scheme. Two numerical experiments are displayed in
 Section \ref{sec:5}  and the results demonstrate the high accuracy and exact conservation laws of the proposed scheme.

\section{Description of the fully discrete scheme}\label{sec:2}
  %\subsection{Well-posedness of Maxwell's equations \eqref{maxsys}}
   \subsection{Analytic framework}
%  We begin this subsection by presenting some notations. For a set $K\subset \Omega$  and the vectors fields
%  $\UU, \widetilde{\UU}, \VV, \widetilde{\VV}:K\rightarrow \mathbb{R}^3$ we denote the $L^2(K)$-inner product by
%$\langle \UU,\VV\rangle_{K}= \int_{K} \UU \cdot \VV dx dy dz$ and for $F\subset \partial K$ we write
%$\langle \UU,\VV\rangle_{F}= \int_{F} \UU|_F \cdot \VV|_F d\sigma$. In the following we denote by $C^k(\Omega)$ the space of
%$k$ times differentiable functions in $\Omega$ and by $C^k(\overline{\Omega})$ the space of $k$ times differentiable functions
%in  $\Omega\cup \partial\Omega $. Furthermore, we write
% It is well known that Maxwell's equations \eqref{maxsys} have unique smooth solutions for all time
%if the initial data \eqref{IV} are suitably smooth \cite{Leis86}.  If we consider the Maxwell's equations \eqref{maxsys} in the state space $L_2(\Omega)^6$, this requires to clarify
%what we mean by writing $\CU \UU$, since in general functions $ \UU \in L_2(\Omega)^3$ are not differentiable
%(and thus do not possess a “classical $\CU$”).

 We begin this subsection with presenting some notations.  For a set $K\subset \Omega$  and the vectors fields
  $\UU, \widetilde{\UU}, \VV, \widetilde{\VV}:K\rightarrow \mathbb{R}^3$   the $L^2(K)$-inner product is denoted by
$\langle \UU,\widetilde{\UU}\rangle_{K}= \int_{K} \UU \cdot \widetilde{\UU} dx dy dz$ and for $F\subset \partial K$ we denote
$\langle \UU,\widetilde{\UU}\rangle_{F}= \int_{F} \UU|_F \cdot \widetilde{\UU}|_F d\sigma$.
Denoting by $\mathbf{u} = (\UU,\VV)$ and $\widehat{\mathbf{u}} = (\widehat{\UU},\widehat{\VV})$,  the weighted inner products are given by
$$\langle \UU,\widetilde{\UU}\rangle_{\alpha,K}=\langle \alpha\UU,\widetilde{\UU}\rangle_{K},\quad
\langle \mathbf{u},\widetilde{\mathbf{u}}\rangle_{\alpha\times \beta,K}=\langle \UU,\widetilde{\UU}\rangle_{\alpha,K}+
\langle \VV,\widetilde{\VV}\rangle_{\beta,K}$$
for the positive weight functions $\alpha,\beta:
\Omega\rightarrow \mathbb{R}^+ $. The corresponding norms are immediately obtained
as $\norm{\UU}^2_{\alpha}=\langle \UU,\UU\rangle_{\alpha,K}$ and $\norm{\mathbf{u}}_{\alpha\times \beta}^2=\norm{\UU}^2_{\alpha}+\norm{\VV}^2_{\beta}$.

   The  two \textbf{curl-equations} \eqref{maxsys a}  can be written as  an abstract Cauchy
problem
\begin{equation}\label{Cauchyform}
\frac{\partial}{\partial t}  \begin{pmatrix}
                               \sqrt{\mu} \HH   \\
                             \sqrt{\eps} \EE
                                 \end{pmatrix}
   =\mathcal{C} \begin{pmatrix}
                               \sqrt{\mu} \HH   \\
                             \sqrt{\eps} \EE
                                 \end{pmatrix},
\end{equation}
 where $\mathcal{C}= \left(
       \begin{array}{cc}
         \mathbf{0} & - \mathcal{C}_{\EE}\\
         \mathcal{C}_{\HH} & \mathbf{0} \\
       \end{array}
     \right)=\left(
       \begin{array}{cc}
         \mathbf{0} & - \frac{\CU}{\sqrt{\mu\eps}}\\
         \frac{\CU}{\sqrt{\mu\eps}} & \mathbf{0} \\
       \end{array}
     \right)$ is the Maxwell operator.   For the $\CU$ operator,
its graph space  is given by
$H(\CU,\Omega)=\{\UU\in L^2(\Omega)^3\mid \CU \UU\in L^2(\Omega)^3\},$
which is endowed   with the inner product
$\langle \UU,\VV\rangle_{H(\CU,\Omega)}=\langle \UU,\VV\rangle_{\Omega}+
\langle\CU  \UU,\CU  \VV\rangle_{\Omega}$
for all $\UU,\VV \in H(\CU,\Omega)$
and the associated norm given by $\norm{\UU}^2_{H(\CU,\Omega)}=\langle \UU,\UU\rangle_{H(\CU,\Omega)}$.
Denote  $H_0(\CU,\Omega)$  the closure of $C^{\infty}_0(\Omega)^3=\{v \in C^{\infty}(\Omega) | \textmd{supp}(v) \subset \Omega\ \textmd{is\ compact}\}^3$ with respect to the
norm $\norm{\cdot}_{H(\CU,\Omega)}$.
With these notations, it is well known that the Maxwell operator $\mathcal{C}$ is skew-adjoint w.r.t.
$\langle \cdot,\cdot \rangle_{\alpha\times \beta,\Omega}$ (\cite{Hochbruck15}), i.e.
$\langle \mathcal{C}_{\HH}\HH,\EE \rangle_{\eps,\Omega}=\langle\HH, \mathcal{C}_{\EE}\EE \rangle_{\mu,\Omega}$ for $ \HH\in D(\mathcal{C}_{\HH})$ and $\EE\in D(\mathcal{C}_{\EE}).$
Meanwhile, the  Maxwell operator $\mathcal{C}$ with domain
$D(\mathcal{C}) = D(\mathcal{C}_{\EE}) \times D(\mathcal{C}_{\HH} )  = H(\CU,\Omega) \times H_0(\CU,\Omega) $
  generates a unitary $C_0$-group $e^{t\mathcal{C}}$ (\cite{Hochbruck15}) { on a Hilbert space $X$}.

%
%For the boundary condition we recall once more standard Sobolev spaces, where the space
%H 1
%0 (?) is defined as the closure of C ∞ 0
%(?) with respect to the H 1 (?)-norm. This motivates the
%following definition
%We abbreviate
%and k·k = k·k ? and abbreviate the weighted inner products and norms analogously.
% k·k = k·k ? and abbreviate the weighted inner products and norms analogously.

Based on the above results,  the well-posedness of Maxwell's equations can be derived by  Stone's theorem.
  If the initial value $(\HH_0, \EE_0)\in D(\mathcal{C})$, the two curl-equations \eqref{maxsys a}  have a unique solution
  in $C^1(0,t_{end};{ X}) \cap C (0,t_{end};D(\mathcal{C}))$ (\cite{Hochbruck15}) which can be
  given by
$ \begin{pmatrix}
                               \sqrt{\mu} \HH(t)   \\
                             \sqrt{\eps} \EE(t)
                                 \end{pmatrix}
   =e^{t\mathcal{C}} \begin{pmatrix}
                               \sqrt{\mu} \HH_0   \\
                             \sqrt{\eps} \EE_0
                                 \end{pmatrix}.$
                                According to this formula { and the
unitarity of  $e^{t\mathcal{C}}$, the solution is bounded
{by}
                                   \begin{equation}\label{bound solu} \norm{(\HH,\EE)}_{\mu\times \eps,\Omega}\leq \norm{(\HH_0,\EE_0)}_{\mu\times \eps,\Omega}.\end{equation}}
If these solutions $\HH,\EE$ are required to be smooth  enough and   the initial values  \eqref{IV} satisfy
the two \textbf{div-equations} \eqref{maxsys b}, i.e.
$\nabla\cdot (\eps \EE_0)= 0,\ \nabla\cdot (\eps \HH_0)= 0,$
then the two div-equations \eqref{maxsys b} hold true for $\HH,\EE$ at any $t\geq 0.$
% It is possible to incorporate the divergence conditions and the boundary condition on the magnetic field into the domain of the Maxwell operator $\mathcal{C}$. This enables
%to prove a well-posedness result  for the whole Maxwell system \cite{Hochbruck15}.
In the rest parts of this section, the numerical scheme will be
derived for the abstract Cauchy problem \eqref{Cauchyform}.
{Then,} in the next section we will show the fact that
the proposed numerical solution  satisfies the  two
\textbf{div-equations} \eqref{maxsys b}.
 \subsection{Spatial {discretisation}}
 To achieve high order accuracy in treating the space,
the Fourier pseudo-spectral method is a very good
{discretisation. With this we are hopeful of obtaining}
high order accuracy and be implemented  by the fast Fourier
transform (FFT) algorithm \cite{n29,Shen,Trefethen}.

For the three-dimensional domain $\Omega=[x_L,x_R]\times
[y_L,y_R]\times [z_L,z_R]$, define a series of collocation points
$x_j =x_L + (j-1)h_x,\ y_k =y_L + (k-1)h_y,\ z_l =z_L + (l-1)h_z,$ $
j= 1,2,\ldots,N_x,\ k= 1,2,\ldots,N_y,\ l= 1,2,\ldots,N_z$, where
$h_x= (x_R-x_L )/N_x,\ h_y= (y_R-y_L )/N_y,\ h_z= (z_R-z_L )/N_z$
with even integers $N_x, N_y,$ and $N_z$. Denote the time
{step-size} by $\Delta t = t_{end}/N_t$ { for some
integer $N_t$}
  and let $t_n=n \Delta t$. The value of the function $E(x,y,z,t)$ at the
node $(x_j ,y_k ,z_l ,t_n )$ is denoted by $E^n_{j,k,l}$.

Denote a three-dimensional smooth function defined on $\Omega\times [0,t_{end}]$ by $U(x,y,z,t)$.
The interpolation space of this function is considered as
{ $$\mathcal{S}_{N_S}=\textmd{span}\{g_j(x)g_k(y)g_l(z),\ \ j= 1,2,\ldots,N_x,\ k= 1,2,\ldots,N_y,\ l= 1,2,\ldots,N_z\},$$}
where $N_S=N_xN_yN_z$ and $g_j(x), g_k(y), g_l(z)$ are trigonometric polynomials of degree $N_x/2,N_y/2,N_z/2$, given respectively by {
\begin{equation*}
  \begin{aligned}
& g_j(x)=\frac{1}{N_x}\sum\limits_{\abs{m}\leq N_x/2}^{'}e^{\ii m\nu_x(x-x_j)},\ \
 g_k(y)=\frac{1}{N_y}\sum\limits_{\abs{m}\leq N_y/2}^{'}e^{\ii m\nu_y(y-y_k)},\ \
 g_l(z)=\frac{1}{N_z}\sum\limits_{\abs{m}\leq N_z/2}^{'}e^{\ii m\nu_z(z-z_l)}.
  \end{aligned}
\end{equation*}}
Here { $\ii=\sqrt{-1}$ and} the prime indicates that
the first and last terms in the summation are taken with the factor
$1/2$, and $\nu_w=\frac{2\pi}{w_R-w_L}$ for $w=x,y,z.$
 Interpolating
$U(x,y,z,t)$ at collocation points $(x_j ,y_k ,z_l)$ gives $$
U(x,y,z,t)\approx  \mathcal{I}_{N_S}U(x,y,z,t)=\sum\limits_{j=1}^{N_x}\sum\limits_{k=1}^{N_y}\sum\limits_{l=1}^{N_z}
 U_{j,k,l}(t)g_j(x)g_k(y)g_l(z),$$
where $ U_{j,k,l}(t)= U(x_j ,y_k ,z_l,t)$ and its vector form is denoted by
$$\textbf{U}=(U_{1,1,1}, U_{2,1,1},\ldots, U_{N_x,1,1},U_{1,2,1}, U_{2,2,1},\ldots, U_{N_x,2,1},\ldots,
U_{1,N_y,N_z}, U_{2,N_y,N_z},\ldots, U_{N_x,N_y,N_z}) ^{\intercal}.$$

 Consider the
following   trigonometric polynomials as an approximation
for the solution of  \eqref{Cauchyform}
\begin{equation}\label{inter EH}
  \begin{aligned}
 \mathcal{I}_{N_S}\EE^{\mathcal{F}}= (\mathcal{I}_{N_S}E^{\mathcal{F}}_x, \mathcal{I}_{N_S}E^{\mathcal{F}}_y,\mathcal{I}_{N_S}E^{\mathcal{F}}_z) ^{\intercal},\ \ \
  \mathcal{I}_{N_S}\HH^{\mathcal{F}}= (\mathcal{I}_{N_S}H^{\mathcal{F}}_x, \mathcal{I}_{N_S}H^{\mathcal{F}}_y,\mathcal{I}_{N_S}H^{\mathcal{F}}_z) ^{\intercal},
  \end{aligned}
\end{equation}
which is required to satisfy $
\frac{\partial}{\partial t}  \begin{pmatrix}
                               \sqrt{\mu} \mathcal{I}_{N_S}\HH^{\mathcal{F}}   \\
                             \sqrt{\eps} \mathcal{I}_{N_S}\EE^{\mathcal{F}}
                                 \end{pmatrix}
   = \mathcal{C} \begin{pmatrix}
                               \sqrt{\mu} \mathcal{I}_{N_S}\HH^{\mathcal{F}}   \\
                             \sqrt{\eps} \mathcal{I}_{N_S}\EE^{\mathcal{F}}
                                 \end{pmatrix}.$
In order to calculate $
\CU  (\mathcal{I}_{N_S} \EE^{\mathcal{F}})%=\begin{pmatrix}
%                                \frac{\partial \mathcal{I}_{N_S}E^{\mathcal{F}}_z}{\partial y}- \frac{\partial \mathcal{I}_{N_S}E^{\mathcal{F}}_y}{\partial z}  \\
%                             \frac{\partial \mathcal{I}_{N_S}E^{\mathcal{F}}_x}{\partial z}- \frac{\partial \mathcal{I}_{N_S}E^{\mathcal{F}}_z}{\partial x}  \\
%                             \frac{\partial \mathcal{I}_{N_S}E^{\mathcal{F}}_y}{\partial x}- \frac{\partial \mathcal{I}_{N_S}E^{\mathcal{F}}_x}{\partial y}
%                                 \end{pmatrix}
,$ we make partial differential with respect to $x$ and evaluate the
resulting expression at collocation points $(x_j ,y_k ,z_l)$:
{\begin{equation*}
  \begin{aligned}
\frac{\partial }{\partial x}\mathcal{I}_{N_S}E^{\mathcal{F}}_x (x_j ,y_k ,z_l)=&\sum\limits_{j'=1}^{N_x}\sum\limits_{k'=1}^{N_y}\sum\limits_{l'=1}^{N_z}
 E^{\mathcal{F}}_{x,j',k',l'}(t)\frac{d}{d
 x}g_{j'}(x_j)g_{k'}(y_k)g_{l'}(z_l)\\
 =&\Big[\big(I_{N_z} \otimes I_{N_y} \otimes D_x\big)\EE^{\mathcal{F}}_{x}\Big]_{N_xN_y(l-1)+N_x(k-1)+j},
  \end{aligned}
\end{equation*}}
where $\otimes$ is the Kronecker product, $I_{N_w}$ is the identity matrix of dimension $N_w \times N_w$,
$$\EE^{\mathcal{F}}_{x}=(E^{\mathcal{F}}_{x,1,1,1}, \ldots, E^{\mathcal{F}}_{x,N_x,1,1},E^{\mathcal{F}}_{x,1,2,1}, \ldots, E^{\mathcal{F}}_{x,N_x,2,1},\ldots,
E^{\mathcal{F}}_{x,1,N_y,N_z},  \ldots, E^{\mathcal{F}}_{x,N_x,N_y,N_z}) ^{\intercal},$$ and the spectral differential matrix is explicitly given by $(D_w)_{j,l}=  \frac{1}{2}\nu_w(-1)^{j+l}\cot(\nu_w (w_j-w_l)/2)$ for $ j\neq l$ and others elements are zero with $j,l=1,2,\ldots,N_w$ and $w=x,y,z.$
Similarly, one has
$
 \frac{\partial }{\partial y}\mathcal{I}_{N_S}E^{\mathcal{F}}_x (x_j ,y_k ,z_l)
 =\Big[\big(I_{N_z} \otimes D_y \otimes I_{N_x}\big)\EE^{\mathcal{F}}_{x}\Big]_{N_xN_y(l-1)+N_x(k-1)+j}$ and $
  \frac{\partial }{\partial z}\mathcal{I}_{N_S}E^{\mathcal{F}}_x (x_j ,y_k ,z_l)
 =\Big[\big(D_z  \otimes I_{N_y} \otimes I_{N_x} \big)\EE^{\mathcal{F}}_{x}\Big]_{N_xN_y(l-1)+N_x(k-1)+j}.
$

Based on the above results and with the notations
$\widetilde{\EE}  = (\EE^{\mathcal{F}}_{x}, \EE^{\mathcal{F}}_y,\EE^{\mathcal{F}}_z) ^{\intercal},\
   \widetilde{\HH}= (\HH^{\mathcal{F}}_x, \HH^{\mathcal{F}}_y,\HH^{\mathcal{F}}_z) ^{\intercal},$
  the following ordinary differential equations are obtained
\begin{equation}\label{maxreformFD}
\frac{d}{d t}  \begin{pmatrix}
                               \sqrt{\mu}\widetilde{\HH}   \\
                             \sqrt{\eps}\widetilde{\EE}
                                 \end{pmatrix}
   = \frac{1}{\sqrt{\mu\eps}}\left(
       \begin{array}{cc}
         \mathbf{0} & - \DD\\
         \DD & \mathbf{0} \\
       \end{array}
     \right) \begin{pmatrix}
                               \sqrt{\mu} \widetilde{\HH}   \\
                             \sqrt{\eps}\widetilde{\EE}
                                 \end{pmatrix},
\end{equation}
where
$$\DD=\left(
        \begin{array}{ccc}
          \textbf{0} & -D_z  \otimes I_{N_y} \otimes I_{N_x} & I_{N_z}  \otimes  D_y  \otimes  I_{N_x}\\
         D_z  \otimes I_{N_y} \otimes I_{N_x} & \textbf{0} & -I_{N_z}  \otimes I_{N_y} \otimes  D_x\\
          -I_{N_z}  \otimes  D_y  \otimes  I_{N_x}  &I_{N_z}  \otimes I_{N_y} \otimes  D_x & \textbf{0} \\
        \end{array}
      \right).
$$
The initial values of \eqref{maxreformFD} are obtained by  considering the initial values \eqref{IV} at   the  collocation points.

 \subsection{Time {discretisation}}
 In this part, we formulate the time {discretisation} of the system  \eqref{maxreformFD} by using exponential integrators. This kind of method has been well researched and
 has been made successful {applications}  in  many systems {(see, e.g. \cite{LubichW,Hochbruck10,WZ})}. However, exponential integrators suffer from the fact that  direct
 {discretisation} of
the three-dimensional Maxwell's equations  on a grid requires the
storage and computation of a { $(6N_xN_yN_z)\times (6N_xN_yN_z)$}
matrix exponential, which is prohibitive from a computational point
of view in many cases.
 In order to make the computation of the matrix exponential  be achievable and effective,  matrix {diagonalisation} method and vector-valued trigonometric functions are employed in this section.

Denote $\mathcal{F}_{N_w}$
  the matrix of DFT coefficients with entries given by $(\mathcal{F}_{N_w})_{j,k}=(e^{2\pi { \ii}/N_w})^{-jk}$ and
$(\mathcal{F}^{-1}_{N_w})_{j,k}=\frac{1}{N_w}(e^{2\pi { \ii}/N_w})^{jk}$. Then it is known that $\DD=\mathcal{F}^{-1}\Lambda \mathcal{F}$
with
\begin{equation*}
  \begin{aligned}
 & \mathcal{F}=\textmd{diag}\Big(
          \mathcal{F}_{N_z}  \otimes \mathcal{F}_{N_y} \otimes \mathcal{F}_{N_x},\mathcal{F}_{N_z}  \otimes \mathcal{F}_{N_y} \otimes \mathcal{F}_{N_x} ,\mathcal{F}_{N_z}  \otimes \mathcal{F}_{N_y} \otimes \mathcal{F}_{N_x}\Big),\\
&\Lambda=\left(
        \begin{array}{ccc}
          \textbf{0} & -\Lambda_z  \otimes I_{N_y} \otimes I_{N_x} & I_{N_z}  \otimes  \Lambda_y  \otimes  I_{N_x}\\
         \Lambda_z  \otimes I_{N_y} \otimes I_{N_x} & \textbf{0} & -I_{N_z}  \otimes I_{N_y} \otimes \Lambda_x\\
          -I_{N_z}  \otimes \Lambda_y  \otimes  I_{N_x}  &I_{N_z}  \otimes I_{N_y} \otimes \Lambda_x & \textbf{0} \\
        \end{array}
      \right),\\
      &\Lambda_w={ \ii}\Omega_w:={ \ii}\nu_w\textmd{diag}\Big(0,1,\ldots,\frac{N_w}{2}-1,0,{ -\frac{N_w}{2}+1},\ldots,-2,-1\Big)\ \ \ \textmd{for }\ w=x,y,z.
  \end{aligned}
\end{equation*}
Then letting
$
 \widehat{\EE}= \mathcal{F} \widetilde{\EE},\   \widehat{\HH}= \mathcal{F} \widetilde{\HH},$
 \eqref{maxreformFD} can be reformulated as
 \begin{equation}\label{maxreformFD1}
\frac{d}{d t}  \begin{pmatrix}
                               \sqrt{\mu} \widehat{\HH}   \\
                             \sqrt{\eps} \widehat{\EE}
                                 \end{pmatrix}
   =  \widehat{\Lambda} \begin{pmatrix}
                               \sqrt{\mu}  \widehat{\HH}   \\
                             \sqrt{\eps} \widehat{\EE}
                                 \end{pmatrix},
\end{equation}
{where} $\widehat{\Lambda}=\frac{1}{\sqrt{\mu\eps}}\left(
       \begin{array}{cc}
         \mathbf{0} & - \Lambda\\
         \Lambda & \mathbf{0} \\
       \end{array}
     \right)$.
 The exact solution of  \eqref{maxreformFD1} is
$
  \begin{pmatrix}
                               \sqrt{\mu}\widehat{\HH}(t)   \\
                             \sqrt{\eps}\widehat{\EE}(t)
                                 \end{pmatrix}
   = e^{t\widehat{\Lambda}}  \begin{pmatrix}
                               \sqrt{\mu} \widehat{\HH}(0)   \\
                             \sqrt{\eps}\widehat{\EE}(0)
                                 \end{pmatrix}.$
Here  $e^{t\widehat{\Lambda}}$ is the exponential of the matrix $t\widehat{\Lambda}$. This exponential itself is again a $6N_S\times 6N_S$
matrix, or in other words a linear operator from { $\mathbb{C}^{6N_S}$ to $\mathbb{C}^{6N_S}$}. Even more, it is easy to see that
$e^{t\widehat{\Lambda}}$ with $t\geq 0$ is a strongly continuous semigroup.  It is clear that $\widehat{\Lambda}$ is a skew-hermitian matrix. Thus the matrix exponential $e^{t\widehat{\Lambda}}$  is unitary and thus
satisfies { $\norm{e^{t\widehat{\Lambda}} }=1$}.

Now the key step is to compute  the  matrix exponential  $e^{t\widehat{\Lambda}}$. The results are given by the following two propositions.
\begin{prop}\label{pro1}
For the  matrix exponential, one has  $e^{t\widehat{\Lambda}}=\left(
                \begin{array}{cc}
                  \cos\big(\frac{t}{\sqrt{\mu\eps}}\Lambda\big) & -\sin\big(\frac{t}{\sqrt{\mu\eps}}\Lambda\big) \\
                  \sin\big(\frac{t}{\sqrt{\mu\eps}}\Lambda\big) & \cos\big(\frac{t}{\sqrt{\mu\eps}}\Lambda\big) \\
                \end{array}
              \right).$
\end{prop}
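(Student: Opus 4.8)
The plan is to treat $\widehat{\Lambda}$ as a block analogue of the scalar rotation generator $\left(\begin{smallmatrix} 0 & -1 \\ 1 & 0\end{smallmatrix}\right)$ and to evaluate the defining power series $e^{t\widehat{\Lambda}}=\sum_{n\ge 0}\frac{t^n}{n!}\widehat{\Lambda}^n$ directly. Writing $A=\frac{1}{\sqrt{\mu\eps}}\Lambda$ so that $\widehat{\Lambda}=\left(\begin{smallmatrix} \mathbf{0} & -A \\ A & \mathbf{0}\end{smallmatrix}\right)$, I would first compute $\widehat{\Lambda}^2$. A direct block multiplication gives $\widehat{\Lambda}^2=-\left(\begin{smallmatrix} A^2 & \mathbf{0} \\ \mathbf{0} & A^2\end{smallmatrix}\right)$, i.e.\ a block-diagonal matrix; this is the crucial structural fact that renders all higher powers explicit.

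Next I would establish by induction the closed forms
\[
\widehat{\Lambda}^{2m}=(-1)^m\begin{pmatrix} A^{2m} & \mathbf{0} \\ \mathbf{0} & A^{2m}\end{pmatrix},\qquad
\widehat{\Lambda}^{2m+1}=(-1)^m\begin{pmatrix} \mathbf{0} & -A^{2m+1} \\ A^{2m+1} & \mathbf{0}\end{pmatrix},\qquad m\ge 0.
\]
Substituting these into the exponential series and splitting the sum into even and odd indices, the diagonal blocks collect into $\sum_m \frac{(-1)^m t^{2m}}{(2m)!}A^{2m}=\cos(tA)$ while the off-diagonal blocks collect into $\pm\sum_m \frac{(-1)^m t^{2m+1}}{(2m+1)!}A^{2m+1}=\pm\sin(tA)$, where $\cos$ and $\sin$ of a matrix are understood through their (absolutely convergent) power series. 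Recalling $A=\frac{1}{\sqrt{\mu\eps}}\Lambda$ and reassembling the four blocks yields exactly the claimed matrix.

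Alternatively, and perhaps more transparently, one may verify the claim through the matrix ODE characterisation: denoting by $M(t)$ the matrix on the right-hand side, I would check $M(0)=I$ and then use $\frac{d}{dt}\cos(tA)=-A\sin(tA)$ and $\frac{d}{dt}\sin(tA)=A\cos(tA)$, together with the commutativity of $A$ with $\cos(tA)$ and $\sin(tA)$, to confirm $M'(t)=\widehat{\Lambda}M(t)$. Uniqueness of the solution of this linear initial value problem then identifies $M(t)=e^{t\widehat{\Lambda}}$.

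There is no serious obstacle here: the computation becomes routine once the block identity $\widehat{\Lambda}^2=-\,\mathrm{diag}(A^2,A^2)$ is in hand. The only points requiring a little care are keeping the signs consistent across the even/odd split and noting that $A$ commutes with every power of itself, so that $\cos(tA)$ and $\sin(tA)$ behave precisely as in the scalar case and the block structure of $e^{t\widehat{\Lambda}}$ mirrors that of a planar rotation.
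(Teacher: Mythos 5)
Your proof is correct and follows essentially the same route as the paper: both evaluate the power series $e^{t\widehat{\Lambda}}=\sum_k \frac{t^k}{k!}\widehat{\Lambda}^k$ using the explicit block structure of the powers of $\widehat{\Lambda}$, the only cosmetic difference being that you organise the powers via the identity $\widehat{\Lambda}^2=-\,\mathrm{diag}(A^2,A^2)$ and an even/odd split, whereas the paper lists the four residue classes of $k$ modulo $4$. Your even/odd organisation (and the optional ODE-uniqueness check) is a slightly tidier write-up of the same computation, so no substantive comparison is needed.
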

\begin{proof}
 It is easy to see that $\widehat{\Lambda}^k=\frac{1}{(\sqrt{\mu\eps})^k}\left(
       \begin{array}{cc}
         \mathbf{0} & - \Lambda^k\\
         \Lambda^k & \mathbf{0} \\
       \end{array}
     \right)$ for $k=4m+1$, $\frac{1}{(\sqrt{\mu\eps})^k}\left(
       \begin{array}{cc}
         - \Lambda^k &   \mathbf{0} \\
         \mathbf{0} & - \Lambda^k \\
       \end{array}
     \right)$ for $k=4m+2$, $\frac{1}{(\sqrt{\mu\eps})^k}\left(
       \begin{array}{cc}
         \mathbf{0} & \Lambda^k\\
         -\Lambda^k & \mathbf{0} \\
       \end{array}
     \right)$ for $k=4m+3$, and $\frac{1}{(\sqrt{\mu\eps})^k}\left(
       \begin{array}{cc}
         \Lambda^k &   \mathbf{0} \\
         \mathbf{0} & \Lambda^k \\
       \end{array}
     \right)$ for $k=4m$.
     %
%
%      $$
% \begin{array}[c]{ll}
%\widehat{\Lambda}^k=\left\{
%  \begin{aligned}
%  &\frac{1}{(\sqrt{\mu\eps})^k}\left(
%       \begin{array}{cc}
%         \mathbf{0} & - \Lambda^k\\
%         \Lambda^k & \mathbf{0} \\
%       \end{array}
%     \right),\quad k=4m+1,\\
%  &\frac{1}{(\sqrt{\mu\eps})^k}\left(
%       \begin{array}{cc}
%         - \Lambda^k &   \mathbf{0} \\
%         \mathbf{0} & - \Lambda^k \\
%       \end{array}
%     \right),\quad k=4m+2,\\
%      &\frac{1}{(\sqrt{\mu\eps})^k}\left(
%       \begin{array}{cc}
%         \mathbf{0} & \Lambda^k\\
%         -\Lambda^k & \mathbf{0} \\
%       \end{array}
%     \right),\quad k=4m+3,\\
%  &\frac{1}{(\sqrt{\mu\eps})^k}\left(
%       \begin{array}{cc}
%         \Lambda^k &   \mathbf{0} \\
%         \mathbf{0} & \Lambda^k \\
%       \end{array}
%     \right),\quad k=4m.
%  \end{aligned}\right.
%\end{array}$$
Then one gets \begin{equation*}
  \begin{aligned} e^{t\widehat{\Lambda}}=&\sum_{k=0}^{\infty}\frac{1}{k!}\widehat{\Lambda}^k
%  =\sum_{m=0}^{\infty}\sum_{l=0}^{3}\frac{1}{(4m+l)!}\widehat{\Lambda}^{4m+l}\\
  =\sum_{m=0}^{\infty} \left(
       \begin{array}{cc}
      \frac{ \Lambda^{4m}}{(4m)! (\sqrt{\mu\eps})^{4m}}-\frac{ \Lambda^{4m+2}}{(4m+2)! (\sqrt{\mu\eps})^{4m+2}}   &   -\frac{ \Lambda^{4m+1}}{(4m+1)! (\sqrt{\mu\eps})^{4m+1}} +\frac{ \Lambda^{4m+3}}{(4m+3)! (\sqrt{\mu\eps})^{4m+3}}   \\
        \frac{ \Lambda^{4m+1}}{(4m+1)! (\sqrt{\mu\eps})^{4m+1}} -\frac{ \Lambda^{4m+3}}{(4m+3)! (\sqrt{\mu\eps})^{4m+3}}  & \frac{ \Lambda^{4m}}{(4m)! (\sqrt{\mu\eps})^{4m}}-\frac{ \Lambda^{4m+2}}{(4m+2)! (\sqrt{\mu\eps})^{4m+2}} \\
       \end{array}
     \right),
%   \\  =&\left(
%                \begin{array}{cc}
%                  \cos\big(\frac{t}{\sqrt{\mu\eps}}\Lambda\big) & -\sin\big(\frac{t}{\sqrt{\mu\eps}}\Lambda\big) \\
%                  \sin\big(\frac{t}{\sqrt{\mu\eps}}\Lambda\big) & \cos\big(\frac{t}{\sqrt{\mu\eps}}\Lambda\big) \\
%                \end{array}
%              \right).
  \end{aligned} \end{equation*}
  which {confirms} the result. \hfill
 \end{proof}

 \begin{prop} \label{pro2}
It is deduced that
\begin{equation*}
  \begin{aligned}&\cos(t\Lambda/\sqrt{\mu\eps})=\left(
                                                   \begin{array}{ccc}
                                                     \textbf{c}_{11}(t\Lambda/\sqrt{\mu\eps}) & \textbf{c}_{12} (t\Lambda/\sqrt{\mu\eps}) & \textbf{c}_{13}(t\Lambda/\sqrt{\mu\eps})  \\
                                                    \textbf{c}_{12}(t\Lambda/\sqrt{\mu\eps})  & \textbf{c}_{22}(t\Lambda/\sqrt{\mu\eps})  & \textbf{c}_{23} (t\Lambda/\sqrt{\mu\eps}) \\
                                                    \textbf{c}_{13} (t\Lambda/\sqrt{\mu\eps}) & \textbf{c}_{23}(t\Lambda/\sqrt{\mu\eps})  & \textbf{c}_{33}(t\Lambda/\sqrt{\mu\eps})  \\
                                                   \end{array}
                                                 \right),\\ &\sin(t\Lambda/\sqrt{\mu\eps})=\left(
                                                   \begin{array}{ccc}
                                                     \textbf{0}  & -\textbf{s}_{12}(t\Lambda/\sqrt{\mu\eps})  & \textbf{s}_{13}(t\Lambda/\sqrt{\mu\eps})  \\
                                                    \textbf{s}_{12}(t\Lambda/\sqrt{\mu\eps})  & \textbf{0} &- \textbf{s}_{23}(t\Lambda/\sqrt{\mu\eps})  \\
                                                    -\textbf{s}_{13} (t\Lambda/\sqrt{\mu\eps}) & \textbf{s}_{23}(t\Lambda/\sqrt{\mu\eps})  & \textbf{0} \\
                                                   \end{array}
                                                 \right),
  \end{aligned} \end{equation*}
where
\begin{equation}\label{matrixs}
  \begin{aligned}
 &\textbf{c}_{11}(t\Lambda)=I+\frac{t^2(\Omega_3^2+\Omega_2^2)(\cosh(\sqrt{-\Psi(t\Omega)})-I)}{\Psi(t\Omega)}, \quad \textbf{c}_{12}(t\Lambda)=-\frac{t^2\Omega_2 \Omega_1 (\cosh(\sqrt{-\Psi(t\Omega)})-I)}{\Psi(t\Omega)},\\
 & \textbf{c}_{22}(t\Lambda)=I+\frac{t^2(\Omega_3^2+\Omega_1^2)(\cosh(\sqrt{-\Psi(t\Omega)})-I)}{\Psi(t\Omega)}, \quad  \textbf{c}_{13}(t\Lambda)=-\frac{t^2\Omega_3 \Omega_1 (\cosh(\sqrt{-\Psi(t\Omega)})-I)}{\Psi(t\Omega)},\\
  & \textbf{c}_{33}(t\Lambda)=I+\frac{t^2(\Omega_2^2+\Omega_1^2)(\cosh(\sqrt{-\Psi(t\Omega)})-I)}{\Psi(t\Omega)},  \quad  \textbf{c}_{23}(t\Lambda)=-\frac{t^2\Omega_3 \Omega_2 (\cosh(\sqrt{-\Psi(t\Omega)})-I)}{\Psi(t\Omega)},\\
   &\textbf{s}_{12}(t\Lambda)=\frac{{ \ii}t\Omega_3 \sinh(\sqrt{-\Psi(t\Omega)})}{\sqrt{-\Psi(t\Omega)}},\ \
  \textbf{s}_{13}(t\Lambda)=\frac{{ \ii}t\Omega_2 \sinh(\sqrt{-\Psi(t\Omega)})}{\sqrt{-\Psi(t\Omega)}},\ \
   \textbf{s}_{23}(t\Lambda)=\frac{{ \ii}t\Omega_1 \sinh(\sqrt{-\Psi(t\Omega)})}{\sqrt{-\Psi(t\Omega)}},
  \end{aligned}
\end{equation}
with $\Omega_1=I_{N_z}  \otimes I_{N_y} \otimes \Omega_x$,
$\Omega_2=I_{N_z}  \otimes  \Omega_y  \otimes  I_{N_x}$,
$\Omega_3=\Omega_z  \otimes I_{N_y} \otimes I_{N_x}$, and $\Psi(t\Omega)=t^2(\Omega_1^2+\Omega_2^2+\Omega_3^2).$
\end{prop}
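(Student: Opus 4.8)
The plan is to reduce the two matrix trigonometric functions to hyperbolic functions of a single skew-symmetric block matrix and then resum the resulting scalar series. First I would observe that, since $\Lambda_w=\ii\Omega_w$, the matrix $\Lambda$ factors as $\Lambda=\ii\,\mathbf{S}$, where
\begin{equation*}
\mathbf{S}=\left(\begin{array}{ccc} \mathbf{0} & -\Omega_3 & \Omega_2 \\ \Omega_3 & \mathbf{0} & -\Omega_1 \\ -\Omega_2 & \Omega_1 & \mathbf{0}\end{array}\right)
\end{equation*}
is the ``cross-product'' block matrix built from the three commuting diagonal matrices $\Omega_1,\Omega_2,\Omega_3$. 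Because $\ii^{2m}=(-1)^m$, the power series defining the matrix cosine and sine collapse to $\cos(t\Lambda)=\cosh(t\mathbf{S})$ and $\sin(t\Lambda)=\ii\sinh(t\mathbf{S})$; after this reduction it suffices to treat $\cosh(t\mathbf{S})$ and $\sinh(t\mathbf{S})$, and the factor $1/\sqrt{\mu\eps}$ is recovered at the end by replacing $t$ with $t/\sqrt{\mu\eps}$.

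The key algebraic step is the identity $\mathbf{S}^3=-\Theta^2\mathbf{S}$ with $\Theta^2:=\Omega_1^2+\Omega_2^2+\Omega_3^2$ (so that $\Psi(t\Omega)=t^2\Theta^2$). I would verify it by first computing
\begin{equation*}
\mathbf{S}^2=\left(\begin{array}{ccc} -(\Omega_2^2+\Omega_3^2) & \Omega_1\Omega_2 & \Omega_1\Omega_3 \\ \Omega_1\Omega_2 & -(\Omega_1^2+\Omega_3^2) & \Omega_2\Omega_3 \\ \Omega_1\Omega_3 & \Omega_2\Omega_3 & -(\Omega_1^2+\Omega_2^2)\end{array}\right)
\end{equation*}
and then $\mathbf{S}^3=\mathbf{S}\,\mathbf{S}^2$. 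The crucial point is that all three matrices $\Omega_i$ are diagonal and hence commute, which makes $\mathbf{S}^2$ symmetric and makes $\mathbf{S}$ annihilate the outer-product block $(\Omega_i\Omega_j)_{i,j}$, so that $\mathbf{S}^3=-\Theta^2\mathbf{S}$. This identity gives the recurrences $\mathbf{S}^{2m}=(-\Theta^2)^{m-1}\mathbf{S}^2$ for $m\ge1$ and $\mathbf{S}^{2m+1}=(-\Theta^2)^m\mathbf{S}$ for $m\ge0$.

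Finally I would insert these recurrences into the Taylor series and resum. For the even part,
\begin{equation*}
\cos(t\Lambda)=\cosh(t\mathbf{S})=I+\mathbf{S}^2\sum_{m\ge1}\frac{t^{2m}(-\Theta^2)^{m-1}}{(2m)!}=I-\frac{t^2\mathbf{S}^2\big(\cosh\sqrt{-\Psi(t\Omega)}-I\big)}{\Psi(t\Omega)},
\end{equation*}
and for the odd part,
\begin{equation*}
\sin(t\Lambda)=\ii\sinh(t\mathbf{S})=\ii\mathbf{S}\sum_{m\ge0}\frac{t^{2m+1}(-\Theta^2)^m}{(2m+1)!}=\frac{\ii\,t\,\mathbf{S}\,\sinh\sqrt{-\Psi(t\Omega)}}{\sqrt{-\Psi(t\Omega)}}.
\end{equation*}
Reading off the blocks of $\mathbf{S}^2$ and $\mathbf{S}$ then reproduces exactly the entries $\textbf{c}_{ij}$ and $\textbf{s}_{ij}$ in \eqref{matrixs}, while the symmetry of $\mathbf{S}^2$ and the skew-symmetry of $\mathbf{S}$ account for the claimed symmetric and antisymmetric block patterns. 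I expect the only genuine subtlety to be the apparent division by $\Theta^2$ (equivalently by $\Psi(t\Omega)$), which degenerates on the zero-frequency modes where some diagonal entries of $\Theta^2$ vanish. This is handled by noting that $z\mapsto(\cosh\sqrt{-z}-1)/z$ and $z\mapsto\sinh\sqrt{-z}/\sqrt{-z}$ are entire even functions, so the displayed formulas are really these entire functions evaluated at the diagonal matrix $\Psi(t\Omega)$ and remain valid term by term; equivalently, since the $\Omega_i$ are simultaneously diagonal one may reduce to the scalar $3\times3$ Rodrigues-type computation at each diagonal index and pass to the limit $\Theta\to0$ by continuity.
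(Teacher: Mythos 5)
Your proof is correct and takes essentially the route the paper intends: the paper omits the proof of Proposition \ref{pro2}, stating only that it is analogous to the power-series resummation of Proposition \ref{pro1}, and your argument carries out exactly that resummation, organized through the factorization $\Lambda=\ii\,\mathbf{S}$ and the Rodrigues-type identity $\mathbf{S}^3=-(\Omega_1^2+\Omega_2^2+\Omega_3^2)\,\mathbf{S}$ (valid because the diagonal blocks $\Omega_1,\Omega_2,\Omega_3$ commute), whose block entries check out against \eqref{matrixs}. Your closing observation about the zero-frequency modes --- interpreting $(\cosh\sqrt{-z}-1)/z$ and $\sinh\sqrt{-z}/\sqrt{-z}$ as entire functions of $z$ evaluated at the diagonal matrix $\Psi(t\Omega)$ --- correctly resolves the removable singularity where diagonal entries of $\Psi(t\Omega)$ vanish, a point the paper's formulas (and the division $./\Psi$ in its Algorithm~2) leave implicit.
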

\begin{proof}The proof is similar to that of Proposition  \ref{pro1} and we skip it for { brevity. \hfill}
\end{proof}

 \subsection{Fully discrete scheme}
We now present the novel fully discrete scheme  of   the three-dimensional Maxwell's equations \eqref{maxsys}.
\begin{defi}\label{dIUA-PE}(\textbf{Fully discrete scheme.})
For solving the three-dimensional Maxwell's equations \eqref{maxsys},  the fully discrete scheme  is defined as follows.
\begin{description}
  \item[Step 1. (Space {step-sizes} and collocation points)] Choose even integers $N_x, N_y, N_z$ to get the space stepsizes $h_x= (x_R-x_L )/N_x,\
h_y= (y_R-y_L )/N_y,\ h_z= (z_R-z_L )/N_z$ and collocation points
$(x_j ,y_k ,z_l)$.
  \item[Step 2. (Initial values)]  Considering the initial values \eqref{IV} at   these collocation points   gives $(\EE^0_{x},\EE^0_{y},\EE^0_{z})  $ and $(\HH^0_{x},\HH^0_{y},\HH^0_{z}) ,$
{for} $w=x,y$ or $z$
\begin{equation}\label{iiv}
  \begin{aligned}\EE^0_{w}=\big(&E_{w}(x_1,y_1,z_1,0), \ldots, E_{w}(x_{N_x},y_1,z_1,0), E_{w}(x_1,y_2,z_1,0), \ldots, \\& E_{w}(x_{N_x},y_2,z_1,0),  \ldots,
E_{w}(x_1,y_{N_y},z_{N_z},0),  \ldots, E_{w}(x_{N_x},y_{N_y},z_{N_z},0)\big) ^{\intercal}, \\
 \HH^0_{w}=\big(&H_{w}(x_1,y_1,z_1,0), \ldots, H_{w}(x_{N_x},y_1,z_1,0), H_{w}(x_1,y_2,z_1,0), \ldots, \\& H_{w}(x_{N_x},y_2,z_1,0),\ldots,
H_{w}(x_1,y_{N_y},z_{N_z},0),  \ldots, H_{w}(x_{N_x},y_{N_y},z_{N_z},0)\big) ^{\intercal}.\end{aligned}
\end{equation}
Then the  initial values for the system  \eqref{maxreformFD1} are given by
 \begin{equation}\label{tran0}%
 \widehat{\EE}_w^0=\big(\mathcal{F}_{N_z}  \otimes \mathcal{F}_{N_y} \otimes \mathcal{F}_{N_x}\big) \EE^0_{w},\ \ \ \  \widehat{\HH}_w^0=\big(\mathcal{F}_{N_z}  \otimes \mathcal{F}_{N_y} \otimes \mathcal{F}_{N_x}\big) \HH^0_{w}.
\end{equation}

 \item[Step 3. (Time integration)]  For solving the three-dimensional Maxwell's equations \eqref{maxsys} on the time interval $[0,t_{end}]$,
  the  following scheme is considered
  \begin{equation*}%\label{time inte H}
\begin{array}[c]{ll}%
 \sqrt{\mu}\widehat{\HH}^{t_{end}}_x= &\sqrt{\mu}\big(\textbf{c}_{11}(t_{end}\Lambda/\sqrt{\mu\eps}) \widehat{\HH}^{0}_x
+\textbf{c}_{12}(t_{end}\Lambda/\sqrt{\mu\eps}) \widehat{\HH}^{0}_y+\textbf{c}_{13}(t_{end}\Lambda/\sqrt{\mu\eps}) \widehat{\HH}^{0}_z\big)\\
&-\sqrt{\eps}\big(-\textbf{s}_{12}(t_{end}\Lambda/\sqrt{\mu\eps}) \widehat{\EE}^{0}_y
+\textbf{s}_{13}(t_{end}\Lambda/\sqrt{\mu\eps}) \widehat{\EE}^{0}_z\big),\\
 \sqrt{\mu}\widehat{\HH}^{t_{end}}_y= &\sqrt{\mu}\big(\textbf{c}_{12}(t_{end}\Lambda/\sqrt{\mu\eps}) \widehat{\HH}^{0}_x
+\textbf{c}_{22}(t_{end}\Lambda/\sqrt{\mu\eps}) \widehat{\HH}^{0}_y+\textbf{c}_{23}(t_{end}\Lambda/\sqrt{\mu\eps}) \widehat{\HH}^{0}_z\big)\\
&-\sqrt{\eps}\big(\textbf{s}_{12}(t_{end}\Lambda/\sqrt{\mu\eps}) \widehat{\EE}^{0}_x
-\textbf{s}_{23}(t_{end}\Lambda/\sqrt{\mu\eps}) \widehat{\EE}^{0}_z\big),\\
 \sqrt{\mu}\widehat{\HH}^{t_{end}}_z= &\sqrt{\mu}\big(\textbf{c}_{13}(t_{end}\Lambda/\sqrt{\mu\eps}) \widehat{\HH}^{0}_x
+\textbf{c}_{23}(t_{end}\Lambda/\sqrt{\mu\eps}) \widehat{\HH}^{0}_y+\textbf{c}_{33}(t_{end}\Lambda/\sqrt{\mu\eps}) \widehat{\HH}^{0}_z\big)\\
&-\sqrt{\eps}\big(-\textbf{s}_{13}(t_{end}\Lambda/\sqrt{\mu\eps}) \widehat{\EE}^{0}_x
+\textbf{s}_{23}(t_{end}\Lambda/\sqrt{\mu\eps}) \widehat{\EE}^{0}_y\big),\\
 \sqrt{\eps}\widehat{\EE}^{t_{end}}_x= &\sqrt{\eps}\big(\textbf{c}_{11}(t_{end}\Lambda/\sqrt{\mu\eps}) \widehat{\EE}^{0}_x
+\textbf{c}_{12}(t_{end}\Lambda/\sqrt{\mu\eps}) \widehat{\EE}^{0}_y+\textbf{c}_{13}(t_{end}\Lambda/\sqrt{\mu\eps}) \widehat{\EE}^{0}_z\big)\\
&+\sqrt{\mu}\big(-\textbf{s}_{12}(t_{end}\Lambda/\sqrt{\mu\eps}) \widehat{\HH}^{0}_y
+\textbf{s}_{13}(t_{end}\Lambda/\sqrt{\mu\eps}) \widehat{\HH}^{0}_z\big),\\
 \sqrt{\eps}\widehat{\EE}^{t_{end}}_y= &\sqrt{\eps}\big(\textbf{c}_{12}(t_{end}\Lambda/\sqrt{\mu\eps}) \widehat{\EE}^{0}_x
+\textbf{c}_{22}(t_{end}\Lambda/\sqrt{\mu\eps}) \widehat{\EE}^{0}_y+\textbf{c}_{23}(t_{end}\Lambda/\sqrt{\mu\eps}) \widehat{\EE}^{0}_z\big)\\
&+\sqrt{\mu}\big(\textbf{s}_{12}(t_{end}\Lambda/\sqrt{\mu\eps}) \widehat{\HH}^{0}_x
-\textbf{s}_{23}(t_{end}\Lambda/\sqrt{\mu\eps}) \widehat{\HH}^{0}_z\big),\\
 \sqrt{\eps}\widehat{\EE}^{t_{end}}_z= &\sqrt{\eps}\big(\textbf{c}_{13}(t_{end}\Lambda/\sqrt{\mu\eps}) \widehat{\EE}^{0}_x
+\textbf{c}_{23}(t_{end}\Lambda/\sqrt{\mu\eps}) \widehat{\EE}^{0}_y+\textbf{c}_{33}(t_{end}\Lambda/\sqrt{\mu\eps}) \widehat{\EE}^{0}_z\big)\\
&+\sqrt{\mu}\big(-\textbf{s}_{13}(t_{end}\Lambda/\sqrt{\mu\eps}) \widehat{\HH}^{0}_x
+\textbf{s}_{23}(t_{end}\Lambda/\sqrt{\mu\eps}) \widehat{\HH}^{0}_y\big),\\
\end{array}\end{equation*}
where $\textbf{c}_{\cdot}$ and $\textbf{s}_{\cdot}$ are determined
{by} \eqref{matrixs}.

 \item[Step 4. (Final result)]   The final results $$\EE^{t_{end}}_w\approx (E_w(x_j,y_k,z_l,t_{end}))_{j,k,l}\ \  \textmd{and} \ \ \HH^{t_{end}}_w\approx (H_w(x_j,y_k,z_l,t_{end}))_{j,k,l}$$
approximating the solution of \eqref{maxsys} at the collocation points $(x_j ,y_k ,z_l)$ and at the time $t_{end}$ are given by
 \begin{equation}\label{tran1}%
 \EE^{t_{end}}_w=\big(\mathcal{F}^{-1}_{N_z}  \otimes \mathcal{F}^{-1}_{N_y} \otimes \mathcal{F}^{-1}_{N_x}\big)\widehat{\EE}^{t_{end}}_w, \quad \HH^{t_{end}}_w=\big(\mathcal{F}^{-1}_{N_z}  \otimes \mathcal{F}^{-1}_{N_y} \otimes \mathcal{F}^{-1}_{N_x}\big)\widehat{\HH}^{t_{end}}_w.
\end{equation}
\end{description}
 %The details  of the whole procedure   are given in  \textbf{Algorithm 4}.
\end{defi}

 \subsection{Fast computation and its cost}

In this part, we discuss the complexity of the proposed scheme. A fast solver
can be used  to increase computational efficiency. The idea is based on the diagonal matrix
  and the Fast Fourier Transform (FFT) algorithm.

We first discuss the complexity of deriving initial values.
Computing the  collocation points  requires $\mathcal{O}(N_x+N_y+N_z)$ arithmetic operations and   storage.
Then the storage cost and computational cost of the
  values $\EE^0_{w}, \HH^0_{w}$ \eqref{iiv}  are both $\mathcal{O}(N_x N_y N_z)$.
The Fast Fourier Transform (FFT)  algorithm can be applied to obtain $ \widehat{\EE}_w^0$,  $ \widehat{\HH}_w^0$  \eqref{tran0}
and the storage cost is  $\mathcal{O}(N_x N_y N_z)$.
The details of this  procedure and its cost are presented in Algorithm 1.

{We then} discuss the computational characteristics of
the proposed algorithm. We have to compute the coefficients
\eqref{matrixs} and {  this can be achieved by vector
operations since $\Lambda$ is diagonal.} Thus the cost and storage
of this step can be reduced to  $\mathcal{O} (N_xN_yN_z)$  from
$\mathcal{O} (N_x^2N_y^2N_z^2)$. {Moreover,} it is noted
here that a great advantage of the scheme is that arbitrary large
time {step-size} is accepted. The final results at any
$t_{end}$ are obtained from the initial values by  only one step
computation with a time step $ \triangle t=t_{end}$. Therefore, the
cost  of the scheme is very low in comparison  with the standard
methods using a time {step-size:} $0<\triangle t<1$. The
detailed complexity of the fully discrete scheme  \ref{dIUA-PE} is
{stated} in Algorithm 2.

%If this is done naively the numerical method would suffer from excessive
%computational cost that scales as O  If n degrees of
%freedom are used in each coordinate direction in the full discretization, then the storage cost is reduced to
%O(rn d ) from O(n 2d ), and similarly for the number of arithmetic operations.
%where n is the number of grid points (per direction) and R = max( ). In addition, we have
%to compute the coefficients which requires O(  ) arithmetic operations. The storage cost of the
%coefficients is O( ) and is dominated by a 3 and h 3
%Firstly, For example, $\mathcal{F}$ and $\mathcal{F}^{-1}$ need only be computed once
%and the Fast Fourier Transform (FFT) and Inverse  Fast Fourier Transform (IFFT) algorithm can be applied to the above process.
%The details are presented in Algorithm 1.

%Solving the evolution equations is at most
%(both in
%terms of cost as well as in terms of storage),
%n this section, we will discuss the computational characteristics of the pro-
%posed algorithm. Solving the evolution equations is at most
%(both in
%terms of cost as well as in terms of storage), where n is the number of grid
%points per direction. As we will see in section 5, it is often sufficient to use sig-
%nificantly fewer grid points in the velocity (i.e. v) directions than in the spatial
%(i.e. x) directions. Thus, the evolution equation for K, equation (23), dominates
%the computational effort. This, in particular, makes the comparison to lattice
%Boltzmann methods (which only have to integrate x-dependent quantities) more
%favorable.

 \begin{algorithm}[t!]\label{al1}
\caption{{ \textbf{(Initial values)}} The goal of the algorithm is to obtain the initial values $\widehat{\EE}_w^0$ and  $ \widehat{\HH}_w^0$ for the method proposed in this paper. }
 \begin{description}
             \item[Input] $N_x, N_y, N_z$ (even integers)
             \item[Output] $ \widehat{\EE}_w^0$,  $ \widehat{\HH}_w^0$ (initial values for our scheme)
\end{description}
\begin{algorithmic}[1]
\STATE {Compute $h_x= (x_R-x_L )/N_x,\
h_y= (y_R-y_L )/N_y,\ h_z= (z_R-z_L )/N_z$. \\
\textbf{Cost: $\mathcal{O}(1)$. Storage: $\mathcal{O}(1)$.}}
\STATE {Compute $x_j =x_L + (j-1)h_x,\ y_k =y_L + (k-1)h_y,\ z_l =z_L + (l-1)h_z,$ $
j= 1,2,\ldots,N_x,\ k= 1,2,\ldots,N_y,\ l= 1,2,\ldots,N_z$.\\
\textbf{Cost: $\mathcal{O}(N_x+N_y+N_z)$. Storage: $\mathcal{O}(N_x+N_y+N_z)$.}}
\STATE {Compute the values $\EE^0_{w}, \HH^0_{w}$ \eqref{iiv} from \eqref{IV} such that $$[\EE^{0}_w]_{N_xN_y(l-1)+N_x(k-1)+j}= E_w(x_j,y_k,z_l,0),\ \   [\HH^{0}_w]_{N_xN_y(l-1)+N_x(k-1)+j}= H_w(x_j,y_k,z_l,0)$$ for $w=x,y,z $ and $j= 1,2,\ldots,N_x,\ k= 1,2,\ldots,N_y,\ l= 1,2,\ldots,N_z$.\\
\textbf{Cost: $\mathcal{O}\big(N_xN_yN_z\big)$. Storage: $\mathcal{O}(N_xN_yN_z)$.}}
%\FOR {$j=1$ to $n-1$}
\STATE {By  Fast Fourier Transform (FFT), compute the initial values  $$\widehat{\EE}_w^0=\big(\mathcal{F}_{N_z}  \otimes \mathcal{F}_{N_y} \otimes \mathcal{F}_{N_x}\big) \EE^0_{w},\ \ \ \  \widehat{\HH}_w^0=\big(\mathcal{F}_{N_z}  \otimes \mathcal{F}_{N_y} \otimes \mathcal{F}_{N_x}\big) \HH^0_{w}.$$
\textbf{Cost: $\mathcal{O}(\textmd{FFT})$. Storage: $\mathcal{O}(N_xN_yN_z)$.}}
\end{algorithmic}
\end{algorithm}

 \begin{algorithm}[t!]\label{al123}
\caption{\textbf{(Fully discrete scheme)} The goal of the algorithm is to obtain numerical solution  $\EE^{t_{end}}_w\in \mathbb{C}^{N_xN_yN_z}$ and $\HH^{t_{end}}_w\in \mathbb{C}^{N_xN_yN_z}$
such that $[\EE^{t_{end}}_w]_{N_xN_y(l-1)+N_x(k-1)+j}\approx E_w(x_j,y_k,z_l,t_{end})$
and $[\HH^{t_{end}}_w]_{N_xN_y(l-1)+N_x(k-1)+j}\approx H_w(x_j,y_k,z_l,t_{end})$ for $w=x,y,z.$}
 \begin{description}
             \item[Input] $N_x, N_y, N_z,\widehat{\EE}_w^0,\ \widehat{\HH}_w^0$ (obtained by Algorithm 1) and $t_{end}$
             \item[Output] $\EE_w^{t_{end}}$,  $\HH_w^{t_{end}}$ (such that $[\EE^{t_{end}}_w]_{N_xN_y(l-1)+N_x(k-1)+j}\approx E_w(x_j,y_k,z_l,t_{end})$
and $[\HH^{t_{end}}_w]_{N_xN_y(l-1)+N_x(k-1)+j}\approx H_w(x_j,y_k,z_l,t_{end})$)
\end{description}
\begin{algorithmic}[1]
\STATE {Set $\mathbf{a}_w:=\frac{2\pi}{w_R-w_L}\Big(0,1,\ldots,\frac{N_w}{2}-1,0,\frac{N_w}{2}+1,\ldots,-2,-1\Big)^{\intercal}\in \mathbb{R}^{N_w}\  \textmd{for }\ w=x,y,z.$\\
\textbf{Cost: $\mathcal{O}(N_x+N_y+N_z)$. Storage: $\mathcal{O}(N_x+N_y+N_z)$.}}
\STATE {Compute $\mathbf{b}_x=\mathbf{1}_{N_z}  \otimes \mathbf{1}_{N_y} \otimes \mathbf{a}_x$,
$\mathbf{b}_y=\mathbf{1}_{N_z}  \otimes  \mathbf{a}_y  \otimes  \mathbf{1}_{N_x}$,
$\mathbf{b}_z=\mathbf{a}_z  \otimes \mathbf{1}_{N_y} \otimes \mathbf{1}_{N_x}$, and $\Psi =\kappa^2(\mathbf{b}_x^2+\mathbf{b}_y^2+\mathbf{b}_z^2).$ Here $\kappa=t_{end}/\sqrt{\mu\eps}$ and ${ \mathbf{1}_{N_w}=(1,1,\ldots,1)^{\intercal}\in \mathbb{R}^{N_w}}.$\\
\textbf{Cost: $\mathcal{O} (N_xN_yN_z)$.  Storage: $\mathcal{O} (N_xN_yN_z)$.}}
\STATE { Compute  (\textbf{only once})
  \begin{equation*}%\label{matrixsv}
  \begin{aligned}
   &\textbf{r}_1:= (\cosh(\sqrt{-\Psi })-\mathbf{1}_{N_xN_yN_z})./\Psi , \ \textbf{r}_2:=\sinh(\sqrt{-\Psi })./\sqrt{-\Psi },\\
 &\textbf{c}_{11} =\mathbf{1}_{N_xN_yN_z}+
 \kappa^2(\mathbf{b}_z^2+\mathbf{b}_y^2).*\textbf{r}_1 , \ \
   \textbf{c}_{12} =- \kappa^2\mathbf{b}_y.* \mathbf{b}_x .* \textbf{r}_1 ,\ \ \textbf{s}_{12} = {\ii}\kappa\mathbf{b}_z.* \textbf{r}_2,\\
 & \textbf{c}_{22} =\mathbf{1}_{N_xN_yN_z}+ \kappa^2(\mathbf{b}_z^2+
 \mathbf{b}_x^2).*\textbf{r}_1 , \ \
   \textbf{c}_{13} =- \kappa^2\mathbf{b}_z .*\mathbf{b}_x .*\textbf{r}_1 ,\ \ \textbf{s}_{13} ={\ii} \kappa\mathbf{b}_y .*\textbf{r}_2,\\
  & \textbf{c}_{33} =\mathbf{1}_{N_xN_yN_z}+\kappa^2(\mathbf{b}_y^2
  +\mathbf{b}_x^2).*\textbf{r}_1 , \ \
  \textbf{c}_{23} =- \kappa^2\mathbf{b}_z .*\mathbf{b}_y .*\textbf{r}_1 ,\ \ \textbf{s}_{23} ={\ii}\kappa\mathbf{b}_x.* { \textbf{r}_2,}
  \end{aligned}
\end{equation*}
{ where $./$ and $.*$ denote the element-by-element division and multiplication of two vectors, respectively.}
\\
\textbf{Cost: $\mathcal{O} (N_xN_yN_z)$.  Storage: $\mathcal{O} (N_xN_yN_z)$.}}
 \STATE {Compute  (\textbf{only one step}) \begin{equation*}%\label{time inte H}
\begin{array}[c]{ll}%
&\widehat{\HH}^{0}_x:=\sqrt{\mu}\widehat{\HH}^{0}_x,  \ \widehat{\HH}^{0}_y:= \sqrt{\mu}\widehat{\HH}^{0}_y,\ \widehat{\HH}^{0}_z:= \sqrt{\mu}\widehat{\HH}^{0}_z,\  \widehat{\EE}^{0}_x:= \sqrt{\eps}\widehat{\EE}^{0}_x,\   \widehat{\EE}^{0}_y:= \sqrt{\eps}\widehat{\EE}^{0}_y,\ \widehat{\EE}^{0}_z:= \sqrt{\eps}\widehat{\EE}^{0}_z, \\
&\widehat{\HH}^{t_{end}}_x=\textbf{c}_{11}  .*\widehat{\HH}^{0}_x
+\textbf{c}_{12}  .*\widehat{\HH}^{0}_y+\textbf{c}_{13}  .*\widehat{\HH}^{0}_z +\textbf{s}_{12} .* \widehat{\EE}^{0}_y
-\textbf{s}_{13}  .*\widehat{\EE}^{0}_z,\\
&\widehat{\HH}^{t_{end}}_y=\textbf{c}_{12}  .*\widehat{\HH}^{0}_x
+\textbf{c}_{22}  .*\widehat{\HH}^{0}_y+\textbf{c}_{23}  .*\widehat{\HH}^{0}_z
 -\textbf{s}_{12}  .*\widehat{\EE}^{0}_x
+\textbf{s}_{23} .* \widehat{\EE}^{0}_z,\\
&\widehat{\HH}^{t_{end}}_z=\textbf{c}_{13}  .*\widehat{\HH}^{0}_x
+\textbf{c}_{23}  .*\widehat{\HH}^{0}_y+\textbf{c}_{33}  .*\widehat{\HH}^{0}_z
 +\textbf{s}_{13}  .*\widehat{\EE}^{0}_x
-\textbf{s}_{23} .* \widehat{\EE}^{0}_y,\\
 & \widehat{\EE}^{t_{end}}_x=\textbf{c}_{11}  .*\widehat{\EE}^{0}_x
+\textbf{c}_{12}  .*\widehat{\EE}^{0}_y+\textbf{c}_{13} .* \widehat{\EE}^{0}_z -\textbf{s}_{12}  .*\widehat{\HH}^{0}_y
+\textbf{s}_{13}  .*\widehat{\HH}^{0}_z,\\
 &\widehat{\EE}^{t_{end}}_y= \textbf{c}_{12}  .*\widehat{\EE}^{0}_x
+\textbf{c}_{22}  .*\widehat{\EE}^{0}_y+\textbf{c}_{23} .* \widehat{\EE}^{0}_z +\textbf{s}_{12} .* \widehat{\HH}^{0}_x
-\textbf{s}_{23} .* \widehat{\HH}^{0}_z,\\
& \widehat{\EE}^{t_{end}}_z= \textbf{c}_{13} .* \widehat{\EE}^{0}_x
+\textbf{c}_{23}  .*\widehat{\EE}^{0}_y+\textbf{c}_{33} .* \widehat{\EE}^{0}_z-\textbf{s}_{13} .* \widehat{\HH}^{0}_x
+\textbf{s}_{23}  .*\widehat{\HH}^{0}_y,\\
&\widehat{\HH}^{t_{end}}_x:=\frac{1}{\sqrt{\mu}}\widehat{\HH}^{t_{end}}_x,  \ \widehat{\HH}^{t_{end}}_y:= \frac{1}{\sqrt{\mu}}\widehat{\HH}^{t_{end}}_y,\ \widehat{\HH}^{t_{end}}_z:= \frac{1}{\sqrt{\mu}}\widehat{\HH}^{t_{end}}_z,\\
& \widehat{\EE}^{t_{end}}_x:=\frac{1}{\sqrt{\eps}}\widehat{\EE}^{t_{end}}_x,\ \ \ \widehat{\EE}^{t_{end}}_y:= \frac{1}{\sqrt{\eps}}\widehat{\EE}^{t_{end}}_y,\ \ {\widehat{\EE}^{t_{end}}_z:= \frac{1}{\sqrt{\eps}}\widehat{\EE}^{t_{end}}_z.}
\end{array}\end{equation*}\\
\textbf{Cost: $\mathcal{O} (N_xN_yN_z)$. Storage: $\mathcal{O} (N_xN_yN_z)$.}}
 \STATE {Using Inverse Fast Fourier Transform (IFFT), compute for $w=x,y,z$
  $$\EE^{t_{end}}_w=\big(\mathcal{F}^{-1}_{N_z}  \otimes \mathcal{F}^{-1}_{N_y} \otimes \mathcal{F}^{-1}_{N_x}\big)\widehat{\EE}^{t_{end}}_w, \quad {\HH^{t_{end}}_w=\big(\mathcal{F}^{-1}_{N_z}  \otimes \mathcal{F}^{-1}_{N_y} \otimes \mathcal{F}^{-1}_{N_x}\big)\widehat{\HH}^{t_{end}}_w.}$$
\textbf{Cost: $\mathcal{O}(\textmd{IFFT})$. Storage: $\mathcal{O} (N_xN_yN_z)$.}}
\end{algorithmic}
\end{algorithm}

%\begin{table}[t!]
%\centering
%\begin{tabular}
%[c]{|c|c|}\hline Stiff order conditions & Order\\\hline
%$\psi_{1}(h/\epsilon \partial_\tau)=0$ & 1\\\hline
%$\psi_{2}(h/\epsilon \partial_\tau)=0$ & 2\\
%$\psi_{1,i}(h/\epsilon \partial_\tau)=0$ & 2\\\hline
%$\psi_{3}(h/\epsilon \partial_\tau)=0$ & 3\\
%$\sum\limits_{i=1}^{s}\bar{b}_{i}(h/\epsilon \partial_\tau)K\psi_{2,i}(h/\epsilon \partial_\tau)=0$ & 3\\\hline
%$\psi_{4}(h/\epsilon \partial_\tau)=0$ & 4\\
%$\sum\limits_{i=1}^{s}\bar{b}_{i}(h/\epsilon \partial_\tau)K\psi_{3,i}(h/\epsilon \partial_\tau)=0$ & 4\\
%$\sum\limits_{i=1}^{s}\bar{b}_{i}(h/\epsilon \partial_\tau)K\sum\limits_{j=1}^{s}\bar{a}_{ij}(h/\epsilon \partial_\tau)A\psi_{2,i}(h/\epsilon \partial_\tau)=0$ & 4\\
%$\sum\limits_{i=1}^{s}\bar{b}_{i}(h/\epsilon \partial_\tau)c_iK\psi_{2,i}(h/\epsilon \partial_\tau)=0$ & 4\\\hline
%\end{tabular}
%\caption{Stiff order conditions with any  bounded operators $K$ and $A$. }%
%\label{tab1}%
%\end{table}

\section{Convergence}\label{sec:3}
 %For the numerical methods applied to  \eqref{2scale Fourier}, we consider two symmetric exponential-type integrators.
In this proof, we focus on the error estimates of the proposed
scheme. For simplicity we consider the cubic domain $\Omega=
[0,2\pi]^3$ with the spatial grid points $N_x = N_y = N_z =N.$ A
general cuboid domain can be linearly mapped into $\Omega=
[0,2\pi]^3$. Let $C^{\infty}_p (\Omega)$ be the set of infinitely
differentiable periodic functions with period $2\pi$, and $H^{r}_p
(\Omega)$ be the closure of $C^{\infty}_p (\Omega)$ in $H^{r}
(\Omega)$. Define the inner product by $\langle
u,v\rangle_{\Omega}=\frac{1}{8\pi^3}\int_{\Omega}u(x,y,z)
\overline{v(x,y,z)} dx dy dz$ and the discrete inner product and
norm by, respectively, $\langle
u,v\rangle_{N}=\frac{1}{N^3}\sum\limits_{j=1}^{N}\sum\limits_{k=1}^{N}\sum\limits_{l=1}^{N}u(x_j,y_k,z_l)
\overline{v(x_j,y_k,z_l)},\ \norm{u}^2_N=\langle u,u\rangle_{N}.$
The norm and seminorm of $H^{r} (\Omega)$ are denoted by
$\norm{\cdot}_r$ and  $\abs{\cdot}_r$, respectively. In particular,
$\norm{\cdot}_0=\norm{\cdot}$. Let the interpolation space
$\mathcal{S}^I_N=\Big\{u|u=\sum\limits_{{
\abs{j},\abs{k},\abs{l}}\leq\frac{N}{2}}
\frac{\hat{u}_{j,k,l}}{c_jc_kc_l}e^{\ii(jx+ky+lz)}:
\overline{\hat{u}_{j,k,l}}=\hat{u}_{-j,-k,-l},
\hat{u}_{\frac{N}{2},k,l}=\hat{u}_{-\frac{N}{2},k,l},\hat{u}_{j,\frac{N}{2},l}=\hat{u}_{j,-\frac{N}{2},l},
\hat{u}_{j,k,\frac{N}{2}}=\hat{u}_{j,k,-\frac{N}{2}}\Big\}, $ where
$c_l=1$ for $\abs{l}< \frac{N}{2}$ and $c_l=2$ for $\abs{l}=
\frac{N}{2}$. We denote
$\mathcal{S}^O_N=\Big\{u|u=\sum\limits_{\abs{j},\abs{k},\abs{l}\leq\frac{N}{2}}
\hat{u}_{j,k,l}e^{\ii(jx+ky+lz)}:
\overline{\hat{u}_{j,k,l}}=\hat{u}_{-j,-k,-l} \Big\}.$
{We here remark} that $\mathcal{S}^I_N\subseteq
\mathcal{S}^O_N$ . We denote $\mathcal{P}^O_N: [L^2
(\Omega)]^3\rightarrow  [\mathcal{S}^O_N ]^3$ as the orthogonal
projection operator and recall the interpolation operator
$\mathcal{P}^I_N: [C (\Omega)]^3\rightarrow  [\mathcal{S}^I_N ]^3.$

\begin{lem}  (\cite{n5})
 For all $\mathbf{u}\in [\mathcal{S}^I_N]^3$, we have $\norm{\mathbf{u}}_0\leq \norm{\mathbf{u}}_N \leq  2\sqrt{2}\norm{\mathbf{u}}_0.$
If $\mathbf{u},\mathbf{v} \in [\mathcal{S}^I_N]^3$, then $\langle \partial_w \mathbf{u},\mathbf{v}\rangle_{N}=-\langle \mathbf{u},\partial_w  \mathbf{v}\rangle_{N}$ for $w=x,y,z.$
\end{lem}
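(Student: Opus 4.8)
The plan is to reduce both claims to a one--dimensional discrete orthogonality relation on the grid and then to keep careful track of the Nyquist frequencies $j=\pm N/2$, where the discrete and continuous inner products disagree. Both $\langle\cdot,\cdot\rangle_{\Omega}$ and $\langle\cdot,\cdot\rangle_N$ decompose as a sum over the three vector components and, within each component, as a tensor product of one--dimensional forms over the coordinate directions; hence it suffices to argue for a single scalar $u\in\mathcal{S}^I_N$ and then sum over components. The first ingredient I would record is the quadrature identity $\frac1N\sum_{m=1}^{N}e^{\ii(j-j')x_m}=1$ if $j\equiv j'\ (\mathrm{mod}\ N)$ and $0$ otherwise. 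For $|j|,|j'|\le N/2$ this reproduces continuous orthogonality except when $\{j,j'\}=\{N/2,-N/2\}$, so the only discrepancy between the two inner products is an aliasing coupling of the two Nyquist modes.

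For the norm equivalence I would substitute $u=\sum_{|j|,|k|,|l|\le N/2}\frac{\hat u_{j,k,l}}{c_jc_kc_l}e^{\ii(jx+ky+lz)}$ into $\norm{u}_N^2=\langle u,u\rangle_N$ and apply the quadrature identity in each direction. Every frequency triple then contributes its continuous weight $|\hat u_{j,k,l}|^2/(c_j^2c_k^2c_l^2)$, together with extra terms arising whenever a coordinate index equals $N/2$ and is aliased with $-N/2$. The symmetry constraints $\hat u_{N/2,k,l}=\hat u_{-N/2,k,l}$ (and their analogues in $k,l$) are exactly what make these aliasing terms real, and a direct computation assembles them into $\norm{u}_N^2=\langle(I+P_x)(I+P_y)(I+P_z)u,u\rangle_{\Omega}$, where $P_w$ denotes the $\langle\cdot,\cdot\rangle_{\Omega}$--orthogonal projection onto the Nyquist mode in the $w$--direction. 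In one dimension this already gives $\norm{u}_0^2\le\norm{u}_N^2\le2\norm{u}_0^2$. Since $P_x,P_y,P_z$ are commuting orthogonal projections with $0\le P_w\le I$, the operator $(I+P_x)(I+P_y)(I+P_z)$ has spectrum in $[1,8]$, and taking square roots yields $\norm{u}_0\le\norm{u}_N\le2\sqrt2\,\norm{u}_0$. The constant $2\sqrt2=2^{3/2}$ is thus one factor $\sqrt2$ per space dimension, coming purely from the Nyquist aliasing.

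For the discrete integration--by--parts identity, the point to notice first is that although $\partial_w u$ need not lie in $\mathcal{S}^I_N$ (differentiation breaks the Nyquist symmetry), its grid values are unaffected: the Nyquist component $\hat u_{N/2}\cos(Nw/2)$ differentiates to $-\tfrac{N}{2}\hat u_{N/2}\sin(Nw/2)$, which vanishes at every node $w_m$. Consequently $\partial_w u$ agrees on the grid with the pseudospectral derivative obtained from the differentiation matrix $D_w$, and likewise for $v$. Writing grid values as vectors and using that $D_w$ is real, both $\langle\partial_w u,v\rangle_N$ and $\langle u,\partial_w v\rangle_N$ reduce to $\frac{1}{N^3}$ times a bilinear form built from $D_w$ and $D_w^{\intercal}$, respectively; the skew--symmetry $D_w^{\intercal}=-D_w$ then gives $\langle\partial_w u,v\rangle_N=-\langle u,\partial_w v\rangle_N$. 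That skew--symmetry is immediate from $(D_w)_{j,l}=\tfrac12\nu_w(-1)^{j+l}\cot(\nu_w(w_j-w_l)/2)$ together with the oddness of $\cot$ and the vanishing diagonal; equivalently, in Fourier variables each surviving mode $e^{\ii j w}$, $|j|<N/2$, carries the purely imaginary factor $\ii j$, whose sign flips when passed to the conjugated second argument of $\langle\cdot,\cdot\rangle_N$.

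The main obstacle, and the only place that demands real care, is the bookkeeping at the Nyquist frequencies. The nonstandard weights $c_j$ and the boundary symmetry relations defining $\mathcal{S}^I_N$ are engineered precisely so that the aliased $\pm N/2$ contributions combine into the clean Hermitian form with the projections $P_w$ (giving exactly the factor $\sqrt2$ per dimension) and so that the Nyquist mode contributes nothing spurious to the grid derivative. Away from these modes the discrete forms are exact, so once the Nyquist accounting is settled both estimates follow from elementary manipulations.
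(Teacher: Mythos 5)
Your proof is correct, but note that the paper itself offers no proof of this lemma to compare against: it is quoted verbatim from \cite{n5} (Cai, Hong, Wang and Gong) with a citation in place of an argument, so your write-up is in effect supplying the standard proof that the paper delegates to the literature. All the load-bearing steps check out. The discrete orthogonality relation $\frac1N\sum_{m=1}^N e^{\ii(j-j')x_m}=[\,j\equiv j' \ (\mathrm{mod}\ N)\,]$ does confine the discrepancy between $\langle\cdot,\cdot\rangle_\Omega$ and $\langle\cdot,\cdot\rangle_N$ to the coupling of the $\pm N/2$ modes; with the weights $c_{\pm N/2}=2$ and the symmetry $\hat u_{N/2,\cdot,\cdot}=\hat u_{-N/2,\cdot,\cdot}$, a mode class whose index is Nyquist in exactly $s$ of the three directions contributes to $\norm{u}_N^2$ precisely $2^s$ times its contribution to $\norm{u}_0^2$ (e.g.\ in one dimension, $\norm{u}_N^2=\sum_{|j|<N/2}|\hat u_j|^2+|\hat u_{N/2}|^2$ versus $\norm{u}_0^2=\sum_{|j|<N/2}|\hat u_j|^2+\tfrac12|\hat u_{N/2}|^2$), which is exactly your operator identity with the commuting projections $P_w$ and yields the sharp constants $1$ and $2\sqrt2=2^{3/2}$. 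For the second claim, your key observation is the right one and is the only genuinely delicate point: $\partial_w u$ leaves $\mathcal{S}^I_N$, but the differentiated Nyquist component $-\tfrac N2\hat u_{N/2}\sin(Nw/2)$ vanishes at every node, so on the grid $\partial_w$ coincides with the matrix $D_w$, and the identity $\langle\partial_w\mathbf{u},\mathbf{v}\rangle_N=-\langle\mathbf{u},\partial_w\mathbf{v}\rangle_N$ then follows from $D_w^{\intercal}=-D_w$ (oddness of $\cot$, symmetric sign factor $(-1)^{j+l}$, zero diagonal) together with the realness of $D_w$, which lets the conjugation in the second slot pass through. One could shorten the norm-equivalence part by skipping the explicit operator identity and arguing mode-class by mode-class with the ratio $2^s\in\{1,2,4,8\}$, but that is a cosmetic difference; your argument is complete as it stands.
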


\begin{lem}  (\cite{n5})
  $\langle \mathcal{P}^O_N \mathbf{u},\mathbf{v}\rangle_{N}=\langle \mathbf{u},\mathbf{v}\rangle_{N}$ for $ \mathbf{v}\in [\mathcal{S}^O_N ]^3.$  By noting  $\mathcal{P}^O_N\partial_w u=\partial_w \mathcal{P}^O_N u$ for $w=x,y,z$, we can see that   $\CU$ and $\mathcal{P}^O_N$
satisfy the
commutative law.
\end{lem}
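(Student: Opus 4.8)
The plan is to pass to the Fourier basis $\{e^{\ii(jx+ky+lz)}\}$, on which both maps in the statement act as pure multipliers: $\mathcal{P}^O_N$ keeps exactly the coefficients with $\abs{j},\abs{k},\abs{l}\leq N/2$ and annihilates the rest, while $\partial_w$ multiplies the $(j,k,l)$-coefficient by $\ii j$, $\ii k$ or $\ii l$ for $w=x,y,z$. Since each entry of $\CU$ is a linear combination of the first-order operators $\partial_w$, everything reduces to understanding how these two multipliers interact with each other and with the grid quadrature $\langle\cdot,\cdot\rangle_N$.

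I would dispose of the commutativity claim first, as it is the easy half. Truncation and differentiation are Fourier multipliers supported on the same index lattice, and differentiation never transports a coefficient from one mode to another; hence on every mode $(j,k,l)$ both $\mathcal{P}^O_N\partial_w u$ and $\partial_w\mathcal{P}^O_N u$ equal $\ii j\,\hat{u}_{j,k,l}$ (resp.\ $\ii k,\ii l$) inside the retained block and $0$ outside. This yields $\mathcal{P}^O_N\partial_w u=\partial_w\mathcal{P}^O_N u$ for $w=x,y,z$, and applying it to the three scalar entries of the curl gives $\CU\mathcal{P}^O_N=\mathcal{P}^O_N\CU$. No real difficulty is expected here.

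For the inner-product identity I would expand $\mathbf{v}\in[\mathcal{S}^O_N]^3$ in the Fourier basis and invoke the discrete orthogonality relation $\frac{1}{N^3}\sum_{\mathrm{grid}}e^{\ii(m-m')\cdot \mathbf{x}}=1$ when $m-m'\in N\ZZ^3$ and $0$ otherwise. Testing $\mathbf{u}-\mathcal{P}^O_N\mathbf{u}$ against each mode of $\mathbf{v}$, the claim reduces to the vanishing of the discrete pairing of the discarded high-frequency part of $\mathbf{u}$ against $[\mathcal{S}^O_N]^3$. The genuine obstacle, and the step I expect to be delicate, is aliasing: a grid sum cannot distinguish two modes whose indices differ by a multiple of $N$, so coefficients lying just outside the retained block — most critically those folding onto the boundary indices $\pm N/2$, where $\mathcal{S}^O_N$ and $\mathcal{S}^I_N$ are precisely arranged to differ — threaten to contribute spuriously. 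I would control these terms using the symmetry constraints $\overline{\hat{u}_{j,k,l}}=\hat{u}_{-j,-k,-l}$ defining $\mathcal{S}^O_N$ together with the discrete/continuous norm equivalence of the preceding lemma; once the top modes $\pm N/2$ are pinned down, the surviving terms match those of $\langle\mathcal{P}^O_N\mathbf{u},\mathbf{v}\rangle_N$ mode by mode and the identity follows.
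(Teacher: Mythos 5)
Your first half is fine and is the standard argument: on the periodic torus both $\mathcal{P}^O_N$ and $\partial_w$ act as Fourier multipliers on the same index lattice, hence commute, and applying this to the entries of the curl gives $\mathcal{P}^O_N\,\CU=\CU\,\mathcal{P}^O_N$. Note, however, that the paper itself supplies no proof of this lemma --- it is imported verbatim from \cite{n5} --- so the comparison can only be against what a correct proof must contain.

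The second half has a genuine gap, located exactly at the step you flagged as delicate: the aliasing contributions you hope to kill do \emph{not} vanish, and no combination of the conjugate symmetry defining $\mathcal{S}^O_N$ with the norm equivalence of the preceding lemma can make them vanish, because the identity in the form you are trying to prove (discrete pairing on both sides, $\mathbf{u}$ an arbitrary continuous real field) is false. Concretely, with $N=4$ and grid $x_j=\pi(j-1)/2$, take $u(x,y,z)=\cos(3x)$ and $v(x,y,z)=\cos(x)\in\mathcal{S}^O_4$; both are real, so the symmetry constraint $\overline{\hat{u}_{j,k,l}}=\hat{u}_{-j,-k,-l}$ holds, yet $\mathcal{P}^O_4u=0$ while $\langle u,v\rangle_N=\frac{1}{4}\bigl(1\cdot 1+0+(-1)(-1)+0\bigr)=\frac{1}{2}\neq 0$, since the discarded mode $3$ folds onto the retained mode $-1$. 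Observe that the damaging folds are \emph{interior} ones, $N/2+1\mapsto -N/2+1$, not the boundary indices $\pm N/2$ you singled out, so pinning down the top modes does not address them; and the norm-equivalence lemma is of no use because it applies only to elements of $[\mathcal{S}^I_N]^3$, whereas the discarded tail $\mathbf{u}-\mathcal{P}^O_N\mathbf{u}$ is not a trigonometric polynomial of degree at most $N/2$ and its grid restriction is uncontrolled. What is actually true, and what a correct proof should say, is one of the following: (i) with the \emph{continuous} inner product the identity $\langle\mathcal{P}^O_N\mathbf{u},\mathbf{v}\rangle_{\Omega}=\langle\mathbf{u},\mathbf{v}\rangle_{\Omega}$ is a one-line consequence of $\mathbf{u}-\mathcal{P}^O_N\mathbf{u}\perp[\mathcal{S}^O_N]^3$, the definition of the orthogonal projection; or (ii) the discrete identity holds for the \emph{interpolation} operator $\mathcal{P}^I_N$ for every $\mathbf{v}$, trivially, since $\mathcal{P}^I_N\mathbf{u}$ and $\mathbf{u}$ share all grid values --- but $\mathcal{P}^I_N$ then fails to commute with $\partial_w$, again by aliasing, so each operator enjoys only one of the two asserted properties in the discrete setting. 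As printed, the discrete statement must be read with the tacit restriction that $\mathbf{u}$ carries no modes aliasing into the retained block (e.g.\ $\mathbf{u}\in[\mathcal{S}^O_N]^3$, where $\mathcal{P}^O_N\mathbf{u}=\mathbf{u}$ and the claim is vacuous), which is how it is used in the convergence proof, where everything lives in $[\mathcal{S}^O_{N-2}]^3\subseteq[\mathcal{S}^I_N]^3$. Your plan to establish it for general $\mathbf{u}$ cannot be completed.
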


\begin{lem} {  (\cite{add3})}
If $0\leq \alpha\leq r$ and $\mathbf{u}\in [H_p^{r}
(\Omega)]^3$, then $ \norm{\mathcal{P}^O_N \mathbf{u}-\mathbf{u}}_{ \alpha}\leq C N^{ \alpha-r}\abs{\mathbf{u}}_r$  and in addition if $r>3/2$ then  $ \norm{\mathcal{P}^I_N \mathbf{u}-\mathbf{u}}_{ \alpha}\leq C N^{ \alpha-r}\abs{\mathbf{u}}_r$.
\end{lem}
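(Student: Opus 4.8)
The plan is to prove both estimates componentwise, so it suffices to treat a scalar $u\in H^r_p(\Omega)$ and bound the three components separately. The starting point is the Fourier characterisation of the Sobolev norms: expanding $u=\sum_{\mathbf{m}\in\mathbb{Z}^3}\hat{u}_{\mathbf{m}}e^{\ii \mathbf{m}\cdot\mathbf{x}}$ with $\mathbf{m}=(j,k,l)$ and using that $\{e^{\ii\mathbf{m}\cdot\mathbf{x}}\}$ is orthonormal for $\langle\cdot,\cdot\rangle_\Omega$ (the normalisation $\tfrac{1}{8\pi^3}$ matches the volume of $\Omega$), one has the equivalences $\norm{u}_\alpha^2 \sim \sum_{\mathbf{m}}(1+\abs{\mathbf{m}}^2)^\alpha\abs{\hat{u}_{\mathbf{m}}}^2$ and $\abs{u}_r^2\sim\sum_{\mathbf{m}}\abs{\mathbf{m}}^{2r}\abs{\hat{u}_{\mathbf{m}}}^2$, where $\abs{\mathbf{m}}^2=j^2+k^2+l^2$. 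With these at hand the whole argument reduces to elementary estimates on Fourier coefficients.

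For the orthogonal-projection bound I would use that $\mathcal{P}^O_N u$ is exactly the truncation of the series to $\max(\abs{j},\abs{k},\abs{l})\le N/2$, so that $u-\mathcal{P}^O_N u=\sum_{\max(\abs{j},\abs{k},\abs{l})>N/2}\hat{u}_{\mathbf{m}}e^{\ii\mathbf{m}\cdot\mathbf{x}}$. On this tail every $\mathbf{m}$ satisfies $\abs{\mathbf{m}}^2\ge (N/2)^2$, hence $1+\abs{\mathbf{m}}^2\ge(N/2)^2$ and $1+\abs{\mathbf{m}}^2\le 2\abs{\mathbf{m}}^2$. Writing $(1+\abs{\mathbf{m}}^2)^\alpha=(1+\abs{\mathbf{m}}^2)^{\alpha-r}(1+\abs{\mathbf{m}}^2)^{r}$ and using $\alpha-r\le 0$ to bound the first factor by $(N/2)^{2(\alpha-r)}$ and $2\abs{\mathbf{m}}^2$ for the second, I obtain $\norm{\mathcal{P}^O_N u-u}_\alpha^2\le C N^{2(\alpha-r)}\sum_{\mathbf{m}}\abs{\mathbf{m}}^{2r}\abs{\hat{u}_{\mathbf{m}}}^2\le C N^{2(\alpha-r)}\abs{u}_r^2$, which is the claimed estimate after taking square roots. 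Note that the zero mode is automatically excluded from the tail, so the seminorm, rather than the full norm, suffices on the right-hand side.

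For the interpolation estimate I would split $\mathcal{P}^I_N u-u=(\mathcal{P}^I_N u-\mathcal{P}^O_N u)+(\mathcal{P}^O_N u-u)$; the second term is already controlled, so the task is to estimate the first, which lies in the finite-dimensional space $\mathcal{S}^I_N$. The key tool is the aliasing formula: the discrete interpolation coefficients satisfy $\tilde{u}_{\mathbf{m}}=\hat{u}_{\mathbf{m}}+\sum_{\mathbf{p}\ne\mathbf{0}}\hat{u}_{\mathbf{m}+N\mathbf{p}}$ for $\abs{j},\abs{k},\abs{l}\le N/2$, so that $\mathcal{P}^I_N u-\mathcal{P}^O_N u=\sum_{\abs{j},\abs{k},\abs{l}\le N/2}\big(\sum_{\mathbf{p}\ne\mathbf{0}}\hat{u}_{\mathbf{m}+N\mathbf{p}}\big)e^{\ii\mathbf{m}\cdot\mathbf{x}}$. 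Applying Cauchy--Schwarz to the inner sum against the weights $\abs{\mathbf{m}+N\mathbf{p}}^{\pm r}$ gives $\big|\sum_{\mathbf{p}\ne\mathbf{0}}\hat{u}_{\mathbf{m}+N\mathbf{p}}\big|^2\le\big(\sum_{\mathbf{p}\ne\mathbf{0}}\abs{\mathbf{m}+N\mathbf{p}}^{-2r}\big)\big(\sum_{\mathbf{p}\ne\mathbf{0}}\abs{\mathbf{m}+N\mathbf{p}}^{2r}\abs{\hat{u}_{\mathbf{m}+N\mathbf{p}}}^2\big)$. Since $\abs{j},\abs{k},\abs{l}\le N/2$ forces $\abs{\mathbf{m}+N\mathbf{p}}\ge \tfrac12 N\abs{\mathbf{p}}$ componentwise, the first factor is bounded by $CN^{-2r}\sum_{\mathbf{p}\ne\mathbf{0}}\abs{\mathbf{p}}^{-2r}$, and this series converges precisely because $2r>3$, i.e.\ $r>3/2$, which is the source of the regularity hypothesis. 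Combining with $(1+\abs{\mathbf{m}}^2)^\alpha\le CN^{2\alpha}$ on the retained block and summing over $\mathbf{m}$, the cross term is $\le CN^{2(\alpha-r)}\abs{u}_r^2$, and the triangle inequality completes the proof.

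The main obstacle is the interpolation half, specifically the aliasing analysis: one must justify that the pointwise interpolant is well defined (which needs the Sobolev embedding $H^r_p(\Omega)\hookrightarrow C(\Omega)$, valid for $r>3/2$ in three dimensions), derive the aliasing representation of $\tilde{u}_{\mathbf{m}}$, and, crucially, secure the convergence of $\sum_{\mathbf{p}\ne\mathbf{0}}\abs{\mathbf{p}}^{-2r}$, which is exactly the borderline condition $2r>3$. Establishing the lower bound $\abs{\mathbf{m}+N\mathbf{p}}\ge\tfrac12 N\abs{\mathbf{p}}$ uniformly over $\abs{j},\abs{k},\abs{l}\le N/2$ and $\mathbf{p}\ne\mathbf{0}$, with an $N$-independent constant, is the one place where a little care is required; by contrast, the orthogonal-projection half is entirely routine.
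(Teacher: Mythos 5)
Your proof is correct. There is nothing in the paper to compare it against: the lemma is quoted with the citation \cite{add3} (Canuto--Quarteroni) and no proof is given, and your two-step argument --- the spectral tail estimate for $\mathcal{P}^O_N$, then the splitting $\mathcal{P}^I_N\mathbf{u}-\mathbf{u}=(\mathcal{P}^I_N\mathbf{u}-\mathcal{P}^O_N\mathbf{u})+(\mathcal{P}^O_N\mathbf{u}-\mathbf{u})$ with the aliasing identity, Cauchy--Schwarz against the weights $\abs{\mathbf{m}+N\mathbf{p}}^{\pm r}$, the uniform bound $\abs{\mathbf{m}+N\mathbf{p}}\ge\tfrac12 N\abs{\mathbf{p}}$, and the convergence of $\sum_{\mathbf{p}\neq\mathbf{0}}\abs{\mathbf{p}}^{-2r}$ for $2r>3$ supplying exactly the hypothesis $r>3/2$ --- is precisely the classical proof found in that reference and in standard spectral-methods texts. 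Two points you gloss over, both harmless: the paper's space $\mathcal{S}^I_N$ carries the $c_jc_kc_l$ weights and the edge identifications $\hat{u}_{\frac{N}{2},k,l}=\hat{u}_{-\frac{N}{2},k,l}$ etc., so the aliasing representation must be stated with this convention, and as a consequence a given frequency $\mathbf{m}+N\mathbf{p}$ can be produced by up to $2^3$ pairs $(\mathbf{m},\mathbf{p})$ when some component satisfies $\abs{m_i}=N/2$ --- a bounded multiplicity absorbed into $C$. It is also worth recording explicitly (you note the analogous fact only for the projection half) that $\mathbf{m}+N\mathbf{p}=\mathbf{0}$ with $\abs{m_i}\le N/2$ forces $\mathbf{p}=\mathbf{0}$, so only nonzero frequencies enter the aliasing sum and the seminorm $\abs{\mathbf{u}}_r$, rather than the full norm, suffices on the right-hand side of the interpolation estimate as well.
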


%\begin{lem}
%If  $\mathbf{u}\in [H_p^{r}
%(\Omega)]^3$, $N>2$ and $r>3/2$ then  $ \norm{\mathcal{P}^O_{N-2} \mathbf{u}-\mathbf{u}}_{N}\leq C N^{-r}\abs{\mathbf{u}}_r$
%\end{lem}

\begin{theo} \label{con thm}\textbf{(Convergence.)}
Suppose that the exact solution    $\HH, \EE\in C^1(0,t_{end};[H^r_p(\Omega)]^3)$ and the  initial values  $\HH_0, \EE_0\in [H^r_p(\Omega)]^3$, where $r>3/2$ and the initial values are assumed to be bounded. Let {$\EE^{t_{end}}  = (\EE^{t_{end}}_{x}, \EE^{t_{end}}_y,\EE^{t_{end}}_z) ^{\intercal},\
   \HH^{t_{end}}= (\HH^{t_{end}}_x, \HH^{t_{end}}_y,\HH^{t_{end}}_z) ^{\intercal}$} be the solutions of the { scheme \ref{dIUA-PE}.}
Then, for any fixed $t_{end}$ there exists a positive constant $C$ independent of { $\Delta t, h_x,h_y,h_z, t_{end},N,\mu,\eps$  such that
\begin{equation*}%\label{matrixs}
  \begin{aligned}& \Big( \mu \norm{\HH^{t_{end}}-\HH(t_{end})}^2_N+ \eps \norm{\EE^{t_{end}}-\EE(t_{end})}^2_N\Big)^{\frac{1}{2}}\leq C (\sqrt{\mu}+\sqrt{\eps}) N^{-r},\end{aligned}
\end{equation*}}
%\begin{equation*}%\label{matrixs}
%  \begin{aligned}& \Big( \norm{\HH^{t_{end}}-\HH(t_{end})}^2_N+ \norm{\EE^{t_{end}}-\EE(t_{end})}^2_N\Big)^{\frac{1}{2}}\leq C N^{1-r},\end{aligned}
%\end{equation*}
where $N=N_x=N_y=N_z$.
%If we assume that  $\HH, \EE\in C(0,t_{end};[H^r_p(\Omega)]^3),\ r>5$ and the initial conditions are $\HH_0, \EE_0\in ([H^r_p(\Omega)]^3),\ r>5$, then

\end{theo}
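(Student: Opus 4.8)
The plan is to exploit the decisive structural feature of the scheme: the time integration in Step 3 of Definition \ref{dIUA-PE} is the \emph{exact} matrix exponential $e^{t_{end}\widehat{\Lambda}}$ of the semidiscrete system \eqref{maxreformFD1}, so there is no temporal discretisation error and the entire error is spatial. I would first pass to the symmetrised variable $u=(\sqrt{\mu}\HH,\sqrt{\eps}\EE)$, in which the quantity on the left-hand side of the estimate is exactly $\norm{u_N(t_{end})-u(t_{end})}_N$, where $u_N(t)$ denotes the (time-exact) semidiscrete solution whose value at $t_{end}$ is the scheme output. In this variable the propagator $e^{t\mathcal{C}}$ and its discrete analogue are unitary, a fact already recorded in the excerpt as $\norm{e^{t\widehat{\Lambda}}}=1$. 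I would then insert the orthogonal Fourier projection $\mathcal{P}^O_N$ and split
$$u(t)-u_N(t)=\big(u(t)-\mathcal{P}^O_N u(t)\big)+\big(\mathcal{P}^O_N u(t)-u_N(t)\big)=:\rho(t)+\eta(t).$$

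The term $\rho(t)$ is a pure truncation error: since $\HH,\EE\in C^1(0,t_{end};[H^r_p(\Omega)]^3)$, the projection estimate (the lemma attributed to \cite{add3}) gives $\norm{\rho(t)}_0\le C N^{-r}\big(\sqrt{\mu}\abs{\HH(t)}_r+\sqrt{\eps}\abs{\EE(t)}_r\big)$ \emph{uniformly} on $[0,t_{end}]$, which already furnishes the factor $(\sqrt{\mu}+\sqrt{\eps})N^{-r}$ and, crucially, is independent of $t_{end}$. For $\eta(t)\in[\mathcal{S}^O_N]^3$ I would invoke the commutation $\CU\mathcal{P}^O_N=\mathcal{P}^O_N\CU$ (the commutation lemma from \cite{n5}): applying $\mathcal{P}^O_N$ to \eqref{maxsys a} shows that $\mathcal{P}^O_N u$ solves the \emph{same} evolution $\partial_t(\mathcal{P}^O_N u)=\mathcal{C}\,\mathcal{P}^O_N u$ inside the finite-dimensional space with no consistency defect, because the problem is linear with constant coefficients and therefore free of the aliasing that normally separates collocation from Galerkin. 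Consequently $\eta$ satisfies the homogeneous discrete evolution $\partial_t\eta=\mathcal{C}_N\eta$, whose flow is unitary, so $\norm{\eta(t)}_0=\norm{\eta(0)}_0$ for every $t$. This reduces the whole estimate to the initial discrepancy $\eta(0)=\mathcal{P}^O_N u_0-\mathcal{P}^I_N u_0$ between the orthogonal projection and the interpolation of the data used in Step 2, which by the two bounds of the projection/interpolation lemma (the second valid since $r>3/2$) is again $O(N^{-r})$ and independent of $t_{end}$.

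Collecting the two pieces and passing from $\norm{\cdot}_0$ to the discrete norm $\norm{\cdot}_N$ through the equivalence in the first lemma of \cite{n5} yields the stated bound with a constant $C$ independent of $\Delta t,h_x,h_y,h_z,t_{end},N,\mu,\eps$; the $\mu,\eps$-independence is automatic because all estimates are carried out in the symmetrised variable, where $\mathcal{C}$ is skew-adjoint and its exponential is a contraction regardless of $\mu,\eps$. The step I expect to be the main obstacle is the rigorous justification that $\mathcal{P}^O_N u$ solves the discrete system with no defect, i.e.\ that the discrete curl $\DD$ (equivalently the diagonalised $\Lambda$, whose symbol $\Omega_w$ carries a zero in the Nyquist slot $N_w/2$) agrees with the true curl on $[\mathcal{S}^O_N]^3$ up to terms controlled by $N^{-r}$. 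Handling this single highest ($N/2$) mode is precisely the technical point that forces the hypothesis $r>3/2$: it is needed both for the interpolation estimate and to control the pointwise and Nyquist-mode contributions introduced by the collocation formulation.
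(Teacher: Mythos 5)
Your overall route is precisely the paper's: observe that the time propagation is the exact matrix exponential so the error is purely spatial, pass to the symmetrised variable $(\sqrt{\mu}\HH,\sqrt{\eps}\EE)$, insert an orthogonal Fourier projection of the exact solution, argue that the projected solution satisfies the semidiscrete evolution with no defect so that the error propagates unitarily, reduce everything to the initial discrepancy between projection and interpolant, and finish with the projection/interpolation estimates and the norm equivalence $\norm{\cdot}_0\leq\norm{\cdot}_N\leq 2\sqrt{2}\norm{\cdot}_0$. However, the step you yourself flag as the main obstacle is a genuine gap as written: with $\mathcal{P}^O_N$ the zero-defect claim is false. The issue is that $\mathcal{S}^O_N\not\subseteq \mathcal{S}^I_N$: for a real function the conjugate symmetry gives $\hat{u}_{-N/2,k,l}=\overline{\hat{u}_{N/2,k,l}}$, but $\mathcal{S}^I_N$ additionally demands $\hat{u}_{N/2,k,l}=\hat{u}_{-N/2,k,l}$, i.e.\ a \emph{real} Nyquist coefficient. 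The sine-Nyquist mode $\sin\big(\tfrac{N}{2}x\big)$ thus lies in $\mathcal{S}^O_N$ but not in $\mathcal{S}^I_N$; its nodal values vanish, $\sin\big(\tfrac{N}{2}x_j\big)=\sin(\pi(j-1))=0$, so the spectral differentiation matrix $D_x$ (whose diagonalised symbol $\Omega_x$ carries a zero in the $N/2$ slot) returns zero, while the true derivative $\tfrac{N}{2}\cos\big(\tfrac{N}{2}x\big)$ has nodal values $\pm\tfrac{N}{2}$. Hence $\DD$ does not reproduce $\CU$ on $[\mathcal{S}^O_N]^3$, and your $\eta$ actually obeys an \emph{inhomogeneous} discrete evolution with a defect of size $O(N^{1-r})$ (the aggregated Nyquist coefficients of an $H^r_p$ function are $O(N^{-r})$, and differentiation costs a factor $N/2$). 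A Duhamel patch then yields $\norm{\eta(t)}\leq\norm{\eta(0)}+C\,t\,N^{1-r}$, which loses one order of accuracy and, worse, destroys the independence of the constant on $t_{end}$ that the theorem asserts.

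The paper closes exactly this hole with a one-line device absent from your proposal: it projects onto a slightly \emph{smaller} space, setting $\HH^{*}=\mathcal{P}^O_{N-2}\HH$ and $\EE^{*}=\mathcal{P}^O_{N-2}\EE$. Since $[\mathcal{S}^O_{N-2}]^3\subseteq[\mathcal{S}^I_N]^3$ (all Nyquist slots are empty, so the symmetry constraints are vacuous), collocation differentiation is \emph{exact} on these projections, their nodal values satisfy the semidiscrete system \eqref{maxreformPP} with literally zero defect, and the error then propagates with exactly conserved discrete norm, as you intended. The rest of your argument goes through verbatim: the initial discrepancy $\mathcal{P}^I_N u_0-\mathcal{P}^O_{N-2}u_0$ is $O(N^{-r})$ by the triangle inequality and the two approximation lemmas (interpolation requiring $r>3/2$), and the final assembly costs only $(N-2)^{-r}\leq CN^{-r}$. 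So your proof is the paper's proof modulo the single replacement of $\mathcal{P}^O_N$ by $\mathcal{P}^O_{N-2}$, which converts the Nyquist obstacle you correctly identified from something needing to be ``controlled up to $N^{-r}$'' into something that vanishes identically --- and that is what rescues both the rate $N^{-r}$ and the uniformity in $t_{end}$.
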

\begin{proof}
Let $\EE^{*}=\mathcal{P}^O_{N-2}\EE,\
\HH^{*}=\mathcal{P}^O_{N-2}\HH.$ The {projections} of
Eqs. \eqref{Cauchyform} are written as
\begin{equation}\label{maxreformP} \frac{\partial}{\partial t}
\begin{pmatrix}
                               \sqrt{\mu} \HH^{*}   \\
                             \sqrt{\eps} \EE^{*}
                                 \end{pmatrix}
   = \frac{1}{\sqrt{\mu\eps}}\left(
       \begin{array}{cc}
         \mathbf{0} & - \CU\\
         \CU & \mathbf{0} \\
       \end{array}
     \right) \begin{pmatrix}
                               \sqrt{\mu} \HH^{*}   \\
                             \sqrt{\eps} \EE^{*}
                                 \end{pmatrix}.
\end{equation}
Noting that   $ \EE^{*}\in[\mathcal{S}^O_{N-2} ]^3\subseteq
[\mathcal{S}^I_N]^3 \subseteq [\mathcal{S}^O_N]^3,$ we
{obtain} that
\begin{equation*}
  \begin{aligned}
\frac{\partial }{\partial x}E^{*}_w (x_j ,y_k ,z_l)=&\frac{\partial }{\partial x}\mathcal{P}^I_{N}E^{*}_w (x_j ,y_k ,z_l)=\Big[\DD_1\EE^{*}_{w}\Big]_{N_xN_y(l-1)+N_x(k-1)+j},\\
\frac{\partial }{\partial y}E^{*}_w (x_j ,y_k ,z_l)=&\frac{\partial }{\partial y}\mathcal{P}^I_{N}E^{*}_w (x_j ,y_k ,z_l)=\Big[\DD_2\EE^{*}_{w}\Big]_{N_xN_y(l-1)+N_x(k-1)+j},\\
\frac{\partial }{\partial z}E^{*}_w (x_j ,y_k ,z_l)=&\frac{\partial }{\partial z}\mathcal{P}^I_{N}E^{*}_w (x_j ,y_k ,z_l)=\Big[\DD_3\EE^{*}_{w}\Big]_{N_xN_y(l-1)+N_x(k-1)+j},\\
  \end{aligned}
\end{equation*}
where {
$\DD_1= I_{N_z}  \otimes I_{N_y} \otimes  D_x,\
\DD_2= I_{N_z}  \otimes  D_y  \otimes  I_{N_x},\
\DD_3=D_z  \otimes I_{N_y} \otimes I_{N_x},
$ and}
\begin{equation*}
  \begin{aligned}\EE^{*}_{w}=(&E^{*}_{w,1,1,1}, E^{*}_{w,2,1,1},\ldots, E^{*}_{w,N_x,1,1},E^{*}_{w,1,2,1}, E^{*}_{w,2,2,1},\ldots, E^{*}_{w,N_x,2,1},\\
&\ldots,
E^{*}_{w,1,N_y,N_z}, E^{*}_{w,2,N_y,N_z},\ldots, E^{*}_{w,N_x,N_y,N_z}) ^{\intercal}\ \ \ \textmd{for}\ \ w=x,y,z.  \end{aligned}
\end{equation*}
Similar results are obvious for $ \HH^{*}$.
Thus Eqs. \eqref{maxreformP}  are transformed into
\begin{equation}\label{maxreformPP}
\frac{d}{d t}  \begin{pmatrix}
                               \sqrt{\mu} \widetilde{\HH}^{*}   \\
                             \sqrt{\eps} \widetilde{\EE}^{*}
                                 \end{pmatrix}
   =\mathcal{D}  \begin{pmatrix}
                               \sqrt{\mu} \widetilde{\HH}^{*}   \\
                             \sqrt{\eps} \widetilde{\EE}^{*}
                                 \end{pmatrix},
\end{equation}
with $\mathcal{D}=\frac{1}{\sqrt{\mu\eps}}\left(
       \begin{array}{cc}
         \mathbf{0} & - \DD\\
         \DD & \mathbf{0} \\
       \end{array}
     \right)$ and
$  \widetilde{\EE}^{*}  = (\EE^{*}_{x}, \EE^{*}_y,\EE^{*}_z) ^{\intercal},\
  {  \widetilde{\HH}^{*}}= (\HH^{*}_x, \HH^{*}_y,\HH^{*}_z) ^{\intercal}.$

   On the other hand,  the Fully discrete scheme \eqref{dIUA-PE} is equivalent to finding the numerical solution
  $(\widetilde{\HH}, \widetilde{\EE})^{\intercal} \in [\mathcal{S}^I_N]^6$ such that
   \begin{equation}\label{proo 1}
\Big\langle \frac{d}{d t}  \begin{pmatrix}
                               \sqrt{\mu}\widetilde{\HH}   \\
                             \sqrt{\eps}\widetilde{\EE}
                                 \end{pmatrix},\left(
                                                 \begin{array}{c}
                                                   \vec{\mu} \\
                                                   \vec{\nu} \\
                                                 \end{array}
                                               \right)
                                 \Big\rangle_{N}
   = \Big\langle \mathcal{D}\begin{pmatrix}
                               \sqrt{\mu} \widetilde{\HH}   \\
                             \sqrt{\eps}\widetilde{\EE}
                                 \end{pmatrix},\left(
                                                 \begin{array}{c}
                                                   \vec{\mu} \\
                                                   \vec{\nu} \\
                                                 \end{array}
                                               \right)
                                 \Big\rangle_{N}
\end{equation}
for all  $(  \vec{\mu},  \vec{\nu})^{\intercal} \in [\mathcal{S}^I_N]^6$.

 Denote the errors $\mathcal{H}^t= \widetilde{\HH}(t)-   {  \widetilde{\HH}^{*}}(t)$ and $\mathcal{E}^t =\widetilde{\EE}(t)- {  \widetilde{\EE}^{*}}(t)$.
 Based on the formulae \eqref{maxreformPP}-\eqref{proo 1}, it is {clear} that
   {  \begin{equation*}%\label{proo 2}
\Big\langle \frac{d}{d t}  \begin{pmatrix}
                               \sqrt{\mu}\mathcal{H}^t   \\
                             \sqrt{\eps}\mathcal{E}^t
                                 \end{pmatrix},\left(
                                                 \begin{array}{c}
                                                   \vec{\mu} \\
                                                   \vec{\nu} \\
                                                 \end{array}
                                               \right)
                                 \Big\rangle_{N}
   = \Big\langle\mathcal{D}\begin{pmatrix}
                               \sqrt{\mu} \mathcal{H}^t  \\
                             \sqrt{\eps} \mathcal{E}^t
                                 \end{pmatrix},\left(
                                                 \begin{array}{c}
                                                   \vec{\mu} \\
                                                   \vec{\nu} \\
                                                 \end{array}
                                               \right)
                                 \Big\rangle_{N},
\end{equation*}}
 which is
    {  \begin{equation*}%\label{proo 3}
\Big\langle   \begin{pmatrix}
                               \sqrt{\mu}\mathcal{H}^t   \\
                             \sqrt{\eps}\mathcal{E}^t
                                 \end{pmatrix},\left(
                                                 \begin{array}{c}
                                                   \vec{\mu} \\
                                                   \vec{\nu} \\
                                                 \end{array}
                                               \right)
                                 \Big\rangle_{N}
   = \Big\langle e^{\mathcal{D} }\begin{pmatrix}
                               \sqrt{\mu} \mathcal{H}^0  \\
                             \sqrt{\eps} \mathcal{E}^0
                                 \end{pmatrix},\left(
                                                 \begin{array}{c}
                                                   \vec{\mu} \\
                                                   \vec{\nu} \\
                                                 \end{array}
                                               \right)
                                 \Big\rangle_{N}.
\end{equation*}}
 Taking $\left(
                                                 \begin{array}{c}
                                                   \vec{\mu} \\
                                                   \vec{\nu} \\
                                                 \end{array}
                                               \right)=\begin{pmatrix}
                               \sqrt{\mu}\mathcal{H}^t   \\
                             \sqrt{\eps}\mathcal{E}^t
                                 \end{pmatrix}+e^{\mathcal{D}}\begin{pmatrix}
                               \sqrt{\mu} \mathcal{H}^0  \\
                             \sqrt{\eps} \mathcal{E}^0
                                 \end{pmatrix}$
                                 leads to
          \begin{equation*}%\label{matrixs}
  \begin{aligned} 0=&\norm{\begin{pmatrix}
                               \sqrt{\mu}\mathcal{H}^t   \\
                             \sqrt{\eps}\mathcal{E}^t
                                 \end{pmatrix}}_N^2-\norm{e^{\mathcal{D}}\begin{pmatrix}
                               \sqrt{\mu} \mathcal{H}^0  \\
                             \sqrt{\eps} \mathcal{E}^0
                                 \end{pmatrix}}_N^2+\Big\langle   \begin{pmatrix}
                               \sqrt{\mu}\mathcal{H}^t   \\
                             \sqrt{\eps}\mathcal{E}^t
                                 \end{pmatrix},e^{\mathcal{D} }\begin{pmatrix}
                               \sqrt{\mu} \mathcal{H}^0  \\
                             \sqrt{\eps} \mathcal{E}^0
                                 \end{pmatrix}
                                 \Big\rangle_{N}\\&-\Big\langle e^{\mathcal{D} }\begin{pmatrix}
                               \sqrt{\mu} \mathcal{H}^0  \\
                             \sqrt{\eps} \mathcal{E}^0
                                 \end{pmatrix},  \begin{pmatrix}
                               \sqrt{\mu}\mathcal{H}^t   \\
                             \sqrt{\eps}\mathcal{E}^t
                                 \end{pmatrix}
                                 \Big\rangle_{N}
                           = \norm{\begin{pmatrix}
                               \sqrt{\mu}\mathcal{H}^t   \\
                             \sqrt{\eps}\mathcal{E}^t
                                 \end{pmatrix}}_N^2-\norm{\begin{pmatrix}
                               \sqrt{\mu} \mathcal{H}^0  \\
                             \sqrt{\eps} \mathcal{E}^0
                                 \end{pmatrix}}_N^2     .\end{aligned}
\end{equation*}
This shows that $\mu \norm{ \mathcal{H}^t}_N^2+\eps \norm{ \mathcal{E}^t}_N^2=\mu \norm{ \mathcal{H}^0}_N^2+\eps \norm{ \mathcal{E}^0}_N^2.$

In what follows, we estimate $\norm{ \mathcal{H}^0}_N^2$   and      $\norm{ \mathcal{E}^0}_N^2$.
For $ \mathcal{H}^0=\widetilde{\HH}(0)- \mathcal{P}^O_{N-2}\HH(0)\in [\mathcal{S}^I_N]^3$, we transform the norm by using the result
$\norm{\mathcal{H}^0}_0\leq \norm{\mathcal{H}^0}_N \leq  2\sqrt{2}\norm{\mathcal{H}^0}_0$ and then study the bound of $\norm{\mathcal{H}^0}_0$.
To this end, we compute
       \begin{equation*}%\label{matrixs}
  \begin{aligned}\norm{\mathcal{H}^0}_0=&\norm{\widetilde{\HH}(0)- \mathcal{P}^O_{N-2}\HH(0)}_0=
  \norm{ \mathcal{P}^I_N(0)\HH(0)- \mathcal{P}^O_{N-2}\HH(0)}_0\\\leq&
    \norm{ \mathcal{P}^I_N(0)\HH(0)-\HH(0)}_0+  \norm{\mathcal{P}^O_{N-2}\HH(0)-\HH(0)}_0\leq C N^{-r}.\end{aligned}
\end{equation*}
Similar result $\norm{\mathcal{E}^0}_N \leq  2\sqrt{2}\norm{\mathcal{E}^0}_0 \leq C N^{-r}$ can be obtained. Therefore,
{$\Big(\mu \norm{ \mathcal{H}^t}_N^2+\eps \norm{ \mathcal{E}^t}_N^2\Big)^{\frac{1}{2}}  \leq    C (\sqrt{\mu}+\sqrt{\eps}) N^{-r}.$}
% Comparing \eqref{maxreformPP}  with \eqref{maxreformFD}, it is arrived at
% $$\widetilde{\EE}^{*}=\widetilde{\EE},\ \ \ \widetilde{\HH}^{*}=\widetilde{\HH}.$$

  { With these estimates, we} are now {in a position} to present the error
  { \begin{equation*}%\label{matrixs}
  \begin{aligned}& \Big(\mu \norm{\HH^{t_{end}}-\HH(t_{end})}^2_N+ \eps\norm{\EE^{t_{end}}-\EE(t_{end})}^2_N\Big)^{\frac{1}{2}}\\
  \leq&
   \Big(\mu \norm{\widetilde{\HH}(t_{end})-\HH(t_{end})}^2_N+ \eps\norm{\widetilde{\EE}(t_{end})-\EE(t_{end})}^2_N\Big)^{\frac{1}{2}}\\
    \leq &  \Big(\mu \norm{ \mathcal{H}^{t_{end}}+\mathcal{P}^O_{N-2}\HH(t_{end}) -\HH(t_{end})}^2_N+ \eps \norm{\mathcal{E}^{t_{end}}+\mathcal{P}^O_{N-2}\EE(t_{end}) -\EE(t_{end})}^2_N\Big)^{\frac{1}{2}}\\
        \leq & \sqrt{\mu} \norm{ \mathcal{H}^{t_{end}}+\mathcal{P}^O_{N-2}\HH(t_{end}) -\HH(t_{end})}_N+ \sqrt{\eps} \norm{\mathcal{E}^{t_{end}}+\mathcal{P}^O_{N-2}\EE(t_{end}) -\EE(t_{end})}_N\\
       \leq &    \sqrt{\mu}\norm{ \mathcal{P}^O_{N-2}\HH(t_{end}) -\HH(t_{end})}_N+ \sqrt{\eps}\norm{\mathcal{P}^O_{N-2}\EE(t_{end}) -\EE(t_{end})}_N + C(\sqrt{\mu}+\sqrt{\eps}) N^{-r} \\
       \leq &  C(\sqrt{\mu}+\sqrt{\eps}) N^{-r},\end{aligned}
\end{equation*}}
where we have used the boundedness \eqref{bound solu} of the solution to get
   {\begin{equation*}%\label{matrixs}
  \begin{aligned}&  \sqrt{\mu}\norm{ \mathcal{P}^O_{N-2}\HH(t_{end}) -\HH(t_{end})}_N+ \sqrt{\eps}\norm{\mathcal{P}^O_{N-2}\EE(t_{end}) -\EE(t_{end})}_N\\ \leq&  2\sqrt{2}\Big(\sqrt{\mu}\norm{ \mathcal{P}^O_{N-2}\HH(t_{end}) -\HH(t_{end})}_0+ \sqrt{\eps}\norm{\mathcal{P}^O_{N-2}\EE(t_{end}) -\EE(t_{end})}_0\Big) \\
  \leq& C(\sqrt{\mu}+\sqrt{\eps})   (N-2)^{-r}.\end{aligned}
\end{equation*}}\hfill
\end{proof}
\begin{rem}
It is noted that  the scheme is of spectral accuracy in space and infinite-order accuracy in time. For the
  Maxwell's equations  with enough smoothness solutions,   the  scheme will converge with infinite-order accuracy both in space and in time.
\end{rem}
 \section{Structure preserving laws}\label{sec:4}
 In this section, we rigorously prove the discrete
structure preserving laws of the proposed scheme including the energy, helicity, momentum, symplecticity, and divergence-free field conservation laws.

\begin{theo} \label{ene law thm} %For the numerical methods applied to  \eqref{2scale Fourier}, we consider two symmetric exponential-type integrators.
\textbf{(Energy conservation laws.)}
The solutions { $\EE^{t_{end}},  \HH^{t_{end}}$} produced by the fully discrete  scheme \ref{dIUA-PE} satisfy the
discrete energy conservation laws
\begin{equation*}%\label{matrixs}
  \begin{aligned}& \mathcal{E}_1^{t_{end}}=\mathcal{E}_1^{0},\ \ \mathcal{E}_2^{t_{end}}=\mathcal{E}_2^{0},\ \ \mathcal{E}_3^{t_{end}}=\mathcal{E}_3^{0},\ \ \mathcal{E}_4^{t_{end}}=\mathcal{E}_4^{0},  \ \ \mathcal{E}_5^{t_{end}}=\mathcal{E}_5^{0},\
  \ \mathcal{E}_6^{t_{end}}=\mathcal{E}_6^{0},\end{aligned}
\end{equation*}
where
\begin{equation*}%\label{matrixs}
  \begin{aligned}\mathcal{E}_1^{t}=&\frac{\mu}{2}  \langle\HH^{t},  \HH^{t}\rangle_N+\frac{\eps}{2}  \langle\EE^{t},  \EE^{t}\rangle_N,\ \qquad
   \mathcal{E}_2^{t}=\frac{\mu}{2}  \Big\langle \frac{d}{dt} \HH^{t},  \frac{d}{dt} \HH^{t}\Big\rangle_N+\frac{\eps}{2} \Big\langle\frac{d}{dt} \EE^{t},  \frac{d}{dt} \EE^{t}\Big\rangle_N,\\
      \mathcal{E}_3^{t}=&\frac{\mu}{2} \sum_{w=x,y,z}\langle\DD_k\HH_w^{t},\DD_k \HH_w^{t}\rangle_N+
\frac{\eps}{2} \sum_{w=x,y,z} \langle\DD_k\EE_w^{t},\DD_k \EE_w^{t}\rangle_N\ \ \ \textmd{for}\ \ \ k=1,2,3,\\
   \mathcal{E}_4^{t}=&\frac{\mu}{2} \sum_{w=x,y,z}\langle\DD_k\frac{d}{dt}\HH_w^{t},\DD_k \frac{d}{dt}\HH_w^{t}\rangle_N+
\frac{\eps}{2} \sum_{w=x,y,z} \langle\DD_k\frac{d}{dt}\EE_w^{t},\DD_k \frac{d}{dt}\EE_w^{t}\rangle_N\ \ \ \textmd{for}\ \ \ k=1,2,3,\\
    \mathcal{E}_5^{t}=& \frac{\mu}{2} \sum_{w=x,y,z}\langle\HH_w^{t},\DD_k \HH_w^{t}\rangle_N+
\frac{\eps}{2} \sum_{w=x,y,z} \langle\EE_w^{t},\DD_k \EE_w^{t}\rangle_N\ \ \ \textmd{for}\ \ \ k=1,2,3,\\
     \mathcal{E}_6^{t}=& \frac{\mu}{2}\sum_{w=x,y,z}\Big\langle  \frac{d}{dt} \HH_w^{t},  \DD_k \frac{d}{dt} \HH_w^{t}\Big\rangle_N+
\frac{\eps}{2}\sum_{w=x,y,z}\Big\langle \frac{d}{dt} \EE_w^{t},  \DD_k  \frac{d}{dt} \EE_w^{t}\Big\rangle_N\ \ \ \textmd{for}\ \ \ k=1,2,3,
    \end{aligned}
\end{equation*}
with the inner product  $\langle\cdot,\cdot\rangle_N$  and the notations
$\DD_1= I_{N_z}  \otimes I_{N_y} \otimes  D_x,\
\DD_2= I_{N_z}  \otimes  D_y  \otimes  I_{N_x},\
\DD_3=D_z  \otimes I_{N_y} \otimes I_{N_x}.
$
\end{theo}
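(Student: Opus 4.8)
The plan is to pass to the diagonalised Fourier variables of \eqref{maxreformFD1}, where the numerical flow is the unitary group $e^{t\widehat{\Lambda}}$ already analysed in Propositions \ref{pro1}--\ref{pro2}, and to recognise each of $\mathcal{E}_1^t,\dots,\mathcal{E}_6^t$ as a fixed bilinear form in the state vector that is preserved by this group. Write $\widehat{\UU}^t=(\sqrt{\mu}\widehat{\HH}^t,\sqrt{\eps}\widehat{\EE}^t)^{\intercal}\in\mathbb{C}^{6N_S}$, so that $\widehat{\UU}^t=e^{t\widehat{\Lambda}}\widehat{\UU}^0$ and, since $e^{t\widehat{\Lambda}}$ is a one-parameter group, $\tfrac{d}{dt}\widehat{\UU}^t=\widehat{\Lambda}\widehat{\UU}^t$; in particular the time derivatives entering $\mathcal{E}_2,\mathcal{E}_4,\mathcal{E}_6$ are unambiguous and amount to left multiplication by $\widehat{\Lambda}$. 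First I would record a discrete Plancherel identity: because $\mathcal{F}_{N_w}^{*}\mathcal{F}_{N_w}=N_w I$, the transform $\mathcal{F}$ is a positive scalar multiple of a unitary map, so $\langle u,v\rangle_N=c_N\langle\widehat{u},\widehat{v}\rangle_{\mathbb{C}}$ for a fixed $c_N>0$, where $\langle\cdot,\cdot\rangle_{\mathbb{C}}$ is the Euclidean inner product on the DFT coefficients. Moreover, as established before \eqref{maxreformFD1}, each $\DD_k$ is carried by $\mathcal{F}$ into multiplication by the diagonal symbol $\ii\Omega_k$.

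Second, using these two facts I would express every energy, up to the harmless factor $c_N$, as $\tfrac12\langle\mathcal{A}_i\widehat{\UU}^t,\mathcal{B}_i\widehat{\UU}^t\rangle_{\mathbb{C}}$ with operators
\begin{equation*}
\mathcal{A}_1=\mathcal{B}_1=I,\quad \mathcal{A}_2=\mathcal{B}_2=\widehat{\Lambda},\quad \mathcal{A}_3=\mathcal{B}_3=\mathcal{G}_k,\quad \mathcal{A}_4=\mathcal{B}_4=\mathcal{G}_k\widehat{\Lambda},\quad \mathcal{A}_5=I,\ \mathcal{B}_5=\mathcal{G}_k,\quad \mathcal{A}_6=\widehat{\Lambda},\ \mathcal{B}_6=\mathcal{G}_k\widehat{\Lambda},
\end{equation*}
where $\mathcal{G}_k=\mathrm{diag}(\ii\Omega_k,\ii\Omega_k,\ii\Omega_k,\ii\Omega_k,\ii\Omega_k,\ii\Omega_k)$ applies the $k$-th derivative symbol to each of the six field components. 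The identification is immediate from the definitions of the $\mathcal{E}_i^t$ once the sum over $w=x,y,z$ is grouped into the full vector $\widehat{\UU}^t$ and the weights $\mu/2,\eps/2$ are absorbed into $\sqrt{\mu}\widehat{\HH},\sqrt{\eps}\widehat{\EE}$.

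Third, the conservation follows from a single commutation fact together with unitarity. Both $\mathcal{G}_k$ and $\widehat{\Lambda}$ are assembled from the mutually commuting diagonal symbols $\Omega_1,\Omega_2,\Omega_3$ (for $\widehat{\Lambda}$ through the block $\Lambda$), and operators that are diagonal in the Fourier basis commute; hence $\mathcal{G}_k\widehat{\Lambda}=\widehat{\Lambda}\mathcal{G}_k$ and every $\mathcal{A}_i,\mathcal{B}_i$ commutes with $\widehat{\Lambda}$, and therefore with $e^{t\widehat{\Lambda}}=\sum_{n\ge0}t^n\widehat{\Lambda}^n/n!$. Writing $\widehat{\UU}^{t}=e^{t\widehat{\Lambda}}\widehat{\UU}^0$ and using $(e^{t\widehat{\Lambda}})^{*}=(e^{t\widehat{\Lambda}})^{-1}$ I would then compute
\begin{equation*}
\langle\mathcal{A}_i\widehat{\UU}^{t},\mathcal{B}_i\widehat{\UU}^{t}\rangle_{\mathbb{C}}=\langle e^{t\widehat{\Lambda}}\mathcal{A}_i\widehat{\UU}^{0},e^{t\widehat{\Lambda}}\mathcal{B}_i\widehat{\UU}^{0}\rangle_{\mathbb{C}}=\langle\mathcal{A}_i\widehat{\UU}^{0},\mathcal{B}_i\widehat{\UU}^{0}\rangle_{\mathbb{C}},
\end{equation*}
so that $\mathcal{E}_i^{t_{end}}=\mathcal{E}_i^{0}$ for $i=1,\dots,6$ and every $k\in\{1,2,3\}$. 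The computation applies verbatim to the skew (helicity-type) forms $\mathcal{E}_5,\mathcal{E}_6$, the only difference being that there $\mathcal{A}_i\neq\mathcal{B}_i$.

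I expect the only non-routine step to be the commutation $\mathcal{G}_k\widehat{\Lambda}=\widehat{\Lambda}\mathcal{G}_k$: one must verify that applying a single spatial-derivative symbol to all six components commutes with the Maxwell propagator despite the off-diagonal coupling and the sign pattern in $\Lambda$, which is exactly where the Kronecker and diagonal structure of $\Omega_1,\Omega_2,\Omega_3$ is used. The Plancherel bookkeeping (the constant $c_N$ and the prime-summation convention of $\mathcal{S}^I_N$), the identification of the time derivative with $\widehat{\Lambda}$, and the observation that $\mathcal{E}_5,\mathcal{E}_6$ actually vanish for real data by the conjugate symmetry $\widehat{u}_{-m}=\overline{\widehat{u}_m}$ are routine and would not be belaboured.
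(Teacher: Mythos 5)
Your proposal is correct, and it reaches the theorem by a genuinely different route from the paper's. The paper never leaves the physical collocation variables: it differentiates each $\mathcal{E}_i^{t}$ along the semidiscrete flow \eqref{hamFD} and shows the derivative vanishes using the symmetry $\DD^{\intercal}=\DD$, the skew-symmetry $\textbf{B}_k^{\intercal}=-\textbf{B}_k$, and the commutation $\textbf{B}_k\DD=\DD\textbf{B}_k$ (verified by hand through the Kronecker structure); for $\mathcal{E}_3,\mathcal{E}_4$ it additionally left-multiplies \eqref{hamFD} by $\textmd{diag}(\textbf{B}_k,\textbf{B}_k)$ so that $(\textbf{B}_k\HH^{t},\textbf{B}_k\EE^{t})$ satisfies the same system, then reuses the $\mathcal{E}_1,\mathcal{E}_2$ argument. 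You instead diagonalise first and derive all six laws from one template: unitarity of $e^{t\widehat{\Lambda}}$ (the paper has already recorded that $\widehat{\Lambda}$ is skew-Hermitian) together with $\mathcal{G}_k\widehat{\Lambda}=\widehat{\Lambda}\mathcal{G}_k$, which indeed holds blockwise because every block of $\widehat{\Lambda}$ is $\mathbf{0}$ or $\pm\frac{1}{\sqrt{\mu\eps}}\ii\Omega_w$ and diagonal matrices commute --- so your one nontrivial ingredient is precisely the Fourier image of the paper's $\textbf{B}_k\DD=\DD\textbf{B}_k$. What your route buys: conservation comes out globally ($\mathcal{E}_i^{t}=\mathcal{E}_i^{0}$ in one line) rather than through an infinitesimal $\frac{d}{dt}$ computation, and it is insensitive to the symmetric-versus-skew bookkeeping that the derivative computations require; this matters for $\mathcal{E}_5$, where the paper's displayed chain differentiates $\frac12\,\HH^{\intercal}\textbf{B}_k\HH$ as if $\textbf{B}_k$ were symmetric and its final two terms are in fact equal rather than opposite (since $(\DD\textbf{B}_k)^{\intercal}=-\DD\textbf{B}_k$), whereas the correctly symmetrised derivative vanishes trivially by skewness --- consistent with your observation that $\mathcal{E}_5,\mathcal{E}_6$ are identically zero for real data. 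What the paper's route buys: it works with the real vectors and the definitions of the energies verbatim, with no Plancherel constant, no conjugate-symmetry bookkeeping, and no need to confirm that $\mathcal{F}$ carries $\DD_k$ to the symbol $\ii\Omega_k$. Your two ``routine'' provisos (the $c_N$ normalisation from $\mathcal{F}_{N_w}^{*}\mathcal{F}_{N_w}=N_wI$, and reading the time derivatives in $\mathcal{E}_2,\mathcal{E}_4,\mathcal{E}_6$ as multiplication by $\widehat{\Lambda}$, which the paper's remark after the theorem makes explicit) are unproblematic, so your argument is complete once the flagged commutation is checked as above.
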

\begin{proof}
Based on the formulation of fully discrete { scheme \ref{dIUA-PE}}, it is known that    $ \EE^{t_{end}}$ and $ \HH^{t_{end}}$ satisfy
\begin{equation}\label{hamFD}
\frac{d}{d t}  \begin{pmatrix}
                               \sqrt{\mu}\HH^{t_{end}}  \\
                             \sqrt{\eps}\EE^{t_{end}}
                                 \end{pmatrix}
   = \mathcal{D} \begin{pmatrix}
                               \sqrt{\mu} \HH^{t_{end}}   \\
                             \sqrt{\eps}\EE^{t_{end}}
                                 \end{pmatrix}.
\end{equation}
Therefore, we have that
\begin{equation*}%\label{hamFD}
  \begin{aligned}
\frac{d}{d t} \mathcal{E}_1^{t_{end}}=& \mu (\HH^{t_{end}}) ^{\intercal} \dot{\HH}^{t_{end}} + \eps   (\EE^{t_{end}})^{\intercal} \dot{\EE}^{t_{end}}
=  (\HH^{t_{end}}) ^{\intercal} (-\DD \EE^{t_{end}}) + (\EE^{t_{end}})^{\intercal} (\DD \HH^{t_{end}}).
    \end{aligned} \end{equation*}
Using the property that $\DD^{\intercal}=\DD$, one gets
 \begin{equation*}%\label{hamFD}
  \begin{aligned}
\frac{d}{d t} \mathcal{E}_1^{t_{end}}
=  (\HH^{t_{end}}) ^{\intercal} (-\DD \EE^{t_{end}}) +\Big( (\EE^{t_{end}})^{\intercal} (\DD \HH^{t_{end}})\Big)^{\intercal}= (\HH^{t_{end}}) ^{\intercal} (-\DD \EE^{t_{end}}) + (\HH^{t_{end}})^{\intercal} (\DD \EE^{t_{end}})=0.
    \end{aligned} \end{equation*}

    From \eqref{hamFD}, it follows that
$
\frac{d^2}{d t^2}  \begin{pmatrix}
                               \sqrt{\mu}\HH^{t_{end}}  \\
                             \sqrt{\eps}\EE^{t_{end}}
                                 \end{pmatrix}
   = \mathcal{D} \frac{d}{d t}  \begin{pmatrix}
                               \sqrt{\mu} \HH^{t_{end}}   \\
                             \sqrt{\eps}\EE^{t_{end}}
                                 \end{pmatrix}%=\frac{-1}{\mu\eps}\left(
%       \begin{array}{cc}
%         \DD^2 & \mathbf{0} \\
%         \mathbf{0} & \DD^2 \\
%       \end{array}
%     \right)  \begin{pmatrix}
%                               \sqrt{\mu} \HH^{t_{end}}   \\
%                             \sqrt{\eps}\EE^{t_{end}}
%                                 \end{pmatrix}
$ {and hence}
\begin{equation*}%\label{hamFD}
  \begin{aligned}
\frac{d}{d t} \mathcal{E}_2^{t_{end}}=& \mu (\dot{\HH}^{t_{end}}) ^{\intercal} \ddot{\HH}^{t_{end}} + \eps   (\dot{\EE}^{t_{end}})^{\intercal} \ddot{\EE}^{t_{end}}
=(\dot{\HH}^{t_{end}}) ^{\intercal} (-\DD \dot{\EE}^{t_{end}}) +\Big( (\dot{\EE}^{t_{end}})^{\intercal} (\DD \dot{\HH}^{t_{end}})\Big)^{\intercal}\\
=& (\dot{\HH}^{t_{end}}) ^{\intercal} (-\DD \dot{\EE}^{t_{end}}) + (\dot{\HH}^{t_{end}})^{\intercal} (\DD \dot{\EE}^{t_{end}})=0.
    \end{aligned} \end{equation*}

  For the block diagonal matrices $\textbf{B}_k=\left(
                                                     \begin{array}{ccc}
                                                       \DD_k &   &   \\
                                                         & \DD_k &   \\
                                                         &   & \DD_k \\
                                                     \end{array}
                                                   \right),$
                                                   it is easy to see that
      \begin{equation*}%\label{hamFD}
  \begin{aligned}
\textbf{B}_k\DD=&\left(
                                                     \begin{array}{ccc}
                                                       \DD_k &   &   \\
                                                         & \DD_k &   \\
                                                         &   & \DD_k \\
                                                     \end{array}
                                                   \right)\left(
        \begin{array}{ccc}
          \textbf{0} & -D_z  \otimes I_{N_y} \otimes I_{N_x} & I_{N_z}  \otimes  D_y  \otimes  I_{N_x}\\
         D_z  \otimes I_{N_y} \otimes I_{N_x} & \textbf{0} & -I_{N_z}  \otimes I_{N_y} \otimes  D_x\\
          -I_{N_z}  \otimes  D_y  \otimes  I_{N_x}  &I_{N_z}  \otimes I_{N_y} \otimes  D_x & \textbf{0} \\
        \end{array}
      \right)\\
%      =& \left(
%        \begin{array}{ccc}
%          \textbf{0} & -\DD_k D_z  \otimes I_{N_y} \otimes I_{N_x} & I_{N_z}  \otimes  D_y  \otimes  I_{N_x}\\
%        \DD_k D_z  \otimes I_{N_y} \otimes I_{N_x} & \textbf{0} & -\DD_k I_{N_z}  \otimes I_{N_y} \otimes  D_x\\
%          - \DD_k I_{N_z}  \otimes  D_y  \otimes  I_{N_x}  &\DD_k I_{N_z}  \otimes I_{N_y} \otimes  D_x & \textbf{0} \\
%        \end{array}
%      \right)\\
      =&\left(
        \begin{array}{ccc}
          \textbf{0} & -D_z  \otimes I_{N_y} \otimes I_{N_x}\DD_k & I_{N_z}  \otimes  D_y  \otimes  I_{N_x}\DD_k\\
         D_z  \otimes I_{N_y} \otimes I_{N_x}\DD_k & \textbf{0} & -I_{N_z}  \otimes I_{N_y} \otimes  D_x\DD_k\\
          -I_{N_z}  \otimes  D_y  \otimes  I_{N_x} \DD_k &I_{N_z}  \otimes I_{N_y} \otimes  D_x \DD_k & \textbf{0} \\
        \end{array}
      \right)=\DD \textbf{B}_k,
%                                                   =&\left(
%        \begin{array}{ccc}
%          \textbf{0} & -D_z  \otimes I_{N_y} \otimes I_{N_x} & I_{N_z}  \otimes  D_y  \otimes  I_{N_x}\\
%         D_z  \otimes I_{N_y} \otimes I_{N_x} & \textbf{0} & -I_{N_z}  \otimes I_{N_y} \otimes  D_x\\
%          -I_{N_z}  \otimes  D_y  \otimes  I_{N_x}  &I_{N_z}  \otimes I_{N_y} \otimes  D_x & \textbf{0} \\
%        \end{array}
%      \right)\left(
%                                                     \begin{array}{ccc}
%                                                       \DD_k &   &   \\
%                                                         & \DD_k &   \\
%                                                         &   & \DD_k \\
%                                                     \end{array}
%                                                   \right),
    \end{aligned} \end{equation*}
 where we have used     the
commutative law of $\DD_k$ which can be shown for $\DD_2 \DD_1$ as follows:
          \begin{equation*}%\label{hamFD}
  \begin{aligned}
\DD_2 \DD_1 =& ( I_{N_z}  \otimes  D_y  \otimes  I_{N_x})(I_{N_z}  \otimes I_{N_y} \otimes  D_x) \\
=&  I_{N_z}  \otimes \big((D_y  \otimes  I_{N_x})(  I_{N_y} \otimes  D_x)\big) =I_{N_z}  \otimes D_y   \otimes  D_x= \DD_1  \DD_2.
\end{aligned} \end{equation*}
      Then, left-multiplying \eqref{hamFD} with block diagonal matrix $\textmd{diag}(\textbf{B}_k, \textbf{B}_k)$, we have
      \begin{equation}\label{hamFD-1}
\frac{d}{d t}  \begin{pmatrix}
                               \sqrt{\mu}\textbf{B}_k \HH^{t_{end}}  \\
                             \sqrt{\eps}\textbf{B}_k \EE^{t_{end}}
                                 \end{pmatrix}
                                    = \frac{1}{\sqrt{\mu\eps}}\left(
       \begin{array}{cc}
         \mathbf{0} & -\textbf{B}_k \DD\\
         \textbf{B}_k \DD & \mathbf{0} \\
       \end{array}
     \right) \begin{pmatrix}
                               \sqrt{\mu} \HH^{t_{end}}   \\
                             \sqrt{\eps}  \EE^{t_{end}}
                                 \end{pmatrix}
   = \frac{1}{\sqrt{\mu\eps}}\left(
       \begin{array}{cc}
         \mathbf{0} & - \DD\\
         \DD & \mathbf{0} \\
       \end{array}
     \right) \begin{pmatrix}
                               \sqrt{\mu}\textbf{B}_k \HH^{t_{end}}   \\
                             \sqrt{\eps}\textbf{B}_k \EE^{t_{end}}
                                 \end{pmatrix}.
\end{equation}
Based on this scheme and the same arguments as  $  \mathcal{E}_1^{t_{end}}$, it is obtained that  $\frac{d}{d t} \mathcal{E}_3^{t_{end}}=0.$

The  statement  of $\mathcal{E}_4^{t_{end}}$    can be proved by combining the proofs of $\mathcal{E}_2^{t_{end}}$ and $\mathcal{E}_3^{t_{end}}$.

   For the energy $ \mathcal{E}_5^{t_{end}}$,  it is deduced that
    \begin{equation*}%\label{hamFD}
  \begin{aligned}
\frac{d}{d t} \mathcal{E}_5^{t_{end}}=& \mu    \sum_{w=x,y,z} (\HH_w^{t_{end}}) ^{\intercal}\DD_k  \dot{\HH}_w^{t_{end}} + \eps    \sum_{w=x,y,z} (\EE_w^{t_{end}}) ^{\intercal}\DD_k  \dot{\EE}_w^{t_{end}}\\
=&\mu     (\HH^{t_{end}}) ^{\intercal}\textbf{B}_k  \dot{\HH}^{t_{end}} + \eps      (\EE^{t_{end}}) ^{\intercal}\textbf{B}_k \dot{\EE}^{t_{end}}
=    - (\HH^{t_{end}}) ^{\intercal}\DD\textbf{B}_k \EE^{t_{end}} +      (\EE^{t_{end}}) ^{\intercal}\DD\textbf{B}_k \HH^{t_{end}}=0.
    \end{aligned} \end{equation*}

The last result of $\mathcal{E}_6^{t_{end}}$  can be proved in a
similar way {to that stated above.} \hfill
\end{proof}

\begin{rem}
It is noted that the first derivatives of $\HH^{t_{end}}, \EE^{t_{end}}$ are needed in the results and they are obtained by  \begin{equation*}
 \dot{\EE}^{t_{end}}_w=\big(\mathcal{F}^{-1}_{N_z}  \otimes \mathcal{F}^{-1}_{N_y} \otimes \mathcal{F}^{-1}_{N_x}\big)\dot{\widehat{\EE}}^{t_{end}}_w, \quad \dot{\HH}^{t_{end}}_w=\big(\mathcal{F}^{-1}_{N_z}  \otimes \mathcal{F}^{-1}_{N_y} \otimes \mathcal{F}^{-1}_{N_x}\big)\dot{\widehat{\HH}}^{t_{end}}_w,\ \ \textmd{for}\ \ w=x,y,z,
\end{equation*}
with {
 \begin{equation*}%\label{time inte H}
\begin{array}[c]{ll}%
&\dot{\widehat{\HH}}_x^{t_{end}}=
-\frac{\ii}{\mu}\big(-\Omega_3\widehat{\EE}_y^{t_{end}}+\Omega_2\widehat{\EE}_z^{t_{end}}\big),\ \
 \dot{\widehat{\HH}}_y^{t_{end}}=-\frac{\ii}{\mu}\big( \Omega_3\widehat{\EE}_x^{t_{end}}-\Omega_1\widehat{\EE}_z^{t_{end}}\big),\\
&\dot{\widehat{\HH}}_z^{t_{end}}=-\frac{\ii}{\mu}\big(-\Omega_2\widehat{\EE}_x^{t_{end}}
+\Omega_1\widehat{\EE}_y^{t_{end}}\big),\ \
 \dot{\widehat{\EE}}_x^{t_{end}}=\frac{\ii}{\eps}\big(-\Omega_3\widehat{\HH}_y^{t_{end}}+\Omega_2\widehat{\HH}_z^{t_{end}}\big),\\
&\dot{\widehat{\EE}}_y^{t_{end}}=\frac{\ii}{\eps}\big( \Omega_3\widehat{\HH}_x^{t_{end}}-\Omega_1\widehat{\HH}_z^{t_{end}}\big),\qquad \ \
 \dot{\widehat{\EE}}_z^{t_{end}}=\frac{\ii}{\eps}\big(-\Omega_2\widehat{\HH}_x^{t_{end}}+
 \Omega_1\widehat{\HH}_y^{t_{end}}\big).
\end{array}\end{equation*}}
On the other hand, these discrete energy conservation laws imply that the numerical solutions are
bounded in the $L^2$ norm and do not blow up. Therefore, the scheme proposed in the paper is unconditionally stable.
\end{rem}

\begin{theo} \label{ene law thm} %For the numerical methods applied to  \eqref{2scale Fourier}, we consider two symmetric exponential-type integrators.
\textbf{(Helicity conservation laws.)}
For the solutions  given by the scheme \ref{dIUA-PE},  two
discrete Helicity conservation laws
$\mathcal{H}_1^{t_{end}}=\mathcal{H}_1^{0},\
  \mathcal{H}_2^{t_{end}}=\mathcal{H}_2^{0}$ hold,
where
\begin{equation*}%\label{matrixs}
  \begin{aligned}& \mathcal{H}_1^{t}=\frac{1}{2\eps}  \langle\HH^{t},\DD \HH^{t}\rangle_N+\frac{1}{2\mu}  \langle\EE^{t},\DD \EE^{t}\rangle_N,\ \
  \ \mathcal{H}_1^{t}=\frac{1}{2\eps}  \langle \frac{d}{dt}\HH^{t},\DD  \frac{d}{dt}\HH^{t}\rangle_N+\frac{1}{2\mu}  \langle \frac{d}{dt}\EE^{t},\DD  \frac{d}{dt}\EE^{t}\rangle_N.
    \end{aligned}
\end{equation*}
\end{theo}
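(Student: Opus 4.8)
The plan is to follow the template of the energy-conservation proof almost verbatim: differentiate the discrete helicity functional in time, then use the symmetry $\DD^{\intercal}=\DD$ together with the governing ODE \eqref{hamFD} to cancel the two resulting contributions against one another.

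First I would recall the two consequences of \eqref{hamFD} obtained by reading off its block rows, namely $\dot{\HH}^{t_{end}} = -\frac{1}{\mu}\DD\EE^{t_{end}}$ and $\dot{\EE}^{t_{end}} = \frac{1}{\eps}\DD\HH^{t_{end}}$. I would also isolate the single algebraic fact the proof rests on, which is already used for the energy laws: the matrix $\DD$ is symmetric, $\DD^{\intercal}=\DD$. Indeed each spectral differentiation block $D_w$ is skew-symmetric while the block pattern of $\DD$ is block-skew, so the two sign reversals combine to give a symmetric (and real) $\DD$. Because the physical-space grid values $\HH^{t_{end}},\EE^{t_{end}}$ are real, this yields $\langle \UU,\DD\VV\rangle_N = \langle \DD\UU,\VV\rangle_N$ for all real grid functions $\UU,\VV$, and $\langle\cdot,\cdot\rangle_N$ is symmetric on such functions.

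Next I would differentiate $\mathcal{H}_1^{t}$. Using the symmetry just recorded, the two halves of each quadratic term coincide, giving $\frac{d}{dt}\mathcal{H}_1^{t} = \frac{1}{\eps}\langle \dot{\HH}^{t},\DD\HH^{t}\rangle_N + \frac{1}{\mu}\langle \dot{\EE}^{t},\DD\EE^{t}\rangle_N$. Substituting the two expressions for $\dot{\HH}^{t}$ and $\dot{\EE}^{t}$ turns the right-hand side into $-\frac{1}{\mu\eps}\langle \DD\EE^{t},\DD\HH^{t}\rangle_N + \frac{1}{\mu\eps}\langle \DD\HH^{t},\DD\EE^{t}\rangle_N$, which vanishes since the inner product is symmetric on real grid functions. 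Hence $\frac{d}{dt}\mathcal{H}_1^{t_{end}}=0$ and $\mathcal{H}_1^{t_{end}}=\mathcal{H}_1^{0}$.

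For the second law I would differentiate \eqref{hamFD} once in time to see that $(\dot{\HH}^{t_{end}},\dot{\EE}^{t_{end}})$ obeys exactly the same ODE as $(\HH^{t_{end}},\EE^{t_{end}})$; the computation above then applies verbatim with $\HH,\EE$ replaced by $\dot{\HH},\dot{\EE}$, giving $\mathcal{H}_2^{t_{end}}=\mathcal{H}_2^{0}$. I expect no genuine analytic obstacle here: everything reduces to the skew/symmetry structure of $\DD$ and the Hamiltonian form of the scheme, so the only point demanding care is the sign and $\mu,\eps$ bookkeeping when moving $\DD$ across the discrete inner product via $\DD^{\intercal}=\DD$.
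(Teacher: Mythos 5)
Your proposal is correct and follows essentially the same route as the paper's own proof: differentiate the discrete helicity, substitute the block rows of \eqref{hamFD}, and cancel the two terms using $\DD^{\intercal}=\DD$ (which the paper also invokes, there via transposing the scalar $(\EE^{t_{end}})^{\intercal}\DD^{2}\HH^{t_{end}}$), with $\mathcal{H}_2$ handled by differentiating \eqref{hamFD} once in time, exactly as the paper does for $\mathcal{E}_2$ and then tacitly for $\mathcal{H}_2$. Your added justification that each $D_w$ is skew-symmetric while the block pattern of $\DD$ is block-skew, making $\DD$ symmetric, is a correct elaboration of a fact the paper states without proof.
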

\begin{proof}
Using \eqref{hamFD}, it is arrived at
  \begin{equation*}%\label{hamFD}
  \begin{aligned}
\frac{d}{d t} \mathcal{H}_1^{t_{end}}
= &\frac{1}{\eps } (\HH^{t_{end}}) ^{\intercal}\DD \dot{\HH}^{t_{end}} +  \frac{1}{\mu}  (\EE^{t_{end}})^{\intercal}\DD \dot{\EE}^{t_{end}}
= \frac{1}{\eps \mu} (\HH^{t_{end}}) ^{\intercal}\DD (-\DD \EE^{t_{end}}) +  \frac{1}{\mu \eps}  (\EE^{t_{end}})^{\intercal}\DD (\DD \HH^{t_{end}})\\
=&- \frac{1}{\eps \mu} (\HH^{t_{end}}) ^{\intercal}\DD^2 \EE^{t_{end}} +  \frac{1}{\mu \eps}  (\EE^{t_{end}})^{\intercal}\DD^2 \HH^{t_{end}}=0.
    \end{aligned} \end{equation*}
The second  statement  can be proved by  the similar arguments
{to} $\mathcal{H}_1^{t_{end}}$. \hfill
\end{proof}

\begin{theo} \label{mom law thm} %For the numerical methods applied to  \eqref{2scale Fourier}, we consider two symmetric exponential-type integrators.
\textbf{(Momentum conservation laws.)}
The solutions of the proposed scheme \ref{dIUA-PE} possess the
discrete momentum conservation laws $\mathcal{M}_1^{t_{end}}=\mathcal{M}_1^{0}, \ \mathcal{M}_2^{t_{end}}=\mathcal{M}_2^{0},$
where for $k=1,2,3$
\begin{equation*}%\label{matrixs}
  \begin{aligned}&  \mathcal{M}_1^{t}=  \langle\HH^{t},\textbf{B}_k\EE^{t}\rangle_N,\quad  \mathcal{M}_2^{t}=  \langle\EE^{t},\textbf{B}_k\HH^{t}\rangle_N \   \ \textmd{with}\ \
  \textbf{B}_k=\textmd{diag}(\DD_k, \DD_k,\DD_k).
    \end{aligned}
\end{equation*}
\end{theo}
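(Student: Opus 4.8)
The plan is to mirror the computations already carried out for the energy law $\mathcal{E}_1^{t_{end}}$ and the helicity law $\mathcal{H}_1^{t_{end}}$, since the fully discrete solution obeys the very same first-order system \eqref{hamFD}. First I would unpack \eqref{hamFD} into the two componentwise dynamical relations $\mu\dot{\HH}^{t_{end}}=-\DD\EE^{t_{end}}$ and $\eps\dot{\EE}^{t_{end}}=\DD\HH^{t_{end}}$, exactly as in the proof of $\mathcal{E}_1^{t_{end}}$. These two identities are the only use of the dynamics; everything else is linear algebra of the matrices $\DD$ and $\textbf{B}_k$.

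Next I would differentiate $\mathcal{M}_1^{t}=\langle\HH^{t},\textbf{B}_k\EE^{t}\rangle_N$ in $t$ by the product rule, writing the discrete inner product in the transpose notation used throughout Section \ref{sec:4}, so that $\frac{d}{dt}\mathcal{M}_1^{t}=(\dot{\HH})^{\intercal}\textbf{B}_k\EE+\HH^{\intercal}\textbf{B}_k\dot{\EE}$. Substituting the two relations above turns this into $-\frac{1}{\mu}\EE^{\intercal}\DD^{\intercal}\textbf{B}_k\EE+\frac{1}{\eps}\HH^{\intercal}\textbf{B}_k\DD\HH$. At this point I would invoke the algebraic facts already available: the symmetry $\DD^{\intercal}=\DD$ and the commutativity $\textbf{B}_k\DD=\DD\textbf{B}_k$ established in the energy proof, together with the skew-symmetry $\textbf{B}_k^{\intercal}=-\textbf{B}_k$, which follows immediately from $\DD_k^{\intercal}=-\DD_k$ (the oddness of $\cot$ makes each spectral matrix $D_w$ skew-symmetric). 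The identical treatment of $\mathcal{M}_2^{t}=\langle\EE^{t},\textbf{B}_k\HH^{t}\rangle_N$ produces $\frac{1}{\eps}\HH^{\intercal}\DD\textbf{B}_k\HH-\frac{1}{\mu}\EE^{\intercal}\DD\textbf{B}_k\EE$.

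The decisive observation, and the step I expect to be the main obstacle, is that $\DD\textbf{B}_k$ is skew-symmetric: combining the three facts gives $(\DD\textbf{B}_k)^{\intercal}=\textbf{B}_k^{\intercal}\DD^{\intercal}=-\textbf{B}_k\DD=-\DD\textbf{B}_k$, where the commutativity is what converts $-\textbf{B}_k\DD$ into $-\DD\textbf{B}_k$ (without it the transpose would only swap the factors). Since any scalar quadratic form $x^{\intercal}Sx$ equals its own transpose $x^{\intercal}S^{\intercal}x$, skew-symmetry of $S$ forces $x^{\intercal}Sx=-x^{\intercal}Sx=0$; hence each of $\EE^{\intercal}\DD\textbf{B}_k\EE$ and $\HH^{\intercal}\DD\textbf{B}_k\HH$ vanishes identically. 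Therefore $\frac{d}{dt}\mathcal{M}_1^{t}=\frac{d}{dt}\mathcal{M}_2^{t}=0$, and integrating on $[0,t_{end}]$ gives $\mathcal{M}_1^{t_{end}}=\mathcal{M}_1^{0}$ and $\mathcal{M}_2^{t_{end}}=\mathcal{M}_2^{0}$. The only bookkeeping to watch is the $\mu,\eps$ weighting coming from $\dot{\HH}$ and $\dot{\EE}$, but because the two terms vanish separately they never need to cancel against each other, so the weights are harmless.
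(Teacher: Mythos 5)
Your proposal is correct and follows essentially the same route as the paper's proof: differentiate $\mathcal{M}_1^{t}$, substitute the componentwise relations $\mu\dot{\HH}=-\DD\EE$ and $\eps\dot{\EE}=\DD\HH$ from \eqref{hamFD}, and combine $\DD^{\intercal}=\DD$, $\textbf{B}_k^{\intercal}=-\textbf{B}_k$, and $\textbf{B}_k\DD=\DD\textbf{B}_k$ to conclude that $\textbf{B}_k\DD$ is skew-symmetric so each quadratic form vanishes. The only difference is expository: you spell out the vanishing of $x^{\intercal}Sx$ for skew-symmetric $S$ and the skew-symmetry of each $D_w$ via the oddness of $\cot$, steps the paper leaves implicit.
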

\begin{proof}We first compute
  \begin{equation*}%\label{hamFD}
  \begin{aligned}
\frac{d}{d t} \mathcal{M}_1^{t_{end}}
= &  (\HH^{t_{end}}) ^{\intercal}\textbf{B}_k\dot{\EE}^{t_{end}} +(\dot{\HH}^{t_{end}}) ^{\intercal}  \textbf{B}_k\EE^{t_{end}}
=  \frac{1}{\eps} (\HH^{t_{end}}) ^{\intercal}\textbf{B}_k\DD \HH^{t_{end}} {-}\frac{1}{\mu}( \DD\EE^{t_{end}}) ^{\intercal}  \textbf{B}_k\EE^{t_{end}}.
    \end{aligned} \end{equation*}
With the properties  { $\DD^{\intercal}  =\DD,\ \textbf{B}_k^{\intercal}=-\textbf{B}_k$ and
    $\textbf{B}_k\DD=\DD \textbf{B}_k$, one has $(\textbf{B}_k\DD )^{\intercal}=-\textbf{B}_k\DD$}. Thus it is clear that
    $\frac{d}{d t} \mathcal{M}_1^{t_{end}}=0.$ The other statement can be shown in the same way.\hfill
\end{proof}

\begin{theo} \label{sym law thm} %For the numerical methods applied to  \eqref{2scale Fourier}, we consider two symmetric exponential-type integrators.
\textbf{(Symplecticity conservation law.)}
The solutions of the  scheme \ref{dIUA-PE} have the
discrete symplecticity conservation law $ d \EE^{t_{end}} \wedge  d \HH^{t_{end}}=d \EE^{0} \wedge  d \HH^{0},$
where
\begin{equation*}%\label{matrixs}
d \EE^{t} \wedge  d \HH^{t}=d \EE_x^{t} \wedge  d \HH_x^{t}+d \EE_y^{t} \wedge  d \HH_y^{t}+d \EE_z^{t} \wedge  d { \HH_z^{t}.}
   %\ \ d \EE_w^{t} \wedge  d \HH_w^{t}=\sum_{j,k,l} d \EE_{w_{j,k,l}}^{t} \wedge  d \HH_{w_{j,k,l}}^{t}.
\end{equation*}
\end{theo}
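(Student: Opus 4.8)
The plan is to mimic the differentiation argument already used for the energy, helicity and momentum laws, but now carried out at the level of the symplectic two-form rather than a scalar invariant. First I would record that, exactly as in \eqref{hamFD}, the output of the scheme viewed as a function of $t$ satisfies the first-order relations $\dot{\widetilde{\EE}}=\frac{1}{\eps}\DD\widetilde{\HH}$ and $\dot{\widetilde{\HH}}=-\frac{1}{\mu}\DD\widetilde{\EE}$, where $\widetilde{\EE}=(\EE_x,\EE_y,\EE_z)^{\intercal}$ and $\widetilde{\HH}=(\HH_x,\HH_y,\HH_z)^{\intercal}$ are the full stacked vectors over all field components and grid points, and $\DD$ is the $3N_S\times 3N_S$ curl matrix. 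I would then interpret the discrete symplectic form by stacking: writing $E_i,H_i$ for the scalar entries of $\widetilde{\EE},\widetilde{\HH}$,
\begin{equation*}
\omega(t):=d\EE^{t}\wedge d\HH^{t}=\sum_{w=x,y,z}d\EE_w^{t}\wedge d\HH_w^{t}=\sum_{i}dE_i\wedge dH_i .
\end{equation*}

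Next I would differentiate $\omega(t)$ in $t$ and insert the equations of motion. Because $\DD$ has constant entries, the exterior derivative commutes with it, so $d\dot{E}_i=\tfrac{1}{\eps}\sum_j\DD_{ij}\,dH_j$ and $d\dot{H}_i=-\tfrac{1}{\mu}\sum_j\DD_{ij}\,dE_j$. Hence
\begin{equation*}
\frac{d}{dt}\omega(t)=\sum_i d\dot{E}_i\wedge dH_i+\sum_i dE_i\wedge d\dot{H}_i
=\frac{1}{\eps}\sum_{i,j}\DD_{ij}\,dH_j\wedge dH_i-\frac{1}{\mu}\sum_{i,j}\DD_{ij}\,dE_i\wedge dE_j .
\end{equation*}
The decisive step is then purely algebraic: in each double sum the coefficient matrix is symmetric, $\DD_{ij}=\DD_{ji}$ (the property $\DD^{\intercal}=\DD$ already exploited in the energy and momentum proofs), while the two-forms are antisymmetric, $dH_j\wedge dH_i=-dH_i\wedge dH_j$. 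Relabeling $i\leftrightarrow j$ therefore shows that each sum equals its own negative and hence vanishes, giving $\frac{d}{dt}\omega(t)=0$. Integrating from $0$ to $t_{end}$ yields $d\EE^{t_{end}}\wedge d\HH^{t_{end}}=d\EE^{0}\wedge d\HH^{0}$, which is the claim.

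Equivalently, one could argue structurally rather than by differentiation: after the scaling $(\sqrt{\mu}\HH,\sqrt{\eps}\EE)$ the scheme realises the exact flow $e^{t\mathcal{D}}$ of Proposition \ref{pro1}, and $\mathcal{D}$ is infinitesimally symplectic with respect to the standard constant form $J$ precisely because $\DD^{\intercal}=\DD$ forces $J\mathcal{D}$ to be symmetric; the exponential of such a Hamiltonian matrix is symplectic, and the constant factor $1/\sqrt{\mu\eps}$ relating the scaled and unscaled forms does not affect preservation. I expect the only genuine obstacle here to be notational rather than conceptual. One must set up the vector-valued wedge product carefully so that the curl matrix $\DD$, which couples the three field components $x,y,z$, is contracted against a genuinely antisymmetric object; the entire vanishing hinges on pairing the symmetry of $\DD$ against the antisymmetry of $\wedge$, so any sloppiness in the component bookkeeping would obscure exactly the cancellation that drives the proof.
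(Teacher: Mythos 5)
Your proof is correct, but it takes a genuinely different route from the paper's. The paper never differentiates the two-form: it substitutes the closed-form flow, writing via Proposition \ref{pro1} (with $\DD$ in place of $\Lambda$) $\sqrt{\eps}\,\EE^{t_{end}}=\sin\bigl(\tfrac{t_{end}}{\sqrt{\mu\eps}}\DD\bigr)\sqrt{\mu}\,\HH^{0}+\cos\bigl(\tfrac{t_{end}}{\sqrt{\mu\eps}}\DD\bigr)\sqrt{\eps}\,\EE^{0}$ and its companion for $\HH^{t_{end}}$, expands the wedge product, and concludes from $\cos^{2}+\sin^{2}=I$ together with $d\HH^{0}\wedge d\EE^{0}=-d\EE^{0}\wedge d\HH^{0}$ — a finite-time identity in one line, with no ODE integration. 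Your primary argument instead differentiates $\omega(t)=\sum_i dE_i\wedge dH_i$ along the semiflow \eqref{hamFD} (your equations of motion $\dot{\widetilde{\EE}}=\tfrac{1}{\eps}\DD\widetilde{\HH}$, $\dot{\widetilde{\HH}}=-\tfrac{1}{\mu}\DD\widetilde{\EE}$ do follow from \eqref{hamFD} after undoing the $\sqrt{\mu},\sqrt{\eps}$ scaling) and kills $\dot\omega$ by pairing $\DD^{\intercal}=\DD$ against the antisymmetry of the wedge — exactly the mechanism the paper deploys for the scalar invariants $\mathcal{E}_k$, $\mathcal{H}_k$, $\mathcal{M}_k$, here promoted to the two-form level; your closing structural remark (that $J\mathcal{D}$ is symmetric, hence $e^{t\mathcal{D}}$ is symplectic, with the constant scaling $\sqrt{\mu\eps}$ harmless) is the coordinate-free essence of the paper's computation. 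Each route buys something. Your infinitesimal version avoids the explicit trigonometric block matrices altogether, and it is actually more careful on one point the paper glosses over: in the paper's expansion the cross terms of type $\sin\cos\,d\HH^{0}\wedge d\HH^{0}$ and $\cos\sin\,d\EE^{0}\wedge d\EE^{0}$ are silently dropped, and they vanish only because $\sin\cos$, being a function of the symmetric matrix $\DD$, is itself symmetric while the contraction $\sum_{j,k}A_{jk}\,dH^{0}_j\wedge dH^{0}_k$ is antisymmetric — precisely the symmetric-versus-antisymmetric bookkeeping you flag as the crux. The paper's route, in exchange, reuses the matrix exponential already computed for the scheme and delivers the conservation without invoking differentiability of the numerical semigroup in $t_{end}$.
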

\begin{proof}
Considering \eqref{hamFD}, we deduce that
\begin{equation}
   \begin{pmatrix}
                               \sqrt{\mu}\HH^{t_{end}}  \\
                             \sqrt{\eps}\EE^{t_{end}}
                                 \end{pmatrix}
   =e^{t_{end}\mathcal{D}}\begin{pmatrix}
                               \sqrt{\mu} \HH^{0}   \\
                             \sqrt{\eps}\EE^{0}
                                 \end{pmatrix}=\left(
                \begin{array}{cc}
                  \cos\big(\frac{t_{end}}{\sqrt{\mu\eps}}\DD\big) & -\sin\big(\frac{t_{end}}{\sqrt{\mu\eps}}\DD\big) \\
                  \sin\big(\frac{t_{end}}{\sqrt{\mu\eps}}\DD\big) & \cos\big(\frac{t_{end}}{\sqrt{\mu\eps}}\DD\big) \\
                \end{array}
              \right)\begin{pmatrix}
                               \sqrt{\mu} \HH^{0}   \\
                             \sqrt{\eps}\EE^{0}
                                 \end{pmatrix}.
\end{equation}
Therefore, one gets
\begin{equation*}%\label{matrixs}
  \begin{aligned}& d \EE^{t_{end}} \wedge  d \HH^{t_{end}}\\=&\frac{1}{ \sqrt{\eps}}d\Big( \sin\big(\frac{t_{end}}{\sqrt{\mu\eps}}\DD\big) \sqrt{\mu} \HH^{0} +\cos\big(\frac{t_{end}}{\sqrt{\mu\eps}}\DD\big) \sqrt{\eps}\EE^{0}\Big)
  \wedge
  \frac{1}{ \sqrt{\mu}}d\Big( \cos\big(\frac{t_{end}}{\sqrt{\mu\eps}}\DD\big) \sqrt{\mu} \HH^{0} -\sin\big(\frac{t_{end}}{\sqrt{\mu\eps}}\DD\big) \sqrt{\eps}\EE^{0}\Big)\\
  =& \cos^2\big(\frac{t_{end}}{\sqrt{\mu\eps}}\DD\big)d\EE^{0}   \wedge  d\HH^{0}
  - \sin^2\big(\frac{t_{end}}{\sqrt{\mu\eps}}\DD\big)d\HH^{0}   \wedge  d\EE^{0}=d\EE^{0}   \wedge  d\HH^{0},\end{aligned}
\end{equation*}
where we have used the fact that $d\HH^{0}   \wedge  d\EE^{0}=-d\EE^{0}   \wedge  d\HH^{0}.$\hfill
\end{proof}

\begin{theo} \label{divlaw thm} %For the numerical methods applied to  \eqref{2scale Fourier}, we consider two symmetric exponential-type integrators.
\textbf{(Divergence-free field conservation law.)} The following
discrete divergence-free field conservation laws {hold
true} for the solutions $\EE^{t_{end}},
   \HH^{t_{end}}$ produced by the scheme \ref{dIUA-PE}
\begin{equation*}%\label{matrixs}
  \begin{aligned}&  \widetilde{\nabla}\cdot (\eps \EE^{t_{end}})= \widetilde{\nabla}\cdot (\eps \EE^{0}),\ \ \
   \widetilde{\nabla}\cdot (\mu \HH^{t_{end}})= \widetilde{\nabla}\cdot (\mu \HH^{0}),\end{aligned}
\end{equation*}
where $\widetilde{\nabla}\cdot (\eps \EE^{t})= \DD_1 (\eps \EE_x^{t})+ \DD_2 (\eps \EE_y^{t})+ \DD_3 (\eps \EE_z^{t}),\
  \widetilde{\nabla}\cdot (\mu \HH^{t})= \DD_1 (\mu \HH_x^{t})+ \DD_2 (\mu \HH_y^{t})+ \DD_3 (\mu \HH_z^{t}).$
\end{theo}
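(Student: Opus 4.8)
The plan is to prove the two identities directly from the closed-form one-step solution map, without differentiating any continuous-in-time flow. As recorded in the proof of Theorem~\ref{sym law thm} and justified by Proposition~\ref{pro1}, the numerical output satisfies
\begin{equation*}
\begin{pmatrix}\sqrt{\mu}\,\HH^{t_{end}}\\ \sqrt{\eps}\,\EE^{t_{end}}\end{pmatrix}
=\begin{pmatrix}\cos\big(\tfrac{t_{end}}{\sqrt{\mu\eps}}\DD\big) & -\sin\big(\tfrac{t_{end}}{\sqrt{\mu\eps}}\DD\big)\\ \sin\big(\tfrac{t_{end}}{\sqrt{\mu\eps}}\DD\big) & \cos\big(\tfrac{t_{end}}{\sqrt{\mu\eps}}\DD\big)\end{pmatrix}
\begin{pmatrix}\sqrt{\mu}\,\HH^{0}\\ \sqrt{\eps}\,\EE^{0}\end{pmatrix},
\end{equation*}
so that, writing $\kappa:=t_{end}/\sqrt{\mu\eps}$,
\begin{equation*}
\EE^{t_{end}}=\tfrac{\sqrt{\mu}}{\sqrt{\eps}}\sin(\kappa\DD)\,\HH^{0}+\cos(\kappa\DD)\,\EE^{0},\qquad
\HH^{t_{end}}=\cos(\kappa\DD)\,\HH^{0}-\tfrac{\sqrt{\eps}}{\sqrt{\mu}}\sin(\kappa\DD)\,\EE^{0}.
\end{equation*}
Here $\HH^{0},\EE^{0},\HH^{t_{end}},\EE^{t_{end}}$ denote the stacked $3N_xN_yN_z$-vectors of Cartesian components and $\cos(\kappa\DD),\sin(\kappa\DD)$ are matrix functions of the single block operator $\DD$. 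Since $\eps$ and $\mu$ are constant scalars they factor out of $\widetilde{\nabla}\cdot$, so it suffices to prove $\widetilde{\nabla}\cdot\,\EE^{t_{end}}=\widetilde{\nabla}\cdot\,\EE^{0}$ and $\widetilde{\nabla}\cdot\,\HH^{t_{end}}=\widetilde{\nabla}\cdot\,\HH^{0}$, where I regard the discrete divergence as the row block operator $\widetilde{\nabla}\cdot\,=(\DD_1,\DD_2,\DD_3)$.

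The cornerstone is a discrete ``divergence of a curl vanishes'' identity. In block form the discrete curl reads
\begin{equation*}
\DD=\begin{pmatrix}\mathbf{0}&-\DD_3&\DD_2\\ \DD_3&\mathbf{0}&-\DD_1\\ -\DD_2&\DD_1&\mathbf{0}\end{pmatrix},
\end{equation*}
and a direct row-times-matrix multiplication gives
\begin{equation*}
(\DD_1,\DD_2,\DD_3)\,\DD=\big(\DD_2\DD_3-\DD_3\DD_2,\ \DD_3\DD_1-\DD_1\DD_3,\ \DD_1\DD_2-\DD_2\DD_1\big)=(\mathbf{0},\mathbf{0},\mathbf{0}),
\end{equation*}
where I invoke the mutual commutativity $\DD_i\DD_j=\DD_j\DD_i$, which is established in the proof of the energy conservation laws above and reflects that each $\DD_i$ acts on a distinct factor of the Kronecker product. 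Thus $\widetilde{\nabla}\cdot\,\DD=(\mathbf{0},\mathbf{0},\mathbf{0})$ as a row operator, and hence $\widetilde{\nabla}\cdot\,\DD^{\,n}=(\widetilde{\nabla}\cdot\,\DD)\,\DD^{\,n-1}=(\mathbf{0},\mathbf{0},\mathbf{0})$ for every $n\geq1$.

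I then propagate this identity through the two matrix functions by their power series. The cosine $\cos(\kappa\DD)=\sum_{m\geq0}\tfrac{(-1)^m\kappa^{2m}}{(2m)!}\DD^{2m}$ contains only even powers of $\DD$, its single $m=0$ term being the identity; every summand with $m\geq1$ carries a factor $\DD^{2m}$ with $2m\geq1$ and is therefore annihilated on the left by $\widetilde{\nabla}\cdot$, whence $\widetilde{\nabla}\cdot\,\cos(\kappa\DD)=\widetilde{\nabla}\cdot$. Similarly $\sin(\kappa\DD)=\sum_{m\geq0}\tfrac{(-1)^m\kappa^{2m+1}}{(2m+1)!}\DD^{2m+1}$ contains only odd, hence strictly positive, powers of $\DD$, so $\widetilde{\nabla}\cdot\,\sin(\kappa\DD)=(\mathbf{0},\mathbf{0},\mathbf{0})$. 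Substituting into the two solution formulas collapses the $\sin$-contributions to zero and leaves the $\cos$-contributions unchanged, yielding $\widetilde{\nabla}\cdot\,\EE^{t_{end}}=\widetilde{\nabla}\cdot\,\EE^{0}$ and $\widetilde{\nabla}\cdot\,\HH^{t_{end}}=\widetilde{\nabla}\cdot\,\HH^{0}$; multiplying by the constants $\eps$ and $\mu$ produces the stated laws.

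The only point deserving careful writing is the distribution of the row operator $\widetilde{\nabla}\cdot$ term-by-term across the matrix power series defining $\cos(\kappa\DD)$ and $\sin(\kappa\DD)$; in this finite-dimensional setting the series converge absolutely and the exchange is routine, so the entire argument reduces to the single block identity $\widetilde{\nabla}\cdot\,\DD=(\mathbf{0},\mathbf{0},\mathbf{0})$ combined with the parity of the two series. I would also note explicitly that $\DD$ is precisely the operator whose matrix functions constitute the one-step map, so the identity applies verbatim. Because no time derivative is used anywhere, the conclusion holds directly for the discrete output $\EE^{t_{end}},\HH^{t_{end}}$ of the scheme, which is exactly what the statement asks.
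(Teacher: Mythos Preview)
Your proof is correct and takes a genuinely different route from the paper. The paper works in Fourier space: it expands $\widetilde{\nabla}\cdot\EE^{t_{end}}$ using the explicit block formulas $\textbf{c}_{ij}$, $\textbf{s}_{ij}$ of Proposition~\ref{pro2} and then verifies, entry by entry, six algebraic identities of the form $\Lambda_1\textbf{c}_{11}+\Lambda_2\textbf{c}_{12}+\Lambda_3\textbf{c}_{13}=\Lambda_1$ and $-\Lambda_1\textbf{s}_{12}+\Lambda_3\textbf{s}_{23}=\mathbf{0}$, which together reconstruct $\widetilde{\nabla}\cdot\EE^{0}$. You instead isolate a single structural identity, the discrete ``$\mathrm{div}\,\mathrm{curl}=0$'' relation $(\DD_1,\DD_2,\DD_3)\DD=\mathbf{0}$, and then push it through the cosine and sine power series, exploiting only the parity of those series. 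Your argument is more conceptual: it makes transparent \emph{why} the divergence is conserved (every update is a function of the discrete curl, whose discrete divergence vanishes) and it requires no reference to the closed-form expressions in Proposition~\ref{pro2}. The paper's computation, on the other hand, double-checks that those explicit formulas are internally consistent, which has some independent value as a sanity check on \eqref{matrixs}. Both rely on the commutativity $\DD_i\DD_j=\DD_j\DD_i$ established in the proof of Theorem~\ref{ene law thm}.
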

\begin{proof}{ Concerning the results of $\DD_1,\DD_2,\DD_3$ and $\EE^{t_{end}}$, it} can be verified that
  \begin{equation*}%\label{hamFD}
  \begin{aligned}
\widetilde{\nabla}\cdot   \EE^{t_{end}} =& \DD_1   \EE_x^{t_{end}} + \DD_2   \EE_y^{t_{end}} + \DD_3   \EE_z^{t_{end}} \\
=& \mathcal{F}_{N_S}^{-1}\big(\Lambda_1\textbf{c}_{11}+\Lambda_2\textbf{c}_{12}+\Lambda_3\textbf{c}_{13}\big)  \mathcal{F}_{N_S} \EE_x^{0}+\mathcal{F}_{N_S}^{-1}\big(\Lambda_1\textbf{c}_{12}+\Lambda_2\textbf{c}_{22}+\Lambda_3\textbf{c}_{23}\big)  \mathcal{F}_{N_S} \EE_y^{0}\\&+\mathcal{F}_{N_S}^{-1}\big(\Lambda_1\textbf{c}_{13}+\Lambda_2\textbf{c}_{23}+\Lambda_3\textbf{c}_{33}\big)  \mathcal{F}_{N_S} { \EE_z^{0}}+\frac{\sqrt{\mu}}{\sqrt{\eps}}
\mathcal{F}_{N_S}^{-1}\big(-\Lambda_1\textbf{s}_{12}+\Lambda_3\textbf{s}_{23}\big)  \mathcal{F}_{N_S} \HH_x^{0}\\
&+\frac{\sqrt{\mu}}{\sqrt{\eps}}
\mathcal{F}_{N_S}^{-1}\big(\Lambda_2\textbf{s}_{12}-\Lambda_3\textbf{s}_{13}\big)  \mathcal{F}_{N_S} \HH_y^{0}+
+\frac{\sqrt{\mu}}{\sqrt{\eps}}
\mathcal{F}_{N_S}^{-1}\big(\Lambda_1\textbf{s}_{13}-\Lambda_2\textbf{s}_{23}\big)  \mathcal{F}_{N_S} \HH_z^{0},
    \end{aligned} \end{equation*}
    where $\mathcal{F}_{N_S}=\mathcal{F}_{N_z}  \otimes \mathcal{F}_{N_y} \otimes \mathcal{F}_{N_x}$ and we omit $(t_{end}\Lambda/\sqrt{\mu\eps})$ for brevity.
    According to the results given in the formulation of the method, it can be checked that
      \begin{equation*}%\label{hamFD}
  \begin{aligned}
& \Lambda_1\textbf{c}_{11}+\Lambda_2\textbf{c}_{12}+\Lambda_3\textbf{c}_{13}=\mathbf{I},\ \ \Lambda_1\textbf{c}_{12}+\Lambda_2\textbf{c}_{22}+\Lambda_3\textbf{c}_{23} =\mathbf{I},\ \ \Lambda_1\textbf{c}_{13}+\Lambda_2\textbf{c}_{23}+\Lambda_3\textbf{c}_{33}=\mathbf{I},\\
&-\Lambda_1\textbf{s}_{12}+\Lambda_3\textbf{s}_{23}=\mathbf{0},\qquad \quad
 \Lambda_2\textbf{s}_{12}-\Lambda_3\textbf{s}_{13}=\mathbf{0},\qquad \qquad \
\Lambda_1\textbf{s}_{13}-\Lambda_2\textbf{s}_{23}=\mathbf{0},
    \end{aligned} \end{equation*}
   which lead to the first result of this theorem. The second one can be proved in a similar way.\hfill
     % \begin{equation}\label{time inte E}
%\begin{array}[c]{ll}%
%(t_{end}\Lambda/\sqrt{\mu\eps})\\
%&
% \sqrt{\eps}\widehat{\EE}^{t_{end}}_x= &\sqrt{\eps}\big(\textbf{c}_{11}(t_{end}\Lambda/\sqrt{\mu\eps}) \widehat{\EE}^{0}_x
%+\textbf{c}_{12}(t_{end}\Lambda/\sqrt{\mu\eps}) \widehat{\EE}^{0}_y+\textbf{c}_{13}(t_{end}\Lambda/\sqrt{\mu\eps}) \widehat{\EE}^{0}_z\big)\\
%&+\sqrt{\mu}\big(-\textbf{s}_{12}(t_{end}\Lambda/\sqrt{\mu\eps}) \widehat{\HH}^{0}_y
%+\textbf{s}_{13}(t_{end}\Lambda/\sqrt{\mu\eps}) \widehat{\HH}^{0}_z\big),\\
% \sqrt{\eps}\widehat{\EE}^{t_{end}}_y= &\sqrt{\eps}\big(\textbf{c}_{12}(t_{end}\Lambda/\sqrt{\mu\eps}) \widehat{\EE}^{0}_x
%+\textbf{c}_{22}(t_{end}\Lambda/\sqrt{\mu\eps}) \widehat{\EE}^{0}_y+\textbf{c}_{23}(t_{end}\Lambda/\sqrt{\mu\eps}) \widehat{\EE}^{0}_z\big)\\
%&+\sqrt{\mu}\big(\textbf{s}_{12}(t_{end}\Lambda/\sqrt{\mu\eps}) \widehat{\HH}^{0}_x
%-\textbf{s}_{23}(t_{end}\Lambda/\sqrt{\mu\eps}) \widehat{\HH}^{0}_z\big),\\
% \sqrt{\eps}\widehat{\EE}^{t_{end}}_z= &\sqrt{\eps}\big(\textbf{c}_{13}(t_{end}\Lambda/\sqrt{\mu\eps}) \widehat{\EE}^{0}_x
%+\textbf{c}_{23}(t_{end}\Lambda/\sqrt{\mu\eps}) \widehat{\EE}^{0}_y+\textbf{c}_{33}(t_{end}\Lambda/\sqrt{\mu\eps}) \widehat{\EE}^{0}_z\big)\\
%&+\sqrt{\mu}\big(-\textbf{s}_{13}(t_{end}\Lambda/\sqrt{\mu\eps}) \widehat{\HH}^{0}_x
%+\textbf{s}_{23}(t_{end}\Lambda/\sqrt{\mu\eps}) \widehat{\HH}^{0}_y\big),\\
%\end{array}\end{equation}
\end{proof}

\begin{rem}
These conservation laws stated above are established in a discrete
form, i.e., those invariants are defined by  $\HH^{t}$ and
$\EE^{t}$. For the error between the discrete conservations and the
exact conservations, it can be obtained by considering   the
convergence shown in Theorem  \ref{con thm}, which leads to the
estimate $\mathcal{O}(N^{-r})$. {For instance, we} take
the discrete divergence-free field conservation laws and it can be
shown that $\abs{\widetilde{\nabla}\cdot (\eps \EE^{0})-\nabla\cdot
(\eps \EE^{0})}\leq CN^{-r}.$
   \end{rem}

\section{Numerical experiments}\label{sec:5}
 In this section, we present numerical experiments
to show the performance of  our scheme. These two tests are conducted in a sequential program in MATLAB on a laptop
   ThinkPad X1 Nano (CPU: 11th Gen Intel(R) Core(TM) i7-1160G7 @ 1.20GHz   2.11 GHz, Memory: 16 GB, Os: Microsoft Windows 11 with 64bit).

   \subsection{Standing wave solutions}
The first test is devoted to the standing wave solutions of Maxwell's equations
\eqref{maxsys} (\cite{n5,n14})
   \begin{equation}\label{standing}
  \begin{aligned}&E_x=\frac{k_y-k_z}{\eps\sqrt{\mu} \omega}\cos(\omega\pi t)\cos(k_x\pi x)
  \sin(k_y\pi y)  \sin(k_z\pi z),\ \
    H_x=\sin(\omega\pi t)\sin(k_x\pi x)
  \cos(k_y\pi y)  \cos(k_z\pi z),\\
    &E_y=\frac{k_z-k_x}{\eps\sqrt{\mu} \omega}\cos(\omega\pi t)\sin(k_x\pi x)
  \cos(k_y\pi y)  \sin(k_z\pi z),\ \
   H_y=\sin(\omega\pi t)\cos(k_x\pi x)
  \sin(k_y\pi y)  \cos(k_z\pi z),\\
    &E_z=\frac{k_x-k_y}{\eps\sqrt{\mu} \omega}\cos(\omega\pi t)\sin(k_x\pi x)
  \sin(k_y\pi y)  \cos(k_z\pi z),\  \
     H_z=\sin(\omega\pi t)\cos(k_x\pi x)
  \cos(k_y\pi y)  \sin(k_z\pi z),
  \end{aligned}
\end{equation}
where $\eps=\mu=1,\ \omega=\sqrt{\frac{k_x^2+k_y^2+k_z^2}{\eps\mu}},\ k_x=1,\ k_y=2,\ k_z=-3$ and $ \Omega=[0,2]^3$.

 \textbf{Energy conservation behaviour.}
 In this part, we test the performance of our scheme in the structure preserving laws, which begins with the energy invariants.
Define the relative errors in discrete energy invariants  $\textmd{Re}(\mathcal{E}_k)=\frac{\abs{\mathcal{E}_k^{t_{end}}-\mathcal{E}_k^{0}}}{\abs{\mathcal{E}_k^{0}}}$
and display the results of our scheme for $k=1,2,3,4$  in Table \ref{error e}. We note here that the scheme has a similar behaviour for the other two  energy invariants
$\mathcal{E}_5, \mathcal{E}_6$ and the corresponding results are skipped for brevity.
Then we present in   Table \ref{error o} the relative changes in discrete helicity and momentum invariants
$\textmd{Re}(\mathcal{H}_k)=\frac{\abs{\mathcal{H}_k^{t_{end}}-\mathcal{H}_k^{0}}}{\abs{\mathcal{H}_k^{0}}}$
and $\textmd{Re}(\mathcal{M}_k)=\frac{\abs{\mathcal{M}_k^{t_{end}}-\mathcal{M}_k^{0}}}{\abs{\mathcal{M}_k^{0}}}$
 {for} $k=1,2.$
 Finally, the   relative errors in divergence-free field discrete helicity
  $\textmd{Re}(\mathcal{D}_1)= \abs{\widetilde{\nabla}\cdot (\eps \HH^{t_{end}})}$ and  $\textmd{Re}(\mathcal{D}_2)= \abs{\widetilde{\nabla}\cdot (\eps \EE^{t_{end}})}$
 are displayed in Table \ref{error D}. It can be observed from the results that our scheme  preserves the  invariants exactly since the relative errors are within the roundoff error
 of the machine, which supports the theoretical analysis proposed in {this paper.} To show the long time conservation, we plot the errors in discrete energy invariants
 on $[0,10000]$ and the results are shown in Figure \ref{fig21}. {It can be seen} that our scheme has a persistent conservation over long times.

 \textbf{Accuracy analysis.}   For this problem, the regularities are infinite so that as shown in Theorem  \ref{con thm} the solutions of  the  schemes converge with infinite-order accuracy both in space and in time.
  In our simulations, we use two  norms which are defined by
  $L_{2}= \Big(  \norm{\EE^{t_{end}}-\EE(t_{end})}_N^2+\norm{\HH^{t_{end}}-\HH(t_{end})}_N^2\Big)^{\frac{1}{2}}$
  and  $L_{\infty}=\max\{ \max\abs{\EE^{t_{end}}-\EE(t_{end})}, \max\abs{\HH^{t_{end}}-\HH(t_{end})}\}$
to scale the maximal and average errors in solution, respectively.
The numerical errors as well as the CPU time used in the scheme are
listed in Table \ref{error s}.  From the results, it can be observed
that our scheme provides {numerical results with} a
machine accuracy and the {computational cost} is very
low, which demonstrates the effectiveness and efficiency of the new
scheme.

%From the tables, we see that both the S-AVF(2) and
%S-AVF(4) schemes simulate the wave propagation well. We also see that the errors
%in the solution obtained by both S-AVF(2) and S-AVF(4) grow as time evolves, while
%S-AVF(4) provides more accurate solutions than S-AVF(2).

\begin{table}[tbp]
\caption{The relative errors in discrete energy invariants for Example 1.}\label{error e2}
  \vskip0.1in \tabcolsep0.1in
\begin{tabular}{ l lllllll }
    \toprule
$\raisebox{-0.0ex}[0pt]{Spatial grid points}$   & $\raisebox{-0.0ex}[0pt]{Time}$   & $\textmd{Re}(\mathcal{E}_1)$  &  $\textmd{Re}(\mathcal{E}_2)$    & $\textmd{Re}(\mathcal{E}_3)$
&$\textmd{Re}(\mathcal{E}_4)$     \\
\hline
$N_x=N_y=N_z=8$
              &$t_{end}=$1 &   2.9605e-16  & 2.7425e-16      & 2.1331e-16  & 0.0000e-16   \\
              &$t_{end}=$5 &   1.4802e-16  & 1.3712e-16      & 2.1331e-16  & 3.9521e-16    \\
              &$t_{end}=$10 &   5.9211e-16 & 4.1138e-16      & 4.2662e-16  & 5.9281e-16   \\
              &$t_{end}=$15 &   1.4802e-16  & 1.3712e-16      & 0.0000e-16  & 1.9760e-16      \\
              &$t_{end}=$20 &   2.9605e-16  & 4.1138e-16      & 4.2662e-16  & 1.9760e-16      \\
              \hline
$N_x=N_y=N_z=16$
              &$t_{end}=$1 &   2.9605e-16  & 2.7425e-16      & 2.1331e-16  &  0.0000e-16   \\
              &$t_{end}=$5 &   0.0000e-16  &1.3712e-16      & 4.2662e-16  &  1.9760e-16    \\
              &$t_{end}=$10 &   2.9605e-16 & 4.1138e-16      & 4.2662e-16  & 3.9521e-16   \\
              &$t_{end}=$15 &  0.0000e-16  & 0.0000e-16      & 0.0000e-16  & 0.0000e-16      \\
              &$t_{end}=$20 &   1.4802e-16  & 1.3712e-16      & 4.2662e-16  & 1.9760e-16      \\
    \bottomrule
\end{tabular}
\end{table}

\begin{table}[t!]
\caption{The relative errors in discrete  helicity and momentum invariants for Example 1.}\label{error o2}
  \vskip0.1in \tabcolsep0.1in
\begin{tabular}{ l lllllll }
    \toprule
$\raisebox{-0.0ex}[0pt]{Spatial grid points}$   & $\raisebox{-0.0ex}[0pt]{Time}$   & $\textmd{Re}(\mathcal{H}_1)$  &  $\textmd{Re}(\mathcal{H}_2)$    & $\textmd{Re}(\mathcal{M}_1)$
&$\textmd{Re}(\mathcal{M}_2)$     \\
\hline
$N_x=N_y=N_z=8$
              &$t_{end}=$1 &   34575e-14  & 53756e-14      & 6.6189e-12  & 5.7559e-12   \\
              &$t_{end}=$5 &   10759e-14  & 44125e-14      & 8.2414e-12  & 7.5206e-12    \\
              &$t_{end}=$10 &   53193e-15 & 31357e-15      & 2.3899e-12  & 6.0112e-12   \\
              &$t_{end}=$15 &   14507e-14  & 29453e-14      & 6.4139e-12  & 6.6866e-12     \\
              &$t_{end}=$20 &   42917e-14  & 63387e-14      & 3.7772e-12  & 2.1877e-11      \\
              \hline
$N_x=N_y=N_z=16$
              &$t_{end}=$1 &   2.0793e-14  & 1.6924e-13      & 1.2774e-10  &  5.0600e-10   \\
              &$t_{end}=$5 &   1.3323e-14  & 1.2081e-13      & 1.2704e-10  &  5.6392e-10    \\
              &$t_{end}=$10 &  6.1976e-14 & 1.3128e-13      & 2.6356e-11  & 5.8609e-10   \\
              &$t_{end}=$15 &  6.4600e-14  & 1.0865e-13      & 1.7854e-10  & 5.4005e-10      \\
              &$t_{end}=$20 &   4.8450e-15  & 2.1487e-13      & 1.2504e-10  & 3.4773e-10      \\
    \bottomrule
\end{tabular}
\end{table}

\begin{table}[tbp]
\caption{The  errors in  discrete divergence-free field conservation for Example 1.}\label{error D2}
\centering
%\fontsize{10}{10}\selectfont
\begin{threeparttable}
\begin{tabular}{lccccc}
    \toprule
      \multicolumn{1}{c}{  }&   \multicolumn{2}{c}{$N_x=N_y=N_z=8$ }&\multicolumn{2}{c}{$N_x=N_y=N_z=16$ }\cr
    \cmidrule(lr){2-3} \cmidrule(lr){4-5}
    Time&$\textmd{Re}(\mathcal{D}_1)$&$\textmd{Re}(\mathcal{D}_2)$&$\textmd{Re}(\mathcal{D}_1)$&$\textmd{Re}(\mathcal{D}_2)$\cr
    \midrule
     $t_{end}=$1 &  2.5121e-15 &  4.3962e-14      &  7.1054e-15  & 1.4210e-14\cr
     $t_{end}=$5 &  2.5121e-15  & 4.3962e-14     &  1.4210e-14  & 1.7763e-14\cr
     $t_{end}=$10 &  5.0242e-15  &4.3962e-14      &  1.4210e-14  & 1.3322e-14\cr
     $t_{end}=$15 &  2.5121e-15  & 2.5121e-14      &  1.0658e-14  & 1.0658e-14\cr
     $t_{end}=$20 &   2.5121e-15  &3.7682e-14      &  1.4210e-14  & 1.0658e-14\cr
    \bottomrule
    \end{tabular}
    \end{threeparttable}
\end{table}

    \begin{figure}[tbp]
$$\begin{array}{cc}
\psfig{figure=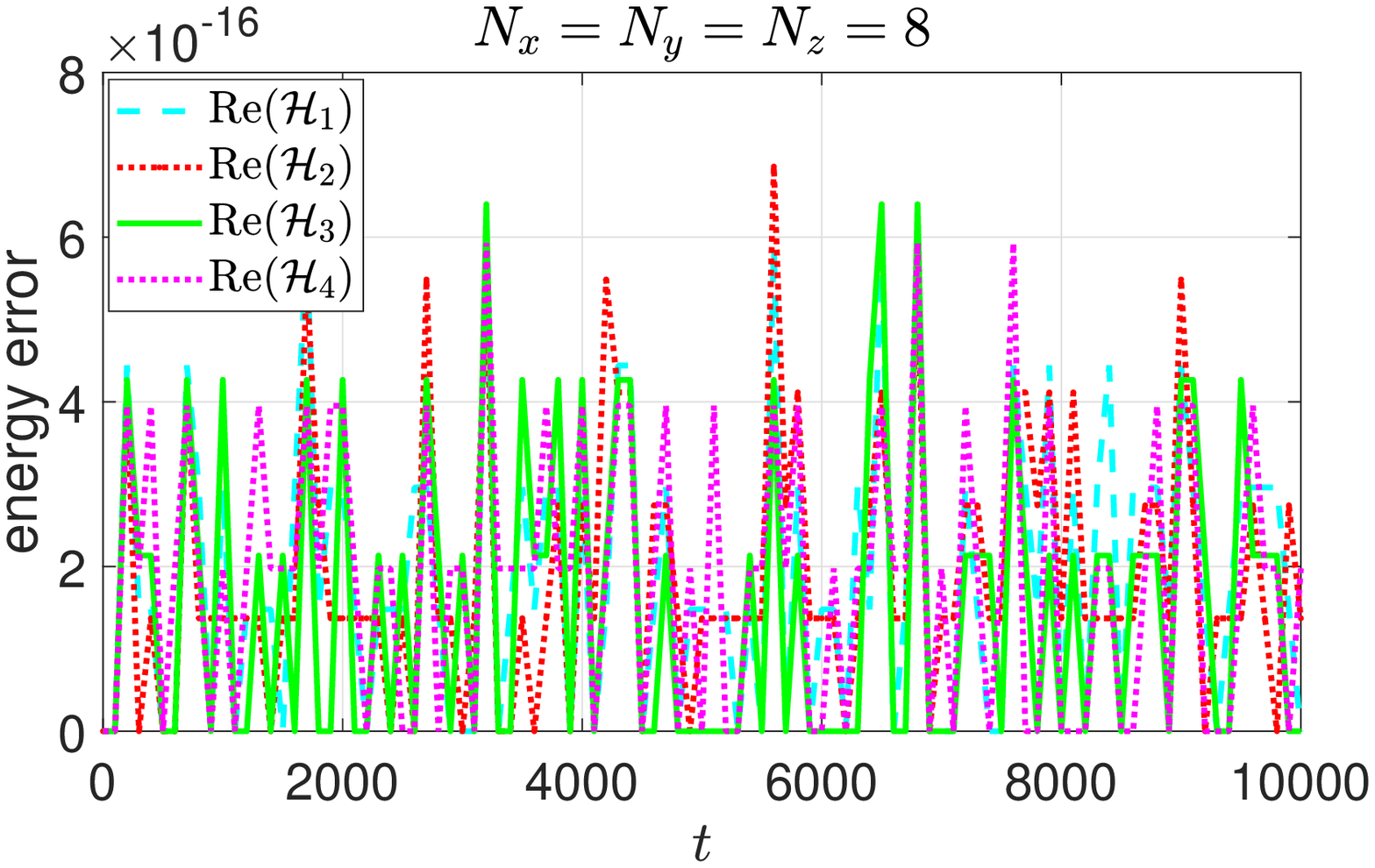,height=3.9cm,width=7.9cm}
\psfig{figure=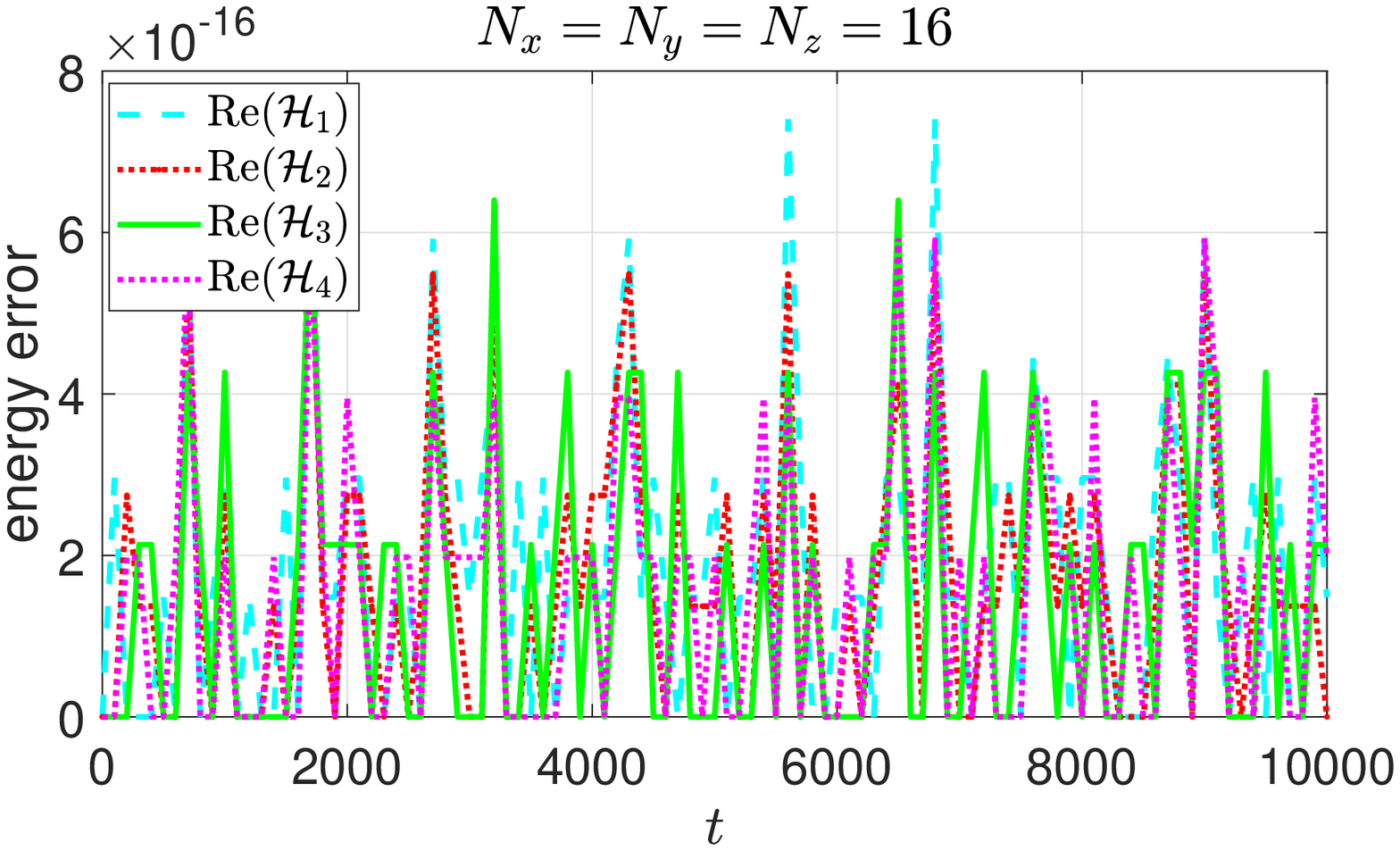,height=3.9cm,width=7.9cm}
\end{array}$$
\caption{Numerical energy conservations of the schemes  over long times  for Example 1.}\label{fig21}
\end{figure}

\begin{table}[tbp]
\caption{The  errors in the solution for Example 1.}\label{error s2}
\centering
%\fontsize{9}{9}\selectfont
\begin{threeparttable}
\begin{tabular}{lccccccc}
    \toprule
      \multicolumn{1}{c}{  }&   \multicolumn{3}{c}{$N_x=N_y=N_z=8$ }&\multicolumn{3}{c}{$N_x=N_y=N_z=16$ }\cr
    \cmidrule(lr){2-4} \cmidrule(lr){5-7}
    Time&$L_{\infty}$&$L_{2}$&CPU (s)&$L_{\infty}$&$L_{2}$&CPU (s)\cr
    \midrule
     $t_{end}=$1 &   2.8588e-13  & 3.6606e-14   &  0.055 & 7.0558e-12  & 3.3915e-13&    0.20\cr
     $t_{end}=$5 &   7.8246e-13  & 1.0241e-13   &  0.023   & 1.1084e-11  & 4.4399e-13&  0.16\cr
     $t_{end}=$10 &   1.4865e-12  & 1.9932e-13   & 0.013   &1.3474e-11  & 6.2014e-13&  0.16\cr
     $t_{end}=$15 &   2.6716e-12  & 3.9738e-13  &  0.015   & 2.1543e-11  & 1.1193e-12& 0.17\cr
     $t_{end}=$20 &   2.7995e-12  & 3.9228e-13  &  0.030    &2.5181e-11  & 1.2169e-12&  0.17\cr
    \bottomrule
    \end{tabular}
    \end{threeparttable}
\end{table}

\subsection{Traveling wave solutions}
The second  numerical example concerns  the
 Maxwell's equations
\eqref{maxsys} which have traveling wave solutions  ($\eps=\mu=1$)
({see} \cite{n5})
   \begin{equation*}%\label{traveling}
  \begin{aligned}&E_x=\cos(2\pi(x+y+z)-2\sqrt{3}\pi t),\  E_y=-2E_x,\   E_z=E_x,\ H_x=\sqrt{3}E_x, \  H_y=0,\  H_z=-\sqrt{3}E_x,
  \end{aligned}
\end{equation*}
where $t\in[0,t_{end}]$ and $\Omega=[0,1]^3$.

{Tables \ref{error e}-\ref{error D} display the relative
errors in discrete energy invariants,
  discrete  helicity and momentum invariants,  and discrete divergence-free field conservation, respectively.} A long term energy conservation is {indicated} in Figure  \ref{fig11}. The errors in the solution and the corresponding CPU time
are {listed} in Table \ref{error s}.  From the results,
it can be observed that our scheme provides a similar numerical
phenomena {to the first example.}

Finally,  some numerical
comparisons of the existing schemes are made in Table \ref{error sd}. We take some comparisons from \cite{n5}, where
the structure-preserving method \cite{n5}, the ADI-FDTD
method \cite{5}, and the energy conserved splitting FDTD (EC-S-FDTD) method  \cite{n14} are simulated with $N_x=N_y=N_z=32$. For comparison, our scheme (referred as TEIFP)
is implemented with $N_x=N_y=N_z=16$ and the results clearly show that our scheme has an  infinite-order accuracy, which is much better than the existing schemes with finite-order
accuracy.

The numerical results of these two tests highlight the favorable
behavior of {the  scheme presented in this paper.}  It
can be observed that in comparison with other existing
structure-preserving methods, the proposed scheme has very high
accuracy and  exact conservation laws, and requires very low
computing cost.

\begin{table}[tbp]
\caption{The relative errors in discrete energy invariants for Example 2.}\label{error e}
  \vskip0.1in \tabcolsep0.1in
\begin{tabular}{ l lllllll }
    \toprule
$\raisebox{-0.0ex}[0pt]{Spatial grid points}$   & $\raisebox{-0.0ex}[0pt]{Time}$   & $\textmd{Re}(\mathcal{E}_1)$  &  $\textmd{Re}(\mathcal{E}_2)$    & $\textmd{Re}(\mathcal{E}_3)$
&$\textmd{Re}(\mathcal{E}_4)$     \\
\hline
$N_x=N_y=N_z=8$
              &$t_{end}=$1 &   2.9605e-16  & 1.5998e-16      & 1.1998e-16  & 1.2967e-16   \\
              &$t_{end}=$5 &   2.9605e-16  & 0.0000e-16      & 1.1998e-16  & 2.5935e-16    \\
              &$t_{end}=$10 &   0.0000e-16 & 3.1996e-16      & 3.5996e-16  & 1.2967e-16   \\
              &$t_{end}=$15 &   1.4802e-16  & 1.5998e-16      & 1.1998e-16  & 0.0000e-16      \\
              &$t_{end}=$20 &   0.0000e-16  & 3.1996e-16      & 1.1998e-16  & 1.2967e-16      \\
              \hline
$N_x=N_y=N_z=16$
              &$t_{end}=$1 &   0.0000e-16  & 1.5998e-16      & 0.0000e-16  &  0.0000e-16   \\
              &$t_{end}=$5 &   2.9605e-16  & 0.0000e-16      & 2.3997e-16  &  0.0000e-16    \\
              &$t_{end}=$10 &   1.4802e-16 & 1.5998e-16      & 1.1998e-16  & 2.5935e-16   \\
              &$t_{end}=$15 &   2.9605e-16  & 0.0000e-16      & 2.3997e-16  & 1.2967e-16      \\
              &$t_{end}=$20 &   1.4802e-16  & 3.1996e-16      & 0.0000e-16  & 2.5935e-16      \\
    \bottomrule
\end{tabular}
\end{table}

\begin{table}[t!]
\caption{The relative errors in discrete  helicity and momentum invariants for Example 2.}\label{error o}
  \vskip0.1in \tabcolsep0.1in
\begin{tabular}{ l lllllll }
    \toprule
$\raisebox{-0.0ex}[0pt]{Spatial grid points}$   & $\raisebox{-0.0ex}[0pt]{Time}$   & $\textmd{Re}(\mathcal{H}_1)$  &  $\textmd{Re}(\mathcal{H}_2)$    & $\textmd{Re}(\mathcal{M}_1)$
&$\textmd{Re}(\mathcal{M}_2)$     \\
\hline
$N_x=N_y=N_z=8$
              &$t_{end}=$1 &   0.0000e-16  & 0.0000e-16      & 0.0000e-16  & 0.0000e-16   \\
              &$t_{end}=$5 &   0.0000e-16  & 0.0000e-16      & 0.0000e-16  & 0.0000e-16    \\
              &$t_{end}=$10 &   0.0000e-16 & 0.0000e-16      & 0.0000e-16  & 0.0000e-16   \\
              &$t_{end}=$15 &   7.0187e-12  & 8.3126e-10      & 2.9103e-16  & 1.1641e-10      \\
              &$t_{end}=$20 &   0.0000e-16  & 0.0000e-16      & 0.0000e-16  & 0.0000e-16      \\
              \hline
$N_x=N_y=N_z=16$
              &$t_{end}=$1 &   0.0000e-16  & 0.0000e-16      & 0.0000e-16  &  0.0000e-16   \\
              &$t_{end}=$5 &   0.0000e-16  &0.0000e-16      & 0.0000e-16  &  0.0000e-16    \\
              &$t_{end}=$10 &   3.5454e-9 & 1.9930e-7      & 3.7252e-9  & 3.7252e-9   \\
              &$t_{end}=$15 &   2.7610e-9  & 1.0640e-7      & 0.0000e-16  & 7.6798e-9      \\
              &$t_{end}=$20 &   5.1680e-9  & 1.7087e-7     & 3.7252e-9  & 8.3300e-9      \\
    \bottomrule
\end{tabular}
\end{table}

\begin{table}[tbp]
\caption{The  errors in  discrete divergence-free field conservation for Example 2.}\label{error D}
\centering
%\fontsize{10}{10}\selectfont
\begin{threeparttable}
\begin{tabular}{lccccc}
    \toprule
      \multicolumn{1}{c}{  }&   \multicolumn{2}{c}{$N_x=N_y=N_z=8$ }&\multicolumn{2}{c}{$N_x=N_y=N_z=16$ }\cr
    \cmidrule(lr){2-3} \cmidrule(lr){4-5}
    Time&$\textmd{Re}(\mathcal{D}_1)$&$\textmd{Re}(\mathcal{D}_2)$&$\textmd{Re}(\mathcal{D}_1)$&$\textmd{Re}(\mathcal{D}_2)$\cr
    \midrule
     $t_{end}=$1 &   0.0000e-16 &  0.0000e-16      &  0.0000e-16  & 0.0000e-16\cr
     $t_{end}=$5 &  0.0000e-16  &  0.0000e-16      &  0.0000e-16  & 0.0000e-16\cr
     $t_{end}=$10 &  0.0000e-16  &1.0048e-14      &  0.0000e-16  & 1.4210e-14\cr
     $t_{end}=$15 &  0.0000e-16  & 1.0048e-14      &  0.0000e-16  & 2.8421e-14\cr
     $t_{end}=$20 &   0.0000e-16  &0.0000e-16      &  0.0000e-16  & 1.4210e-14\cr
    \bottomrule
    \end{tabular}
    \end{threeparttable}
\end{table}

    \begin{figure}[tbp]
$$\begin{array}{cc}
\psfig{figure=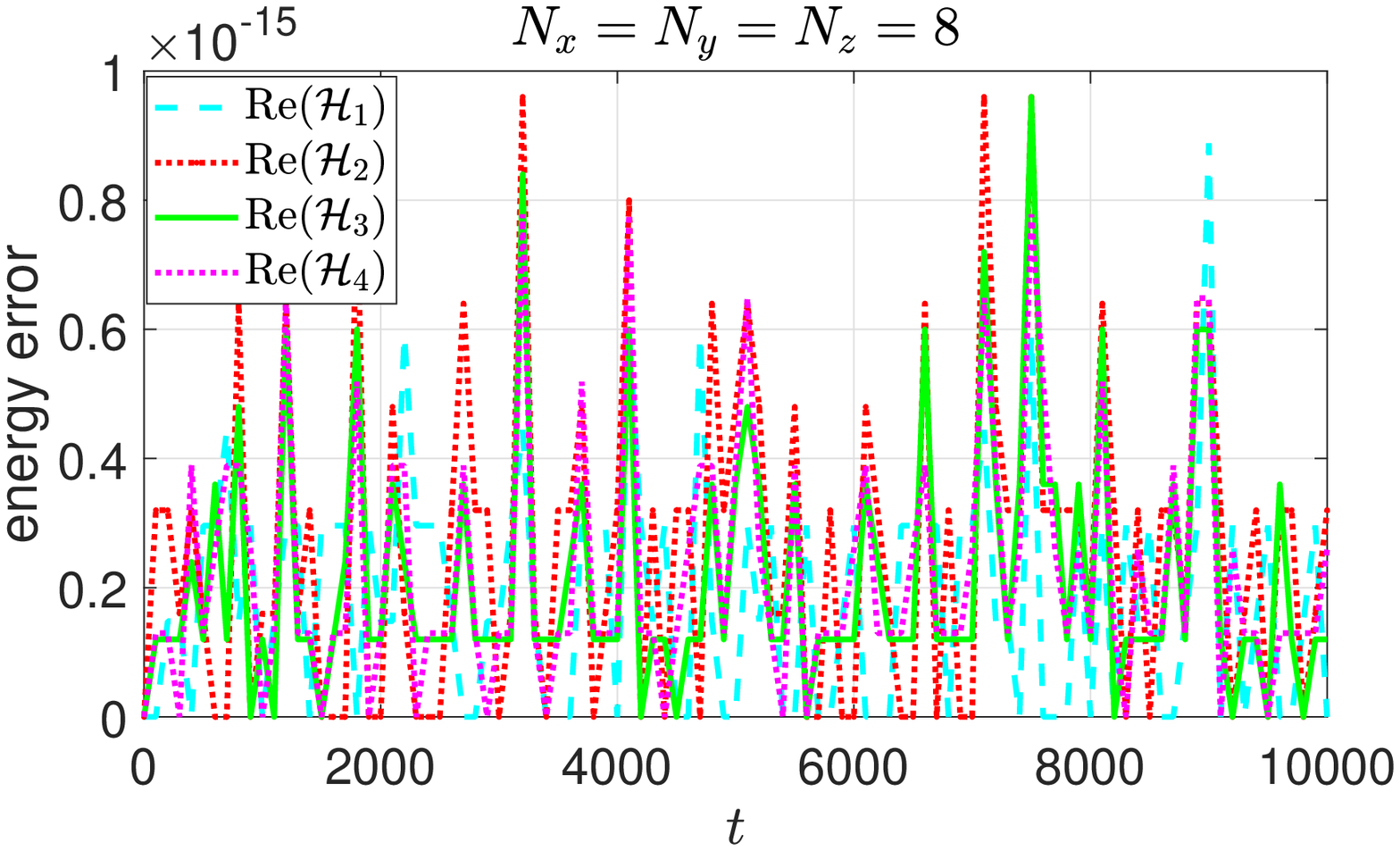,height=3.9cm,width=7.9cm}
\psfig{figure=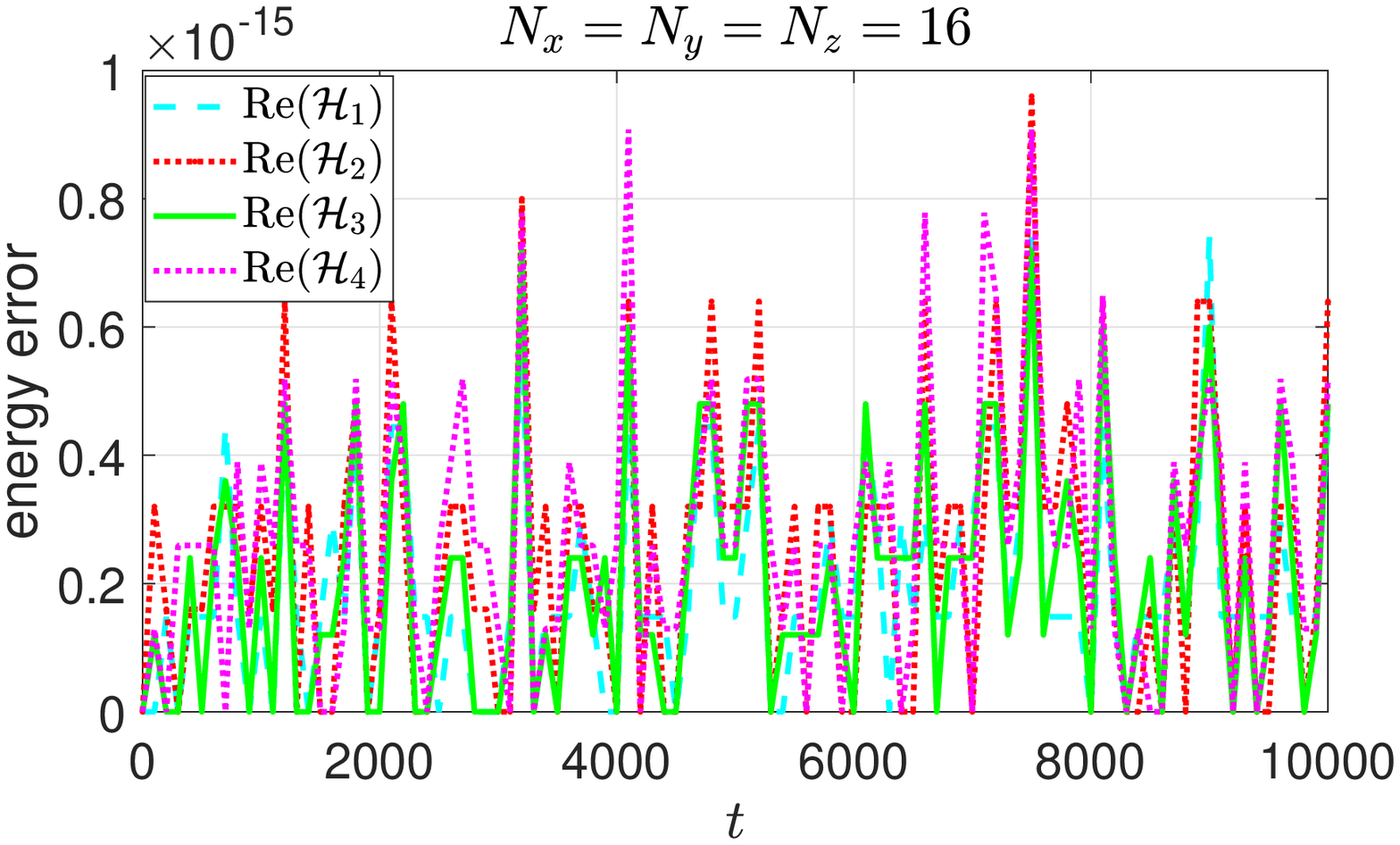,height=3.9cm,width=7.9cm}
\end{array}$$
\caption{Numerical energy conservations of the schemes  over long times for Example 2.}\label{fig11}
\end{figure}

\begin{table}[tbp]
\caption{The  errors in the solution for Example 2.}\label{error s}
\centering
%\fontsize{9}{9}\selectfont

%\fontsize{9}{9}\selectfont
\begin{threeparttable}
\begin{tabular}{lccccccc}
    \toprule
      \multicolumn{1}{c}{  }&   \multicolumn{3}{c}{$N_x=N_y=N_z=8$ }&\multicolumn{3}{c}{$N_x=N_y=N_z=16$ }\cr
    \cmidrule(lr){2-4} \cmidrule(lr){5-7}
    Time&$L_{\infty}$&$L_{2}$&CPU (s)&$L_{\infty}$&$L_{2}$&CPU (s)\cr
    \midrule
     $t_{end}=$1 &   1.6253e-12  & 1.5437e-13 &  0.034    & 3.5895e-10  & 7.4340e-12&  0.16 \cr
     $t_{end}=$5 &   6.6265e-12  & 4.2131e-13 &  0.013     & 3.3974e-10  & 6.7938e-12&   0.15 \cr
     $t_{end}=$10 &   1.4294e-11  & 8.5423e-13& 0.0055       & 3.9946e-10  & 7.6605e-12&  0.14 \cr
     $t_{end}=$15 &   3.0699e-12  & 4.6944e-13 &  0.0033     & 2.8697e-10  & 6.2506e-12&  0.14 \cr
     $t_{end}=$20 &   2.7620e-11  & 1.6616e-12 &  0.0061      & 3.2438e-10  & 7.6824e-12&  0.16 \cr
    \bottomrule
    \end{tabular}
    \end{threeparttable}
\end{table}

\begin{table}[tbp]
\caption{The  errors in the solution for different methods of Example 2.}\label{error sd}
\centering
\begin{threeparttable}
\begin{tabular}{lccccccc}
    \toprule
      \multicolumn{1}{c}{  }&  \multicolumn{2}{c}{$\triangle t=0.05$}
      &\multicolumn{2}{c}{$\triangle t=0.025$ }\cr
    \cmidrule(lr){2-3} \cmidrule(lr){4-5}
    Method&$L_{2}$&Order&$L_{2}$&Order\cr
    \midrule
     SAVF(2) &   3.74e-2  & 2.06 &  9.53e-2 & 1.97   \cr
    ADI-FDTD &   1.76e-1  & 1.79 & 4.53e-2& 1.95 \cr
   EC-S-FDTD &   1.41e-1  & 1.84& 3.62e-2& 1.96  \cr
    TEIFP  &       3.63e-13  & machine accuracy &  3.43e-13     & machine accuracy   \cr
    \bottomrule
    \end{tabular}
    \end{threeparttable}
\end{table}

\end{document}